\newif{\ifdraft}\drafttrue
\newif{\ifdrawFigures}\drawFigurestrue
\newif{\ifFullVersion}\FullVersiontrue
\newcommand{\thickhline}{%
    \noalign {\ifnum 0=`}\fi \hrule height 1pt
    \futurelet \reserved@a \@xhline
}
\newcolumntype{"}{@{\hskip\tabcolsep\vrule width 1pt\hskip\tabcolsep}}
\newtheorem{lemma*}{Lemma}
\theoremstyle{definition}
\renewcommand{\setminus}{\mysetminus}
\newcommand{\red}[1]{\operatorname{red}(#1)}
\newenvironment{aw}{\noindent\color{magenta} AW : }{}
 \newcommand{\prref}\prettyref
\newcommand{\mysetminusD}{\raisebox{.8pt}{\hbox{\tikz{\draw[line width=0.6pt,line cap=round] (3.5pt,0pt) -- (0,5.2pt);}}}}
\newcommand{\mysetminusT}{\mysetminusD}
\newcommand{\mysetminusS}{\raisebox{.5pt}{\hbox{\tikz{\draw[line width=0.45pt,line cap=round] (2.2pt,0) -- (0,3.8pt);}}}}
\newcommand{\mysetminusSS}{\raisebox{.35pt}{\hbox{\tikz{\draw[line width=0.4pt,line cap=round] (1.5pt,0) -- (0,2.8pt);}}}}
\newcommand{\mysetminus}{\mathbin{\mathchoice{\mysetminusD}{\mysetminusT}{\mysetminusS}{\mysetminusSS}}}
\newcommand{\supp}{\mathop\mathrm{supp}}
\newcommand{\set}[2]{\left\{\, \mathinner{#1}\vphantom{#2}\: \left|\: \vphantom{#1}\mathinner{#2} \right.\,\right\}}
\newcommand{\oneset}[1]{\left\{\, \mathinner{#1} \,\right\}}
\newcommand{\smallset}[1]{\left\{\mathinner{#1}\right\}}
\newcommand{\abs}[1]{\left|\mathinner{#1}\right|}
\newcommand{\N}{\ensuremath{\mathbb{N}}}
\newcommand{\Z}{\ensuremath{\mathbb{Z}}}
\newcommand{\NP}{\ensuremath{\mathsf{NP}}\xspace} %
\newcommand{\coNP}{\ensuremath{\mathsf{coNP}}\xspace}
\newcommand{\LOGCFL}{\ensuremath{\mathsf{LOGCFL}}\xspace} %
\newcommand{\DLOGTIME}{\ensuremath{\mathsf{DLOGTIME}}\xspace} %
\renewcommand{\L}{\ensuremath{\mathsf{L}}\xspace} %
\newcommand{\TC}{\ensuremath{\mathsf{uTC}^0}\xspace}
\newcommand{\Tc}[1]{\ensuremath{\mathsf{TC}^{#1}}\xspace}
\newcommand{\Ac}[1]{\ensuremath{\mathsf{AC}^{#1}}\xspace}
\newcommand{\Nc}[1]{\ensuremath{\mathsf{NC}^{#1}}\xspace}
\newcommand{\AC}{\ensuremath{\mathsf{uAC}^0}\xspace}
\newcommand{\NC}{\ensuremath{\mathsf{NC}}\xspace}
\renewcommand{\P}{\ensuremath{\mathsf{P}}\xspace}
\newcommand{\sign}{\mathop{\mathrm{sign}}}
\renewcommand{\phi}{\varphi}
\newcommand{\id}{\mathrm{id}}
\newcommand{\Sig}{\Sigma}
\newcommand{\Oh}{\mathcal{O}}
\newcommand{\cC}{\mathcal{C}}
\newcommand{\cS}{\mathcal{S}}
\newcommand\RAS[2]{\overset{#1}{\underset{#2}{\Longrightarrow}}}
\newcommand\DAS[2]{\overset{#1}{\underset{#2}{\Longleftrightarrow}}}
\newcommand\IRR{\mathop\mathrm{IRR}}
\newcommand{\sse}{\subseteq}
\newcommand\ie{i.e., }
\newcommand\Wlog{W.l.o.g.~}
\newcommand\eg{e.g.\xspace}
\newcommand{\pref}{\leq_{\text{pref}}}
\newcommand{\suff}{\leq_{\text{suff}}}
\newcommand{\spref}{<_{\text{pref}}}
\newcommand{\ssuff}{<_{\text{suff}}}
\newcommand{\y}{y}
\tikzset{
	ncbar angle/.initial=90,
	ncbar/.style={
		to path=(\tikztostart)
		-- ($(\tikztostart)!#1!\pgfkeysvalueof{/tikz/ncbar angle}:(\tikztotarget)$)
		-- ($(\tikztotarget)!($(\tikztostart)!#1!\pgfkeysvalueof{/tikz/ncbar angle}:(\tikztotarget)$)!\pgfkeysvalueof{/tikz/ncbar angle}:(\tikztostart)$)
		-- (\tikztotarget)
	},
	ncbar/.default=0.5cm,
}
\newcommand{\PowWP}{\textsc{PowerWP}}
\newcommand{\WP}{\textsc{WP}}
\newcommand{\MEM}{\textsc{Membership}}
\title{The power word problem} 
\author{Markus Lohrey}{Universit{\"a}t Siegen, Germany }{}{}{Funded by DFG project LO 748/12-1.}
\author{Armin Wei\ss
}{Universit{\"a}t Stuttgart, Germany}{}{}{Funded by DFG project DI 435/7-1.} 
\authorrunning{M. Lohrey and A. Wei\ss}
\keywords{word problem,	compressed word problem, free groups}
\begin{document}
	
	\maketitle
	
	\begin{abstract}
	In this work we introduce a new succinct variant of the word problem in a finitely generated group $G$, which we call the power word problem: the input word may contain powers $p^x$, where $p$ is a finite word
	over generators of $G$ and $x$ is a binary encoded integer. The power
	word problem is a restriction
	of the compressed word problem, where the input word is represented by 
	a straight-line program (i.e., an algebraic circuit over $G$). The main result of the paper
	states that the power word problem for a finitely generated free group $F$ is 
	\Ac{0}-Turing-reducible to the word problem for $F$.
	Moreover, the following hardness result is
	shown: For a wreath product $G \wr \mathbb{Z}$, where $G$ is either free of rank at least two
	or finite non-solvable, the power word problem is complete for \coNP. This contrasts with the situation where $G$ is abelian: then the power word problem is shown to be in \Tc0.
	\end{abstract}


  \section{Introduction}
  
  Algorithmic problems in group theory have a long tradition, going back to the work of Dehn from 1911 \cite{dehn11}.
  One of the fundamental group theoretic decision problems introduced by Dehn is the {\em word problem}
  for a finitely generated group $G$ (with a fixed finite generating set $\Sigma$): does a given word $w \in \Sigma^*$
  evaluate to the group identity? Novikov \cite{nov55} and Boone \cite{boone59} independently proved in the 1950's the existence 
  of finitely presented groups with undecidable word problem. On the positive side, in many important classes
  of groups the word problem is decidable, and in many cases also the computational complexity is quite low. 
  Famous examples are finitely generated linear groups, where the word problem can be solved in logarithmic
  space \cite{lz77} and hyperbolic groups where the word problem can be solved in linear time \cite{Ho} as well as in \LOGCFL
   \cite{Lo05ijfcs}. 
   
   In recent years, also compressed versions of group theoretical decision problems,
   where input words are represented in a succinct form, have attracted attention. One such succinct representation
   are so-called straight-line programs, which are context-free grammars that produce exactly one word. The size 
   of such a grammar can be much smaller than the word it produces. 
   For instance, the word $a^n$ can be produced by a straight-line program of size $\mathcal{O}(\log n)$.
   For the {\em compressed word problem} for the group $G$ the input consists of a straight-line program that produces a word $w$
   over the generators of $G$ and it is asked whether $w$ evaluates to the identity element of $G$. 
   This problem is a reformulation of the circuit evaluation problem for $G$. The compressed word problem 
   naturally appears when one tries to solve the word problem in automorphism groups or semidirect 
   products \cite[Section~4.2]{LohreySpringerbook2014}.
   For the following classes of groups, the compressed word problem is known 
   to be solvable in polynomial time: finite groups (where the compressed word problem is either \P-complete or in \Nc{2}
   \cite{BeMcPeTh97}), finitely generated nilpotent groups \cite{KoenigL17} (where the complexity is even in \Nc{2}),
   hyperbolic groups \cite{HoLoSchl19} (in particular, free groups), and  virtually special groups 
   (i.e, finite extensions of subgroups of right-angled Artin groups) \cite{LohreySpringerbook2014}.
   The latter class covers for instance Coxeter groups, one-relator groups with torsion, fully residually free groups and 
   fundamental groups of hyperbolic 3-manifolds. For finitely generated linear groups there is still a randomized polynomial
   time algorithm for the compressed word problem \cite{LohreyS07,LohreySpringerbook2014}.
   Simple examples of groups where the compressed word problem is intractable are wreath products $G \wr \mathbb{Z}$ with $G$ a non-abelian group: for every such group the 
   compressed word problem is \coNP-hard \cite{LohreySpringerbook2014} (this includes for instance Thompson's group $F$); on the other hand, if, in addition, $G$ is finite, then the (ordinary) word problem for $G \wr \mathbb{Z}$ is in $\Nc{1}$ \cite{Waack90}.
   
   In this paper, we study a natural variant of the compressed word problem, called the {\em power word problem}. 
   An input for the power word problem for the group $G$ is a tuple $(p_1, x_1, p_2, x_2, \ldots, p_n, x_n)$ where every $p_i$ 
   is a word over the group generators and every $x_i$ is a binary encoded integer (such a tuple is called a {\em power word}); 
   the question is whether 
   $p_1^{x_1} p_2^{x_2}\cdots p_n^{x_n}$ evaluates to the group identity of $G$. From a power word $(p_1, x_1, p_2, x_2, \ldots, p_n, x_n)$ 
   one can easily (\eg\ by an $\Ac{0}$-reduction) compute a straight-line program for the word 
   $p_1^{x_1} p_2^{x_2}\cdots p_n^{x_n}$. In this sense, the power word problem is at most as difficult as the compressed
   word problem. On the other hand, both power words and straight-line programs achieve exponential compression in the best case; so the additional difficulty of the the compressed word problem does not come from a higher compression rate but rather because straight-line programs can generate more ``complex'' words. 
   
   Our main results for the power word problem are the following; in each case we compare our results with
   the corresponding results for the compressed word problem:
   \begin{itemize}
   \item The power word problem for every finitely generated nilpotent group is in  \DLOGTIME-uniform $\Tc{0}$
   and hence has the same complexity as the (ordinary) word problem (or the problem of multiplying binary encoded integers). 
   The proof is a straightforward adaption 
   of a proof from \cite{MyasnikovW17}. There, the special case, where all $p_i$ in the input power word 
   $(p_1, x_1, p_2, x_2, \ldots, p_n, x_n)$ are single generators, was shown to be in \DLOGTIME-uniform $\Tc{0}$. 
   The compressed word problem for every finitely generated nilpotent group belongs to the 
   class $\mathsf{DET} \subseteq \Nc{2}$ and is hard for the counting class $\mathsf{C}_{=}\mathsf{L}$ in case
    of a torsion-free nilpotent group \cite{KoenigL17}.
   \item  The power word problem for a finitely generated group $G$ is \Nc{1}-many-one-reducible to the power word problem for any finite index subgroup of $G$. An analogous result holds for the compressed word problem as well \cite{KoenigL17}.
   \item The power word problem for a finitely generated free group is \Ac{0}-Turing-reducible to the word problem for 
   $F_2$ (the free group  of rank two) and therefore belongs to logspace. In contrast, it was shown in \cite{Lohrey06siam} that 
   the compressed word problem for a finitely generated free group
   of rank at least two is \P-complete.
   \item The power word problem for a wreath product $G \wr \mathbb{Z}$ with $G$ finitely generated abelian belongs to 
   \DLOGTIME-uniform $\Tc{0}$.
   For the compressed word problem for $G \wr \mathbb{Z}$ with $G$ finitely generated abelian only 
   the existence of a randomized polynomial time algorithm is known \cite{KonigL18}.
   \item The power word problem for the wreath products $F_2 \wr \mathbb{Z}$ and every wreath product 
   $G \wr \mathbb{Z}$, where $G$ is finite and non-solvable, is \coNP-complete. For these groups this sharpens the 
   corresponding \coNP-hardness result for the compressed word problem \cite{LohreySpringerbook2014}.
  \end{itemize}
\ifFullVersion\else
Due to space constraints we present only short outlines of the proofs for our main theorems; the proofs of all lemmas can be found in the appendix or in our full version on arXiv \cite{LohreyW19arxiv}.\todo{Hier oder wo anders? Paper auf Axiv Referenzieren?}
\fi

  \subparagraph*{Related work.}
  Implicitly, (variants of) the power word problem have been studied before.
  In the commutative setting, Ge \cite{Ge93} has shown that one can verify in polynomial time an identity 
  $\alpha_1^{x_1} \alpha_2^{x_2}\cdots \alpha_n^{x_n} = 1$, where the $\alpha_i$ are elements of an algebraic
  number field and the $x_i$ are binary encoded integers.
  
  Another problem related to the power word problem is the knapsack problem \cite{GanardiKLZ18,LohreyZ18,MyNiUs14} for a finitely generated group $G$
  (with generating set $\Sigma$): for a given sequence 
  of words $w, w_1, \ldots, w_n \in \Sigma^*$, the question is whether there exist $x_1, \ldots, x_n \in \mathbb{N}$ 
  such that $w = w_1^{x_1} \cdots w_n^{x_n}$ holds in $G$. For many groups $G$ one can show that 
  if such $x_1, \ldots, x_n \in \mathbb{N}$ exists, then there exist such numbers of size $2^{\text{poly}(N)}$, where
  $N = |w|+|w_1|+\cdots+|w_n|$ is the input length. This holds for instance for right-angled Artin groups (also known
  as graph groups). In this case, one nondeterministically guesses the binary encodings of numbers $x_1, \ldots, x_n$
  and then verifies, using an algorithm for the power word problem, whether $w_1^{x_1} \cdots w_n^{x_n} w^{-1} = 1$
  holds. In this way, it was shown in \cite{LohreyZ18} that for every right-angled Artin group the knapsack problem belongs to \NP
  (using the fact that the compressed word problem and hence the power word problem for a right-angled Artin group
  belongs to \P).
 
 In \cite{GurevichS07}, Gurevich and Schupp present a polynomial time 
 algorithm for a compressed form of the subgroup membership problem for a free group $F$,
 where group elements are represented in the form $a_1^{x_1} a_2^{x_2}\cdots a_n^{x_n}$ with binary encoded integers $x_i$
 The $a_i$ must be standard generators of the free group $F$. This is the same input representation as in 
 \cite{MyasnikovW17} and is more restrictive then our setting, where
 we allow powers of the form $w^x$ for $w$ an arbitrary word over the group generators (on the other hand, 
 Gurevich and Schupp consider the subgroup membership problem, which is more general than the word problem).

\section{Preliminaries}\label{sec:prelims} 
\ifFullVersion We denote intervals of integers with $[a,b] = \set{x \in \Z}{a \leq x \leq b}$.\fi
 
 \ifFullVersion \vspace{-1mm}\fi
\subparagraph*{Words.} An \emph{alphabet} is a (finite or infinite) set $\Sig$; an element $a \in \Sig$ is called a \emph{letter}. 
The free monoid over $\Sig$ is denoted by $\Sig^*$, its elements
are called {\em words}. The multiplication of the monoid is concatenation of words. The identity element is the empty word $1$. 
The length of a word $w$ is denoted by $\abs w$. 
If $w,p,x,q$ are words such that $w = pxq$, then we call $x$ a \emph{factor} of $w$, $p$ a \emph{prefix} of $w$, and $q$ a \emph{suffix} of $w$. 
We write $v \pref w$ (resp.\ $v \spref w$) if $v$ is a (strict) prefix of $w$ and $v \suff w$ (resp.\ $v \ssuff w$) if $v$ is a (strict) suffix of $w$.

\newcommand\SGr{\Sig^*}

\subparagraph*{String rewriting systems.}
Let $\Sig$ be an alphabet and $S \sse \SGr \times \SGr$ be a set of pairs, called a \emph{string rewriting system}.
 We write $\ell \to r$ if $(\ell, r) \in S$. This corresponding \emph{rewriting relation} $\smash{\RAS{}S} \vphantom{\DAS{}{}}$ over $\SGr$ is 
 defined by: $u \RAS{}S v$ if and only if there exist $\ell\to r\in S$ and words $s,t \in \Sig^*$ such that 
 $u = s\ell t$ and $v = sr t$. We also say that $u$ can be rewritten to $v$ in one step.
 We write $u \RAS{k}S v$ if $u$ can be rewritten to $v$ in exactly $k$ steps, \ie if there are $u_0, \dots, u_k$ with $u = u_0$, $v=u_k$ and $u_i\RAS{}Su_{i+1} $ for $0 \leq i \leq k-1$.
We denote the transitive closure of $\RAS{}S$ by $\RAS{+}S \;=\; \bigcup_{k\geq 1}\RAS{k}S$ and the reflexive and transitive closure by $\RAS{*}S\;=\; \bigcup_{k\geq 0}\RAS{k}S$. Moreover $\DAS*S$ is the reflexive, transitive, and symmetric closure of  $\RAS{}S$; it is the smallest congruence containing $S$. The set of \emph{irreducible word} with respect to $S$
is 
$\IRR(S) = \{w \in \Sigma^* \mid \text{there is no } v \text{ with } w \RAS{}S v\}$.

\subparagraph*{Free groups.}

Let $X$ be a set and $X^{-1} = \set{a^{-1}}{a \in X}$ be a disjoint copy of $X$.
We extend the mapping $a \mapsto a^{-1}$ 
 to an involution without fixed points on $\Sig = X \cup X^{-1}$ by $(a^{-1})^{-1} = a$
 and finally to an involution without fixed points on $\Sig^*$ by $(a_1 a_2 \cdots a_n)^{-1} = 
 a_n^{-1} \cdots a_2^{-1} a_1^{-1}$. For an integer $z < 0$ and $w \in \Sig^*$ we 
 write $w^z$ for $(w^{-1})^{-z}$.
 The string rewriting system \ifFullVersion
 \[S= \set{a a^{-1} \to 1}{a \in \Sig}\]
 \else
  $S= \set{a a^{-1} \to 1}{a \in \Sig}$
 \fi
is strongly confluent and terminating meaning that for every word $w \in \Sig^*$ there exists
a unique word $\red{w} \in \IRR(S)$ with $w \RAS*S \red{w}$ (for precise definitions see \eg\ \cite{bo93springer,jan88eatcs}).
Words from $\IRR(S)$ are called \emph{freely reduced}.
The system $S$ defines the free group $F_{X} = \Sig^*/ S$ with basis $X$.
More concretely, elements of $F_X$ can be identified with freely reduced normal forms, and 
the group product of $u, v \in \IRR(S)$ is defined by $\red{uv}$. With this definition 
$\mathrm{red}: \Sig^* \to F_X$ becomes a monoid homomorphism that commutes with the 
involution ${\cdot}^{-1}$: $\red{w}^{-1} = \red{w^{-1}}$ for all words $w \in \Sig^*$.
If $|X|=2$ then we write $F_2$ for $F_X$. It is known that for every countable set $X$,
$F_2$ contains an isomorphic copy of $F_X$.

\subparagraph*{Finitely generated groups and the power word problem.}

A group $G$ is called {\em finitely generated} if there exist a finite a finite set $X$ and 
a surjective group homomorphism $h : F_X \to G$. In this situation, the set 
$\Sig = X \cup X^{-1}$ is called a finite (symmetric) generating set for $G$.
\ifFullVersion In many cases we can think of $\Sigma$ as a subset of $G$, but, in general, 
we can also have more than one letters for the same group element. 
The group identity of $G$ is denoted with $1$ as well
  (this fits to our notation $1$
for the empty word which is the identity of $F_X$).\fi
For words $u,v \in \Sigma^*$  we usually say that $u = v$ in $G$ or $u =_G v$ in case $h(\red{u}) = h(\red{v})$. 
The {\em word problem} for the finitely generated group $G$, $\WP(G)$ for short, is defined as follows:
\begin{itemize}
\item input: a word $w \in \Sigma^*$.
\item question: does $w=_G 1$ hold? 
\end{itemize}
A \emph{power word} (over $\Sigma$) is a tuple $(p_1,x_1,p_2,x_2,\ldots,p_n,x_n)$ where 
$p_1, \dots, p_n \in \Sigma^*$ are words over the group generators (called the periods of the power word)
and $x_1, \dots, x_n\in \Z$ are integers that are given in binary notation. Such a power word represents the 
word $p_1^{x_1} p_2^{x_2}\cdots p_n^{x_n}$. Quite often, we will identify the power word $(p_1,x_1,p_2,x_2,\ldots,p_n,x_n)$
with the word $p_1^{x_1} p_2^{x_2}\cdots p_n^{x_n}$. Moreover, if $x_i=1$, then we usually omit the exponent $1$ in a power word. 
The \emph{power word problem} for the finitely generated group $G$, $\PowWP(G)$ for short, is defined as follows:
\begin{itemize}
\item input: a power word $(p_1,x_1,p_2,x_2,\ldots,p_n,x_n)$.
\item question: does $p_1^{x_1} p_2^{x_2}\cdots p_n^{x_n}=_G 1$ hold?
\end{itemize}
Due to the binary encoded exponents, a power word can be seen as a succinct description 
of an ordinary word. Hence, a priori, the power word problem for a group $G$ could be computationally
more difficult than the word problem. We will see examples of groups $G$, where 
$\PowWP(G)$ is indeed more difficult than $\WP(G)$ (under standard assumptions from complexity theory),
as well as examples of groups $G$, where $\PowWP(G)$ and $\WP(G)$ are equally difficult.

\subparagraph*{Wreath products.}

Let $G$ and $H$ be groups. Consider the direct sum $K = \bigoplus_{h
  \in H} G_h$, where $G_h$ is a copy of $G$. We view $K$ as the set $G^{(H)}$ of
all mappings $f\colon H\to G$ such that $\supp(f) := \{h\in H \mid f(h)\ne
1\}$ is finite, together with pointwise multiplication as the group
operation.  The set $\supp(f)\subseteq H$ is called the
\emph{support} of $f$. The group $H$ has a natural left action on
$G^{(H)}$ given by $h f(a) = f(h^{-1}a)$, where $f \in G^{(H)}$ and
$h, a \in H$.  The corresponding semidirect product $G^{(H)} \rtimes
H$ is the (restricted) \emph{wreath product} $G \wr H$.  In other words:
\begin{itemize}
\item
Elements of $G \wr H$ are pairs $(f,h)$, where $h \in H$ and
$f \in G^{(H)}$.
\item
The multiplication in $G \wr H$ is defined as follows:
Let $(f_1,h_1), (f_2,h_2) \in G \wr H$. Then
$(f_1,h_1)(f_2,h_2) = (f, h_1h_2)$, where
$f(a) = f_1(a)f_2(h_1^{-1}a)$.
\end{itemize}
\ifFullVersion
The following intuition might be helpful:
An element $(f,h) \in G\wr H$ can be thought of
as a finite multiset of elements of $G \setminus\{1_G\}$ that are sitting at certain
elements of $H$ (the mapping $f$) together with the distinguished
element $h \in H$, which can be thought of as a cursor
moving in $H$.
If we want to compute the product $(f_1,h_1) (f_2,h_2)$, we do this
as follows: First, we shift the finite collection of $G$-elements that
corresponds to the mapping $f_2$ by $h_1$: If the element $g \in G\setminus\{1_G\}$ is
sitting at $a \in H$ (i.e., $f_2(a)=g$), then we remove $g$ from $a$ and
put it to the new location $h_1a \in H$. This new collection
corresponds to the mapping $f'_2 \colon  a \mapsto f_2(h_1^{-1}a)$.
After this shift, we multiply the two collections of $G$-elements
pointwise: If in $a \in H$ the elements $g_1$ and $g_2$ are sitting
(i.e., $f_1(a)=g_1$ and $f'_2(a)=g_2$), then we put the product
$g_1g_2$ into the location $a$. Finally, the new distinguished
$H$-element (the new cursor position) becomes $h_1 h_2$.
\fi

\subparagraph*{Complexity.}

\ifFullVersion
We assume that the reader is familiar with the complexity classes {\sf P}, \NP, and \coNP; see e.g.\ \cite{AroBar09} for details. Let $\cC$ be any complexity class and $K\sse \Delta^*$, $L \sse \Sigma^*$ languages. Then $L$ is $\cC$-many-one reducible to $K$ ($L\leq_{\mathrm{m}}^\cC  K$) if there exists a $\cC$-computable function $f: \Sigma^* \to \Delta^*$ with $x\in L $ if and only if $f(x) \in K$.

We use circuit complexity for classes below deterministic logspace (\L for short).
Instead of defining these classes directly, we introduce the slightly more general notion of \Ac0-Turing reducibility.
A language $L \subseteq \{0,1\}^*$  is \Ac0-Turing-reducible to $K \sse \{0,1\}^*$ if there is a family of constant-depth, polynomial-size Boolean circuits with oracle gates for $K$ deciding $L$. More precisely, we can define the class of language $\Ac0(K)$ which are \Ac0-Turing-reducible to $K \sse \{0,1\}^*$: a language $L \subseteq \{0,1\}^*$ belongs to $\Ac0(K)$ if there exists a family $(C_n)_{n \geq 0}$ of Boolean circuits with the following 
properties:
\begin{itemize}
	\item $C_n$ has $n$ distinguished input gates $x_1, \ldots, x_n$ and a distinguished output gate $o$.
	\item $C_n$ accepts exactly the words from $L \cap \{0,1\}^n$, i.e., if the input gate $x_i$ receives the input $a_i \in \{0,1\}$, then
	the output gate $o$ evaluates to $1$ if and only if $a_1 a_2 \cdots a_n \in L$.
	\item Every circuit $C_n$ is built up from input gates, not-gates, and-gates, or-gates, and oracle gates for $K$ (which output $1$ if and only if their input is in $K$).
	\item All gates have unbounded fan-in, \ie there is no bound on the number of input wires for a gate.
	\item There is a polynomial $p(n)$ such that $C_n$ has at most $p(n)$ many gates and wires.
	\item There is a constant $d$ such that every $C_n$ has depth at most $d$, where the depth is the length of a longest path
	from an input gate $x_i$ to the output gate $o$.
\end{itemize}
This is in fact the definition of non-uniform $\Ac0(K)$. Here ``non-uniform'' means that the mapping $n \mapsto C_n$ is 
not restricted in any way. In particular, it can be non-computable. For algorithmic purposes one usually adds some uniformity
requirement to the above definition. The most ``uniform'' version of $\Ac0(K)$ is \DLOGTIME-uniform $\Ac0(K)$. For this,
one encodes the gates of each circuit $C_n$ by bit strings of length $\mathcal{O}(\log n)$. Then the circuit family $(C_n)_{n \geq 0}$
is called \emph{\DLOGTIME-uniform}  if (i) there exists a deterministic Turing machine that computes for a given gate $u \in \{0,1\}^*$ 
of $C_n$ ($|u| \in \mathcal{O}(\log n)$) in time $\mathcal{O}(\log n)$ the type (of gate $u$, where the types are $x_1, \ldots, x_n$, not, and, or, oracle gates)
and (ii) there exists a deterministic Turing machine that decides for two given gate $u,v \in \{0,1\}^*$ 
of $C_n$ ($|u|, |v| \in \mathcal{O}(\log n)$) in time $\mathcal{O}(\log n)$ whether there is a wire from gate $u$ to gate $v$.
In the following, we write $\AC(K)$ for \DLOGTIME-uniform $\Ac0(K)$.

If the language $L$ in the above definition of $\AC(K)$ is defined over a non-binary alphabet $\Sigma$, then one first has to fix a binary 
encoding of words over $\Sigma$.

The class \Nc1 is defined as the class of languages accepted by boolean circuits of bounded fan-in and logarithmic depth. As a consequence of Barrington's theorem \cite{Barrington86}, we have $\Nc1 = \AC(\WP(A_5))$ where $A_5$ is the alternating group over 5 elements \cite[Corollary 4.54]{Vollmer99}. Moreover, the word problem for any finite group $G$ is in $\Nc1$; if $G$ is non-solvable, its word problem is $\Nc1$-complete~-- even under $\AC$-many-one reductions. Robinson proved that the word problem for the free group $F_2$ is \Nc1-hard  \cite{Robinson93phd}, i.e., $\Nc1 \subseteq \AC(\WP(F_2))$.

The class $\TC$ is defined as $\AC(\text{\textsc{Majority}})$ where \textsc{Majority} is the problem to determine whether the input contains more $1$s than $0$s. When talking about hardness for \TC or \Nc1 we use \AC-Turing reductions unless stated otherwise.
Important problems that are complete for \TC are:
\begin{itemize}
	\item The languages $\{ w \in \{0,1\}^* \mid |w|_0 \leq |w|_1 \}$ and $\{ w \in \{0,1\}^* \mid |w|_0 = |w|_1 \}$, where $|w|_a$ denotes
	the number of occurrences of $a$ in $w$, see e.g. \cite{Vollmer99}.
	\item The computation (of a certain bit) of the binary representation of the product of two (or any number of) 
	binary encoded integers \cite{HeAlBa02}.
	\item The computation (of a certain bit) of the binary representation of the integer quotient of two binary encoded integers \cite{HeAlBa02}.
	\item The word problem for every infinite solvable linear group \cite{KoenigL17}.
	\item The conjugacy problem for the Baumslag-Solitar group  $\mathsf{BS}(1,2)$  \cite{DiekertMW14}.
\end{itemize}

\else

We assume that the reader is familiar with the complexity classes {\sf P}, \NP, and \coNP and many-one reductions; see e.g.\ \cite{AroBar09} for details. 
We use circuit complexity for classes below deterministic logspace (\L for short).

A language $L \subseteq \{0,1\}^*$  is \Ac0-Turing-reducible to $K \sse \{0,1\}^*$ if there is a family of constant-depth, polynomial-size Boolean circuits with oracle gates for $K$ deciding $L$. More precisely, $L \subseteq \{0,1\}^*$ belongs to $\Ac0(K)$ if there exists a family $(C_n)_{n \geq 0}$ of circuits which, apart from the input gates $x_1, \ldots, x_n$ are built up from \emph{not}, \emph{and}, \emph{or}, and \emph{oracle gates} for $K$ (which output $1$ if and only if their input is in $K$).
All gates may have unbounded fan-in, but there is a polynomial bound on the number of gates and wires and a constant bound on the depth (length of a longest path from an input gate $x_i$ to the output gate $o$).
 Finally, $C_n$ accepts exactly the words from $L \cap \{0,1\}^n$, i.e., if each input gate $x_i$ receives the input $a_i \in \{0,1\}$, then a
 distinguished output gate evaluates to $1$ if and only if $a_1 a_2 \cdots a_n \in L$.

In the following, we only consider \DLOGTIME-uniform $\Ac0(K)$ for which we write $\AC(K)$.
\DLOGTIME-uniform means that there is a deterministic Turing machine which decides in time $\Oh(\log n)$ on input of two gate numbers (given in binary) and the string $1^n$ whether there is a wire between the two gates in the $n$-input circuit and also computes the type of a given gates.
For more details on these definitions we refer to \cite{Vollmer99}. 
%
%
If the languages $K$ and $L$ in the above definition of $\AC(K)$ are defined over a non-binary alphabet $\Sigma$, then one first has to fix a binary 
encoding of words over $\Sigma$. 

The class $\TC$ is defined as $\AC(\text{\textsc{Majority}})$ where \textsc{Majority} is the problem to determine whether the input contains more $1$s than $0$s. 
The class \Nc1 is the class of languages accepted by Boolean circuits of bounded fan-in and logarithmic depth. 
When talking about hardness for \TC or \Nc1 we use \AC-Turing reductions unless stated otherwise.
As a consequence of Barrington's theorem \cite{Barrington86}, we have $\Nc1 = \AC(\WP(A_5))$ where $A_5$ is the alternating group over 5 elements \cite[Corollary 4.54]{Vollmer99}.
Moreover, the word problem for any finite group $G$ is in $\Nc1$.
Robinson proved that the word problem for the free group $F_2$ is \Nc1-hard  \cite{Robinson93phd}, i.e., $\Nc1 \subseteq \AC(\WP(F_2))$.

\fi

\ifFullVersion
Let $A, B_1, \ldots, B_k \subseteq \{0,1\}^*$ be languages. We say that $A$ is {\em conjunctive truth-table \TC-reducible} to $B_1, \ldots, B_k$
if there exists a \TC-computable function $f$ that computes from a given input word $w \in \{0,1\}^*$ a finite list
$w_1, i_1, w_2, i_2, \ldots, w_d, i_d$ with $w_1, \ldots, w_d \in \{0,1\}^*$ and $i_1, \ldots, i_d \in \{1, \ldots, k\}$ such that
$w \in A$ if and only if $\bigwedge_{1 \leq j \leq d} w_j \in B_{i_j}$.
We need the following obvious fact (for which conjunctive truth-table polynomial time reducibility would suffice):

\begin{lemma} \label{lem:conjunctive-truth-table}
If $A$ is conjunctive truth-table \TC-reducible to $B_1, \ldots, B_k$ and $B_1, \ldots, B_k$ belong to \coNP then also
$A$ belongs to \coNP.
\end{lemma}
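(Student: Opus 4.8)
The plan is to pass to complements: I will show $\overline{A} := \{0,1\}^* \setminus A \in \NP$, which is equivalent to $A \in \coNP$. Two elementary facts are needed. First, a \TC-computable function is computable in polynomial time and has output of polynomially bounded length, since the defining circuit family has polynomially many gates and wires; in fact this is the only property of the reduction I will use, so uniformity plays no role. Second, since each $B_i \in \coNP$, its complement $\overline{B_i}$ lies in \NP, so there is a polynomial-time verifier $V_i$ and a polynomial $q_i$ with $u \in \overline{B_i} \iff \exists\, c \in \{0,1\}^{q_i(|u|)} \colon V_i(u,c) = 1$.

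Next I would unfold the reduction. Let $f$ be the conjunctive truth-table \TC-reduction from $A$ to $B_1, \ldots, B_k$, so that on input $w$ it outputs (an encoding of) a list $w_1, i_1, \ldots, w_d, i_d$, where $d$ and all $|w_j|$ are bounded by a fixed polynomial in $|w|$, and
\[ w \in A \quad\Longleftrightarrow\quad \bigwedge_{1 \le j \le d} w_j \in B_{i_j}. \]
Negating both sides yields $w \in \overline{A} \iff \bigvee_{1 \le j \le d}\, w_j \in \overline{B_{i_j}}$.

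This immediately gives an \NP-procedure for $\overline{A}$: on input $w$, compute $f(w)$ in polynomial time and parse the list; nondeterministically guess an index $j \in \{1, \ldots, d\}$ together with a certificate $c \in \{0,1\}^{q_{i_j}(|w_j|)}$; accept iff $V_{i_j}(w_j, c) = 1$. By the displayed equivalence this machine has an accepting run precisely when $w \in \overline{A}$, and it is polynomial-time because $|f(w)|$, $d$, the $|w_j|$ and the guessed certificate are all polynomially bounded in $|w|$ and each $V_i$ runs in polynomial time. Hence $\overline{A} \in \NP$ and $A \in \coNP$.

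There is no genuine obstacle here — the statement is, as claimed, immediate; the only point deserving a line of care is verifying that the number of queries $d$, the query lengths $|w_j|$, and hence the certificate lengths stay polynomial in $|w|$, which follows directly from the polynomial size bound on \TC-circuits. (The same argument shows, more generally, that \coNP is closed under polynomial-time conjunctive truth-table reductions, so the hypothesis ``\TC'' could in fact be weakened to ``polynomial time''.)
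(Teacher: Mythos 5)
Your proof is correct, and it is precisely the argument the paper leaves implicit: the lemma is stated there as an ``obvious fact'' without proof, with a remark that polynomial-time conjunctive truth-table reducibility would suffice, which matches your closing observation. One tiny caveat: your aside that ``uniformity plays no role'' is slightly off, since polynomial size alone does not make the reduction $f$ computable --- it is the (\DLOGTIME-)uniformity built into the paper's definition of \TC that guarantees $f$ is polynomial-time computable, which is exactly the property your \NP-machine for $\overline{A}$ needs.
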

\fi

\section{Results}\label{sec:results}

In this section we state our (and proof the easy) results on the power word problem. 
\ifFullVersion 
	The proofs of \prettyref{thm:free_power_wp}, \ref{thm:tc0-wreath} and \ref{thm:wreath-coNP} can be found in Sections \ref{sec:proof_free}, \ref{sec:proof-tc0-wreath}, and \ref{sec:proof-wreath-coNP} respectively.
\else
	Outlines of the proofs of \prettyref{thm:free_power_wp}, \ref{thm:tc0-wreath} and \ref{thm:wreath-coNP} can be found in Sections \ref{sec:proof_free} and \ref{sec:proof-wreath}, respectively.
\fi

\begin{theorem}\label{thm:nilpotent_power_wp}
If $G$ is a finitely generated nilpotent group, then $\PowWP(G)$ is in \TC.
\end{theorem}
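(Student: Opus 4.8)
The plan is to reduce the power word problem for a finitely generated nilpotent group $G$ to integer arithmetic, building on the technique from \cite{MyasnikovW17}. First I would pass to a convenient presentation: every finitely generated nilpotent group $G$ is virtually torsion-free nilpotent, and by the $\Nc1$-reduction to finite-index subgroups (stated as the second bullet in the introduction, from \cite{KoenigL17}), it suffices to treat the case where $G$ is torsion-free nilpotent. Such a $G$ admits a \emph{Mal'cev basis} $g_1, \dots, g_k$, so that every element has a unique normal form $g_1^{e_1} \cdots g_k^{e_k}$ with $e_i \in \Z$, and the group operations (multiplication, inversion, and in particular the collection process) are given by polynomial functions of the coordinates $e_i$ with rational coefficients of bounded degree. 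Crucially, computing the $N$-th power $(g_1^{e_1}\cdots g_k^{e_k})^N$ is also given by a fixed polynomial in $e_1, \dots, e_k$ and $N$ (this is where the Hall--Petrescu / Mal'cev polynomial machinery enters): the coordinates of the $N$-th power are polynomials in the $e_i$ and in $\binom{N}{1}, \binom{N}{2}, \dots, \binom{N}{c}$, where $c$ is the nilpotency class.

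Given a power word $(p_1, x_1, \dots, p_n, x_n)$, the first step is to compute, for each period $p_i$, the Mal'cev coordinates of $p_i$ as an element of $G$; since each $|p_i|$ is bounded by the input length and the group operations are $\TC$-computable polynomial evaluations, this is in $\TC$ (iterated multiplication of a bounded-class nilpotent group reduces to iterated integer multiplication/addition, which is in $\TC$). Next, for each $i$, I compute the Mal'cev coordinates of $p_i^{x_i}$ by plugging $x_i$ and the coordinates of $p_i$ into the fixed power-polynomial; here the binary-encoded exponent $x_i$ enters only through the binomial coefficients $\binom{x_i}{j}$ for $j \le c$, each of which is a polynomial of degree $\le c$ in $x_i$ and hence computable in $\TC$ from the binary encoding of $x_i$. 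Finally I compute the coordinates of the product $p_1^{x_1} \cdots p_n^{x_n}$ by iterated multiplication in $G$ — again a $\TC$ iterated-arithmetic computation — and accept iff all coordinates are $0$. Each stage is a $\TC$ computation and $\TC$ is closed under composition, so the whole algorithm is in $\TC$ (i.e., \DLOGTIME-uniform $\Tc0$).

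The main obstacle, and the point requiring the most care, is the bit-size control throughout the iterated arithmetic: the intermediate Mal'cev coordinates can be integers of polynomial bit-length (the exponents $x_i$ are binary-encoded, so coordinates of $p_i^{x_i}$ have bit-length polynomial in the input, and this only grows polynomially under the further $n$-fold product), and one must verify that iterated multiplication and iterated addition of a polynomial-length list of such integers stays in $\TC$ — this is exactly the content of the $\TC$-completeness of iterated integer multiplication \cite{HeAlBa02}, together with the fact that the Mal'cev polynomials have bounded degree and constant-sized coefficient denominators (which can be cleared once and for all depending only on $G$). Modulo this bookkeeping, the argument is a direct adaptation of \cite{MyasnikovW17}, the only new ingredient being that the periods $p_i$ are arbitrary words rather than single generators, which is absorbed by the initial $\TC$ step that reduces each $p_i$ to its Mal'cev coordinates.
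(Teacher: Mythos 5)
Your core computation is exactly the paper's proof: compute Mal'cev coordinates of each period $p_i$ in \TC, apply the fixed power polynomials (whose dependence on the binary exponent is through the binomial coefficients $\binom{x_i}{j}$ for $j$ up to the nilpotency class) to obtain coordinates of $p_i^{x_i}$, and then evaluate the $n$-fold product by iterated \TC arithmetic with polynomial bit-length control; the paper cites \cite{MyasnikovW17} for precisely these three steps (Mal'cev normal forms, power polynomials, and the final normal form computation).

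The gap is in your first step, the reduction to the torsion-free case. You invoke the $\Nc{1}$-many-one reduction of $\PowWP(G)$ to $\PowWP(H)$ for a torsion-free finite-index subgroup $H$ (this is \prettyref{thm:finite_index} of the present paper, not a result of \cite{KoenigL17}, which concerns the compressed word problem). But composing an $\Nc{1}$-computable reduction with a \TC algorithm for $\PowWP(H)$ only places $\PowWP(G)$ in $\Nc{1}$: \TC is not known to be closed under $\Nc{1}$-many-one reductions, since the composed circuit has logarithmic depth. So as written your argument establishes the claimed \TC bound only for torsion-free $G$ and yields the weaker bound $\Nc{1}$ for general finitely generated nilpotent groups, which falls short of the statement. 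The detour is also unnecessary: the machinery of \cite{MyasnikovW17} is developed for arbitrary finitely generated nilpotent groups, using polycyclic/Mal'cev coordinate systems in which some coordinates range over finite cyclic factors, and both the normal form computation and the power polynomials with binary exponents are available there in \TC. Either use that directly (as the paper does), or you would additionally have to show that for nilpotent $G$ the finite-index reduction itself can be carried out in \TC~-- which your proposal does not do, and which would essentially require the same coordinate machinery anyway.
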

\begin{proof}
	In \cite{MyasnikovW17}, the so-called word problem with binary exponents was shown to be in \TC. We can apply the same techniques as in  \cite{MyasnikovW17}: we compute Mal'cev normal forms of all $p_i$ \cite[Theorem 5]{MyasnikovW17}, then use the power polynomials from \cite[Lemma 2]{MyasnikovW17} to compute Mal'cev normal forms with binary exponents of all $p_i^{x_i}$. Finally, we compute the Mal'cev normal form of $p_1^{x_1} \cdots p_n^{x_n}$ again using \cite{MyasnikovW17}.
\end{proof}

\begin{theorem}\label{thm:free_power_wp}
	The power word problem for a finitely generated free group is \Ac{0}-Turing-reducible to the word problem for the free group  $F_2$.
\end{theorem}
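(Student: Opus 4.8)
The plan is to transform the input power word into a short normal form from which triviality can be read off, using only polynomially many parallel oracle calls on words of polynomial length together with $\TC$-computations on the binary exponents; since $\TC \sse \Nc{1} \sse \AC(\WP(F_2))$, and since $F_2$ contains a copy of every finitely generated free group $F$ via a fixed word map (under which $\WP(F)$ trivially $\Ac{0}$-reduces to $\WP(F_2)$), it suffices to build an $\Ac{0}$-Turing reduction from $\PowWP(F)$ to $\WP(F)$ for $F = F_X$ with $X$ a fixed basis.

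First I would normalise the periods: freely reduce each $p_i$ — a single word over the constant-size alphabet $\Sigma$, which is possible in $\Nc{1}$, or directly with the help of the oracle — and write the result as $p_i =_F z_i c_i z_i^{-1}$ with $c_i$ \emph{cyclically reduced}, discarding any block for which $c_i$ is empty or $x_i = 0$. The point of this step is the elementary fact that a power of a cyclically reduced word is itself freely reduced; hence $W := p_1^{x_1}\cdots p_n^{x_n}$ is represented by the word $z_1 c_1^{x_1} z_1^{-1} z_2 c_2^{x_2} z_2^{-1} \cdots z_n c_n^{x_n} z_n^{-1}$, in which the only cancellation possible is at the $2n-1$ interfaces $z_i^{-1}z_{i+1}$. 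Next I would fix a polynomial threshold $T$ exceeding the total length of all the $z_i$ and $c_i$, and expand every block with $|x_i| < T$ explicitly — this costs only polynomially many letters — so that $W$ becomes a product $v_0 c_{j_1}^{y_1} v_1 \cdots c_{j_m}^{y_m} v_m$ with $\sum_t |v_t|$ polynomial and every $|y_t| \ge T$.

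The key combinatorial claim is that cancellation among the remaining ``big'' power blocks is governed entirely by their primitive roots: the surviving parts of two blocks can become adjacent in $\red{W}$ only if their primitive roots lie in one class under cyclic conjugation and inversion, and blocks in incomparable classes never interact. Along each such class the configuration telescopes like a sequence of signed exponents, with the alignment between consecutive blocks determined by the (short) connectors $v_t$ — detectable by oracle calls on short words — and the resulting exponent sums computed by $\TC$ integer arithmetic. Carrying this out yields an explicit power word $q_1^{e_1}\cdots q_\ell^{e_\ell}$ of polynomial total period length representing the same element as $W$ in which no two power blocks can cancel; so $W =_F 1$ iff $\ell = 0$ and the accumulated short residue is the empty word, which is one more $\WP(F_2)$ query. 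Everything above is polynomially many polynomial-size oracle queries and $\TC$ (hence $\AC(\WP(F_2))$) computations combined in constant depth, giving the desired reduction.

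The hard part will be justifying the key combinatorial claim: one must show that free cancellation between arbitrarily many periodic blocks is \emph{shallow} — it never reaches past the nearest block of an incomparable primitive-root class, so that $\red{W}$ admits a polynomial-size description computable in constant depth — and one must implement the attendant combinatorics of overlapping periodic words (recognising conjugacy of primitive roots, testing alignment of connectors, computing exact cancellation lengths and telescoped exponents) within $\Nc{1}$ and $\TC$ so that it all fits into $\AC(\WP(F_2))$.
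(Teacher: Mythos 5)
Your overall plan coincides with the paper's: normalise each period to (a conjugate of) a cyclically reduced primitive word, observe that large cancellation can only happen between powers with the same primitive root, shrink the big exponents to polynomial size, and finish with a single word-problem query, with all arithmetic in $\TC\subseteq\AC(\WP(F_2))$. But the content of the theorem is exactly the part you defer: your ``key combinatorial claim'' is both unproven and, as stated, false. Cancellation between big power blocks is \emph{not} blocked by intervening blocks of an incomparable primitive-root class: in a word such as $a^N\, b^M b^{-M}\, a^{-N}$ (or any configuration where the material between two $p$-power blocks reduces to $1$), the two $a$-blocks interact across the $b$-blocks after the latter have been consumed, so cancellation can reach arbitrarily far past blocks of other classes. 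Likewise, surviving parts of blocks with different primitive roots certainly can end up adjacent in $\red{W}$ (e.g.\ $a^N b^M$). What is actually true, and what the paper proves via Fine--Wilf (Lemma~\ref{lem:short_cancellation}), is only that two powers of \emph{distinct} normalised periods can cancel at most $|p|+|q|-1$ letters against each other; turning that local bound into a global statement that the exponents can be shrunk consistently, despite nested cancellations in which whole blocks disappear, is the technical heart of the argument and is absent from your proposal.

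The paper carries this out by introducing the rewriting system $T$ over the extended alphabet $\Delta$ of powers and reduced words, bounding the number of rewriting steps by $O(|u|_\Delta)$ (using a forest structure on the pairs of blocks that merge, \prettyref{lem:rewriting_bound}), bounding how far the prefix exponent sums $\eta_p^i$ can drift during a derivation (\prettyref{lem:distance}), and then defining the shortening operator $S_\cC$ on exponent intervals far from all prefix sums and proving it commutes with every $T$-step (\prettyref{lem:shorten}, \prettyref{prop:main_lemma}), which yields the polynomial bound on the new exponents (\prettyref{lem:exponents_small}). Your ``telescoping along a class with alignment detected by short oracle calls'' gives no mechanism for handling the case where blocks of one class vanish entirely and distant same-class blocks then merge with shifted alignment, nor any invariant guaranteeing that the shrunk exponents reduce to $1$ exactly when the original ones do. A further, smaller omission: to get from ``same primitive-root class'' to literal equality of periods (so that exponents can simply be added) the paper's preprocessing also picks the lexicographically least representative among all cyclic permutations of $p$ and $p^{-1}$; without some such canonical choice your bookkeeping of conjugating connectors would itself need the kind of correctness proof you have not supplied. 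So the reduction is plausible in outline but the proof has a genuine gap where the paper's Section~\ref{sec:proof_free} does its main work.
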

Notice that if the free group has rank one, then the power word problem is in \TC because iterated addition is in \TC.

\begin{remark}
	If the input is of the form $(p_1,x_1,p_2,x_2,\ldots,p_n,x_n)$ where all $p_i$ are freely reduced, then the reduction in \prettyref{thm:free_power_wp} is a \TC-many-one reduction.
\end{remark}

\begin{remark}
 One can consider variants of the power word problem, where the exponents are not given in binary representation but in even more
 compact forms. \emph{Power circuits} as defined in \cite{MyasnikovUW12} are such a representation that allow 
non-elementary compression for some integers. The proof of 
\prettyref{thm:free_power_wp} involves iterated addition and comparison of exponents.
For power circuits iterated addition is in \AC (just putting 
the power circuits next to each other), but comparison (even for equality) is \P-complete \cite{Weiss15diss}. 
Hence, the variant of the power word problem, where exponents are encoded with power circuits is \P-complete.
\end{remark}
\begin{remark}
	The proof of \prettyref{thm:free_power_wp} can be easily generalized to free products. However, in order to have a simpler presentation we only state and prove the result for free groups and postpone the free product case to a future full version.
\end{remark}
It is easy to see that the power word problem for every finite group belongs to \Nc{1}. The following result generalizes this fact:

\begin{theorem}\label{thm:finite_index}
Let $G$ be finitely generated and let $H\leq G$ have finite index. Then $\PowWP(G)$ is \Nc{1}-many-one-reducible to $\PowWP(H)$. 
\end{theorem}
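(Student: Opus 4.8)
The plan is to use a Schreier coset-graph construction and to replace each power $p_i^{x_i}$ in the input power word by a single power with period a word over $\Sigma$ that already lies in (a conjugate of) $H$, plus bounded-length ``connector'' pieces. Fix a finite set of coset representatives $\{g_1,\dots,g_m\}$ for the right cosets $Hg$ of $H$ in $G$, with $g_1 = 1$, and for each generator $a\in\Sigma$ and each index $j$ let $g_j a = h_{j,a}\, g_{\pi(a)(j)}$ where $h_{j,a}\in H$ and $\pi(a)$ is the permutation of $\{1,\dots,m\}$ induced by right multiplication by $a$. The $h_{j,a}$ and the map $\pi$ depend only on $G$ and $H$, not on the input, so they are constants of the reduction; moreover each $h_{j,a}$ can be written as a fixed word over a finite generating set $\Sigma_H$ of $H$ (choose such words once and for all). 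Extending $\pi$ to a monoid homomorphism from $\Sigma^*$ into $\Sym(m)$, a word $w\in\Sigma^*$ satisfies $w=_G 1$ iff $\pi(w)$ fixes $1$ and the ``$H$-part'' of $w$ read off along the coset path starting at $g_1$ equals $1$ in $H$.

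The main step is to handle a single factor $p^x$ with $x$ large and binary-encoded. The permutation $\pi(p)\in\Sym(m)$ has some order $d\le m!$, a constant. Write $x = qd + r$ with $0\le r < d$; then $\pi(p^x) = \pi(p)^r$, and along the path from coset $Hg_j$ the element $p^{qd}$ contributes the $H$-element $u_j^q$, where $u_j\in\Sigma^*$ is the (fixed-length, since $p$ has length $\le$ input size and $d$ is constant — actually length $\le d\cdot|p|$, polynomial) word spelling out the $H$-part of $p^d$ read starting from $g_j$, and $u_j$ as an element of $H$ can be expressed as a word $\tilde u_j$ over $\Sigma_H$ of polynomial length via the substitution $a\mapsto h_{*,a}$ described above. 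Thus $p^x$, read starting from coset $Hg_j$, contributes to the $H$-component the word $\tilde u_j^{\,q}\cdot (\text{a bounded-length tail corresponding to }p^r)$, and it maps coset $j$ to coset $\pi(p)^r(j) = \pi(p)^x(j)$. Both $q = \lfloor x/d\rfloor$ and $r = x\bmod d$ for the constant $d$ are computable from the binary encoding of $x$ by an $\AC$-circuit (division by a constant), so in particular by an $\Nc1$ circuit; this is where the logarithmic-depth budget is spent, and it is the only nontrivial computation.

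Putting the factors together: process $p_1^{x_1}p_2^{x_2}\cdots p_n^{x_n}$ from left to right, maintaining the current coset index $j$ (starting at $j=1$), which after factor $i$ becomes $\pi(p_i)^{x_i}(j)$ — each such value is obtained by the division computation above together with a constant-size table lookup in $\Sym(m)$. For each factor one emits, in order, the power word $\bigl(\tilde u_j^{(i)}, q_i, \text{tail}_i, 1\bigr)$ over $\Sigma_H$ (with $\tilde u^{(i)}_j$ and $\mathrm{tail}_i$ the polynomial-length words just described, depending on the incoming coset $j$). Concatenating these $n$ blocks yields a power word $W$ over $\Sigma_H$ whose length is polynomial in the input and whose exponents $q_i$ are binary integers obtained from the $x_i$ by constant-divisor division. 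Finally one checks separately that $\pi(p_1)^{x_1}\cdots\pi(p_n)^{x_n}$ fixes $1$ (a constant-size computation, performed en route); if it does not, output a fixed non-identity power word, and otherwise output $W$. Then $p_1^{x_1}\cdots p_n^{x_n} =_G 1$ iff $\pi(\cdot)$ fixes $1$ and $W =_H 1$, so this is a many-one reduction to $\PowWP(H)$, and the whole construction is carried out in $\Nc1$ (indeed the only sub-$\AC$ ingredient is integer division by the constants $d_i = \operatorname{ord}(\pi(p_i))$).

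The part I expect to be the main obstacle is bookkeeping the $H$-parts cleanly: one must be careful that the word $u_j$ spelling the $H$-component of $p^d$ depends on the incoming coset $j$, that it is correctly rewritten into the fixed alphabet $\Sigma_H$ via the constants $h_{j,a}$, and that the ``boundary'' contributions $p^r$ (length $< d|p|$) and the transitions between consecutive factors are accounted for without introducing any dependence on the (exponentially large) value of $x_i$ beyond the single binary number $q_i$ fed as an exponent. All of this is routine once the Schreier-graph setup is fixed; the genuinely complexity-theoretic point is only that $\lfloor x_i/d_i\rfloor$ and $x_i \bmod d_i$ for constant $d_i$ are $\Nc1$-computable.
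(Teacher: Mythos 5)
Your construction is correct, but it follows a different route than the paper. The paper first replaces $H$ by its normal core $N=\bigcap_{g\in G}gHg^{-1}$ (noting $\PowWP(N)$ maps into $\PowWP(H)$ by the inclusion homomorphism), divides every exponent uniformly by the constant $\abs{G/N}$ so that each $p_i^{\abs{G/N}}$ lies in $N$, and then invokes Robinson's conjugate collection process to push the coset representatives to the right; normality is what makes conjugation by a representative a fixed homomorphism of $N$. You instead work with $H$ itself via the Schreier coset graph, dividing each $x_i$ by the order $d_i\leq m!$ of $\pi(p_i)$ and reading off the $H$-part of $p_i^{d_i}$ relative to the incoming coset; this avoids both the normal core (whose index may be much larger than $[G:H]$, though still constant) and any normality assumption, at the price of coset-dependent periods $\tilde u_j^{(i)}$. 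Two points deserve care. First, your phrase ``process the factors from left to right, maintaining the current coset index'' is, taken literally, a sequential scan; what you actually need is the sequence of all prefix products of the permutations $\pi(p_i)^{x_i \bmod d_i}$ in $\Sym{m}$, i.e.\ an instance of the prefix/word problem for a fixed finite group, which is in $\Nc{1}$ (this is the same ingredient the paper obtains from Robinson's Theorem~5.2); since there are only $m$ candidate values of $j$, you can also emit all $m$ variants of each block and select by the computed prefix permutation. Second, computing $\lfloor x_i/d_i\rfloor$ and $x_i\bmod d_i$ for a constant divisor is not an $\Ac0$ task in general (modular counting modulo $d$ is not in $\Ac0$ unless $d$ is a power of two), but it is in $\Nc{1}$ (even in \TC), which is all the theorem requires and matches the paper's use of division with remainder by $\abs{G/N}$. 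With these two standard facts made explicit, your argument is a complete and somewhat more self-contained proof of the statement.
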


\ifFullVersion

\begin{proof}
If $H \leq G$ is of finite index, then there is a normal subgroup $N \leq G$ of finite index and $N\leq  H$ (\eg $N= \bigcap_{g\in G} gHg^{-1}$). As $N \leq H$, $\PowWP(N)$ is reducible via a homomorphism (\ie in particular in \TC) to $\PowWP(H)$. 
Thus, we can assume that from the beginning $H$ is normal and that $Q=G/H$ is a finite quotient group. 
Notice that $H$ is finitely generated as $G$ is so.
Let $R\sse G$ denote a set of representatives of $Q$ with $1 \in R$. If we choose a finite generating set $\Sigma$ for $H$, then
$\Sigma \cup R \setminus \{1\}$ becomes a generating set for $G$.

As a first step for every exponent $x_i$ in the input power word we compute numbers $y_i,z_i$ with
$x_i = y_i\abs{Q} + z_i$ and $0 \leq z_i < \abs{Q}$ (\ie we compute the division with remainder by $\abs{Q}$).
This is possible in \Nc{1} \cite{HeAlBa02}. 
Note that $p_i^{\abs{Q}}$ is trivial in the quotient $Q = G/H$ and therefore represents an element of $H$.
Using the conjugate collection process from \cite[Theorem~5.2]{Robinson93phd} we can compute in $\Nc{1}$ a
word $h_i \in \Sigma^*$ such that $p_i^{\abs{Q}} =_G h_i$. 
Then we replace in the input word every $p_i^{x_i}$ by $h_i^{y_i} p_i^{z_i}$ where we write $p_i^{z_i}$ as a word without exponents.
We have obtained a word where all factors with exponents represent elements of $H$. Finally, we proceed like Robinson \cite{Robinson93phd} for the ordinary word problem 
treating words with exponents as single letters (this is possible because they are in $H$).

To give some more details, let us denote the result of the previous step as
$g_0 h_1^{y_1} g_1 \cdots h_n^{y_n} g_n$ with $g_i \in (\Sigma \cup R \setminus \{1\})^*$. By \cite[Theorem~5.2]{Robinson93phd} we can rewrite $g_i$ as $g_i = \tilde h_i r_i$ with $r_i \in R$ and $\tilde h_i \in \Sigma^*$. 
Once again, we follow \cite{Robinson93phd} and write $\tilde h_0 r_0 h_1^{y_1} \tilde h_1 r_1 \cdots h_n^{y_n} \tilde h_n r_n$ as
\[
\tilde h_0 w_0 (a_1 h_1^{y_1} \tilde h_1a_1^{-1}) w_1  (a_2 h_2^{y_2} \tilde h_2a_2^{-1})w_2  \cdots (a_n h_n^{y_n} \tilde h_na_n^{-1})w_na_{n+1}
\]
where $a_i$ is the representative of $r_0 \cdots r_{i-1}$ in $R$ and $w_i = a_i r_i a_{i+1}^{-1}$. The conjugation $(a_i h_i^{y_i} \tilde h_ia_i^{-1})$ is an application of one of a fixed finite set of homomorphisms and, thus, can be computed in $\TC$. Notice that $w_i \in H$ for all $i$ and, as it comes from a fixed finite set (namely $R \cdot R \cdot R^{-1}$), each $w_i$ can be rewritten to $w_i' \in \Sigma^*$. Now it remains to verify whether $a_{n+1}= 1$ (in $\NC^1$). If this is not the case, we output any non-identity word in $H$, otherwise we output
$\tilde h_0 w_0' (a_1 h_1^{y_1} \tilde h_1a_1^{-1}) w_1'  (a_2 h_2^{y_2} \tilde h_2a_2^{-1})w_2'  \cdots (a_n h_n^{y_n} \tilde h_na_n^{-1})w_n'$.
\end{proof}
\else
\begin{proof}[Proof sketch]
\Wlog we can assume that $H$ is a finitely generated normal subgroup and $R$ is a finite set of representatives of $Q := G/H$ with $1\in R$.
As a first step we replace in the input power word every $p_i^{x_i}$ by $h_i^{y_i} p_i^{z_i}$ where $x_i = y_i\abs{Q} + z_i$, $0 \leq z_i < \abs{Q}$ and $h_i$ is a word over the generators of $H$ with $p_i^{\abs{Q}} \!=_G\! h_i$. Moreover, we write $p_i^{z_i}$ as a word without exponents.
Using the conjugate collection process from \cite[Theorem~5.2]{Robinson93phd}, the result can be rewritten in the form $h r$ where $h$ is a power word in the subgroup $H$ and $r\in R$.
\end{proof}
\fi

As an immediate consequence of \prettyref{thm:free_power_wp}, \prettyref{thm:finite_index} and the \Nc{1}-hardness 
of the word problem for $F_2$ \cite[Theorem~6.3]{Robinson93phd} we obtain:
\begin{corollary}
	The power word problem for every finitely generated virtually free group is \Ac{0}-Turing-reducible to the word problem for the free group $F_2$.
\end{corollary}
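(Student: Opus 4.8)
The plan is to chain the two reductions established above, the only subtlety being that they are of different types and must be composed across the $\Nc{1}$/$\Ac{0}$ boundary. First I would unfold the hypothesis: if $G$ is finitely generated and virtually free, then by definition $G$ contains a free subgroup $F$ of finite index, and since a finite-index subgroup of a finitely generated group is itself finitely generated, $F$ is a \emph{finitely generated} free group. Hence \prettyref{thm:free_power_wp} applies to $F$ and yields $\PowWP(F) \in \AC(\WP(F_2))$, while \prettyref{thm:finite_index} provides an $\Nc{1}$-many-one reduction from $\PowWP(G)$ to $\PowWP(F)$.

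It then remains to check that an $\Nc{1}$-many-one reduction followed by an $\Ac{0}$-Turing reduction to $\WP(F_2)$ is again an $\Ac{0}$-Turing reduction to $\WP(F_2)$. This is exactly the point where I would invoke Robinson's theorem that $\WP(F_2)$ is $\Nc{1}$-hard \cite[Theorem~6.3]{Robinson93phd}, \ie $\Nc{1} \subseteq \AC(\WP(F_2))$: it implies that $\AC(\WP(F_2))$ is closed under $\Nc{1}$-many-one reductions. Concretely, the reduction function of \prettyref{thm:finite_index} (whose bit-graph lies in $\Nc{1}$) can be evaluated by a \DLOGTIME-uniform constant-depth polynomial-size circuit family with $\WP(F_2)$-oracle gates, and feeding its output into the corresponding circuit family for $\PowWP(F)$ gives such a family for $\PowWP(G)$; composing two circuit families of constant depth and polynomial size again has constant depth and polynomial size, and the only non-standard gates are $\WP(F_2)$-oracle gates. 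Thus $\PowWP(G) \in \AC(\WP(F_2))$, which is the claim. (Note that \prettyref{thm:free_power_wp} is stated for an arbitrary finitely generated free group, so the degenerate cases $F = \{1\}$, where $G$ is finite, and $F \cong \Z$ need no separate treatment; in the latter case one even gets $\PowWP(G) \in \TC$, which is also contained in $\AC(\WP(F_2))$ since $\WP(\Z)$ --- and hence $\WP(F_2)$ --- is $\TC$-hard under $\AC$-many-one reductions.)

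I do not expect a genuine obstacle here: the statement really is an immediate consequence of \prettyref{thm:free_power_wp}, \prettyref{thm:finite_index}, and the $\Nc{1}$-hardness of $\WP(F_2)$. The one thing I would take care to get right while writing it out is the transitivity and \DLOGTIME-uniformity bookkeeping of the composition described above; everything else is routine.
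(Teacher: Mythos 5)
Your argument is correct and is exactly the paper's intended route: the paper derives the corollary as an immediate consequence of \prettyref{thm:free_power_wp}, \prettyref{thm:finite_index}, and Robinson's $\Nc1$-hardness of $\WP(F_2)$, which is precisely your chain (finite-index reduction, then the free-group theorem, with the $\Nc1$ computation absorbed into $\AC(\WP(F_2))$ via $\Nc1 \subseteq \AC(\WP(F_2))$). Your extra care about the finitely generated free subgroup and the degenerate ranks is fine but not needed beyond what the paper tacitly assumes.
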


\begin{theorem} \label{thm:tc0-wreath}
For every finitely generated abelian group $G$,  $\PowWP(G \wr \Z)$ \ifFullVersion belongs to \else is in \fi $\TC$.
\end{theorem}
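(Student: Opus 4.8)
The plan is to reduce $\PowWP(G\wr\Z)$ to a collection of $\TC$-computable arithmetic tasks, exploiting that $G$ is abelian. First I would normalize the input. A power word over the generators of $G\wr\Z$ is a product of powers $p_i^{x_i}$, where each $p_i$ is a word over the generators $\Sigma_G\cup\{t,t^{-1}\}$ (with $t$ a generator of $\Z$). Each letter contributes either a shift in the $\Z$-coordinate or a deposit of a generator of $G$ at the current cursor position. For a single syllable $p_i^{x_i}$ we can compute: (a) the total $\Z$-displacement $\delta_i\in\Z$ caused by one copy of $p_i$ (a sum of $\pm1$'s, hence in $\TC$), and (b) a compact description of the ``deposit pattern'' of $p_i$, namely a list of pairs $(d_{i,j}, g_{i,j})$ meaning that one pass of $p_i$ adds $g_{i,j}\in G$ at relative position $d_{i,j}$. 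Then $p_i^{x_i}$, starting at absolute cursor position $c$, deposits $g_{i,j}$ at positions $c + d_{i,j} + k\delta_i$ for $k=0,\dots,x_i-1$ (with the obvious modification for $x_i<0$), and moves the cursor to $c+x_i\delta_i$. The absolute starting position $c$ of syllable $i$ is the prefix sum $\sum_{\ell<i} x_\ell\delta_\ell$, computable in $\TC$ by iterated addition.

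The element represented by the whole power word is trivial in $G\wr\Z$ iff (i) the final cursor position $\sum_i x_i\delta_i$ equals $0$, and (ii) for every position $m\in\Z$, the sum in $G$ of all generators deposited at $m$ is $1_G$. Condition (i) is a single iterated-addition-plus-comparison, so it is in $\TC$. For condition (ii), fix a finite generating set of $G$; since $G$ is abelian and finitely generated, the value accumulated at position $m$ is determined by finitely many integer counts (one coefficient per generator, each an element of $\Z$ or $\Z/r$ for torsion generators), so it suffices to check that each such count is $0$ (resp.\ $\equiv 0$) simultaneously for all $m$. The set of positions that can receive a deposit is the union over $i,j$ of the arithmetic progressions $\{c_i + d_{i,j} + k\delta_i : 0\le k<|x_i|\}$. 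The key point is that a position $m$ receives a nonzero contribution from syllable $i$ via index $j$ exactly when $m\equiv c_i+d_{i,j}\pmod{\delta_i}$ (if $\delta_i\neq 0$) and $m$ lies in the right range — and the \emph{number} of such $k$, weighted by the generator, is what matters, not the individual $k$'s. So for each pair of syllables (or each syllable against the fixed-position deposits with $\delta_i=0$) one computes, by solving the relevant congruences/CRT and counting lattice points in an interval — all $\TC$ operations — the net contribution, and then argues that the global sum at $m$ is $0$ for all $m$ iff a $\TC$-checkable system of equalities holds.

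The main obstacle I expect is condition (ii): a priori there are exponentially many relevant positions $m$, so one cannot iterate over them. The resolution is the standard ``cancellation of arithmetic progressions'' idea: the net $G$-valued function $m\mapsto(\text{sum at }m)$ is, over each residue class modulo $L=\mathrm{lcm}_i\{\delta_i:\delta_i\ne0\}$ (or handled via CRT without ever forming $L$), a sum of finitely many indicator functions of intervals weighted by group elements; such a function is identically $0$ iff its ``derivative'' (the sequence of breakpoints together with jumps) cancels, and there are only polynomially many breakpoints — namely the endpoints $c_i+d_{i,j}$ and $c_i+d_{i,j}+x_i\delta_i$, reduced modulo the appropriate moduli. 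Sorting these polynomially many breakpoints and checking that the accumulated jump is $0$ between consecutive ones, and at every residue class, is exactly the kind of iterated-addition/sorting/comparison computation that lies in $\TC$ (sorting and iterated addition of polynomially many $\poly$-bit numbers are $\TC$-computable). Assembling: the reduction computes the $\delta_i$, the deposit patterns (of constant size per syllable), the prefix sums $c_i$, forms the polynomially many breakpoints with their $G$-valued jumps, groups them by residue class via CRT, sorts within each class, and verifies all net jumps vanish together with the final-cursor condition — all in $\TC$ — whence $\PowWP(G\wr\Z)\in\TC$.

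Since the preceding remark in the paper notes the rank-one case is in $\TC$ via iterated addition, and since $\Z$ itself is abelian, this argument is a genuine generalization; the only real content beyond bookkeeping is the breakpoint-cancellation lemma for sums of weighted interval-indicators over arithmetic progressions, which I would isolate as a separate lemma.
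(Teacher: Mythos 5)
Your setup (computing the shifts $\delta_i$, the deposit patterns, the prefix-sum cursor positions $c_i$, and checking the final cursor position by iterated addition) is sound and in fact mirrors the paper's reduction in \prettyref{prop:TC0-reduction}. The gap is in condition (ii), exactly where the real difficulty of the theorem sits. The support of the accumulated $G$-valued function is a union of weighted \emph{arithmetic progressions} with pairwise different moduli $\delta_i$, not of intervals. Your breakpoint-cancellation lemma is correct for sums of weighted interval indicators on $\Z$, but it does not apply here: between two consecutive of your polynomially many breakpoints $c_i+d_{i,j}$, $c_i+d_{i,j}+x_i\delta_i$, the set of active progressions is constant, yet the function is \emph{not} constant there -- it is a sum of periodic sequences with periods $\delta_i$, and its ``derivative'' is supported on essentially every point of that (exponentially long) stretch, driven by CRT coincidences among the residues $c_i+d_{i,j}\bmod \delta_i$. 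The only way to recover genuine interval structure is to restrict to a residue class modulo $L=\mathrm{lcm}_i\,\delta_i$, but then there are exponentially many classes, and which progressions hit a given class depends on the class; ``handled via CRT without ever forming $L$'' and ``grouping by residue class'' do not explain how to quantify over all these classes with a polynomial-size $\TC$ circuit. So the check you describe (sort polynomially many breakpoints, verify the accumulated jumps vanish) is simply not equivalent to the vanishing of the function, and the step is not routine bookkeeping.

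What you have actually reduced the problem to -- deciding whether a sum of periodic $G$-valued sequences, given by their period blocks, vanishes on an interval $[0,m-1]$ with $m$ in binary -- is precisely the uniform membership problem $\MEM(G^\rho_{\ast})$ that the paper isolates. The paper's proof handles the boundary positions (your breakpoints, its set $C$) by polynomially many calls to $\PowWP(G)$, uses \prettyref{lem:periodic-wreath} to see that on each of the polynomially many remaining stretches the function is a sum of periodic sequences, and then invokes \prettyref{thm:abelian-membership}, a non-trivial result from \cite{GanardiKLZ18} stating that $\MEM(G^\rho_{\ast})\in\TC$ for finitely generated abelian $G$. That last ingredient needs an idea specific to abelian groups (roughly, that vanishing on a sufficiently long interval, measured against the periods involved, already forces vanishing on the whole range, after which polynomially many sample points suffice); your sketch neither states nor proves a substitute for it, so as written the proposal does not establish the theorem. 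If you supply and prove such a lemma (or cite \cite{GanardiKLZ18}), the rest of your argument goes through and is essentially the paper's proof.
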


\begin{theorem} \label{thm:wreath-coNP}
Let $G$ be either a finite non-solvable group or a finitely generated free group of rank at least two.
Then $\PowWP(G \wr \Z)$ is  \coNP-complete.
\end{theorem}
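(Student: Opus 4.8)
The plan is to prove membership in \coNP\ and \coNP-hardness separately. For membership, I would first reduce $\PowWP(G \wr \Z)$ to a conjunction of simpler statements about the group $G$. Recall that an element of $G \wr \Z$ is a pair $(f,z)$ with $f \colon \Z \to G$ of finite support; a product $p_1^{x_1}\cdots p_n^{x_n}$ over the generators of $G \wr \Z$ evaluates to the identity iff (a) the total $\Z$-exponent is $0$, and (b) for \emph{every} position $j \in \Z$, the accumulated $G$-value deposited at $j$ is trivial in $G$. Condition (a) is just an iterated-addition check on the binary exponents, computable in \TC\ (hence in \coNP). For condition (b), the key observation is that, expanding each power $p_i^{x_i}$, the cursor visits only positions in a union of $n$ arithmetic-progression-like intervals whose endpoints are binary numbers of polynomial bit-length; so the set of ``relevant'' positions $j$ is contained in an explicit interval $[-M,M]$ with $M$ of exponential size but polynomial bit-length. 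A \coNP\ machine can \emph{universally guess} a position $j$ (its binary encoding has polynomial length) together with a witness that the $G$-value at $j$ is nontrivial. The $G$-value at a fixed position $j$ is itself a power word over the generators of $G$: for each $i$, the factor $p_i^{x_i}$ contributes, at position $j$, a product of at most $|p_i|$ powers of (cyclic conjugates of) the $G$-part of $p_i$, with binary exponents obtained from $x_i$ and $j$ by division with remainder (computable in \TC). Concatenating these over $i$ yields a power word $v_j$ over the generators of $G$ of polynomial size, and we need $v_j =_G 1$ to fail. Thus $\PowWP(G\wr\Z)$ is conjunctive-truth-table \TC-reducible (with a universally quantified position) to $\PowWP(G)$; since $\PowWP(G)$ is in \coNP\ — for $G$ finite it is in \Nc1\ by Barrington, and for $G$ free of rank $\ge 2$ it is in \L\ by \prettyref{thm:free_power_wp} — Lemma \ref{lem:conjunctive-truth-table} (together with closure of \coNP\ under the universal position-guess) gives membership in \coNP.

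For \coNP-hardness I would reduce from a standard \coNP-complete problem, and the natural choice is the complement of \textsc{SubsetSum} or, more conveniently, the approach already used for the compressed word problem in \cite{LohreySpringerbook2014}: encode an instance so that the wreath product ``spreads out'' exponentially many positions and checks a coincidence. Concretely, using powers $p^x$ with $p$ involving the $\Z$-generator $t$, one can make the cursor deposit a fixed nontrivial $G$-element $g$ at every position in an arithmetic progression $\{a, a+d, a+2d, \dots\}$; with several such powers one arranges that position $0$ (say) receives the product $g^{c}$ where $c$ counts how many progressions hit $0$, i.e. how many of the given numbers divide a target — encoding an instance of a \coNP-complete counting/divisibility condition. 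The non-solvability of $G$ (resp. non-abelianness of $F_2$) is what is used here: one needs commutators to detect multiplicities, exactly as in the \coNP-hardness proof for $\CWP(G\wr\Z)$; since the power word construction is a special case of the SLP construction used there, the same reduction applies verbatim and already produces a power word, sharpening the compressed result. I would cite \cite{LohreySpringerbook2014} and just indicate that the SLPs produced there are in fact power words.

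The main obstacle, and the place the argument needs real care, is the membership direction: making precise that the set of positions the cursor can ever visit is, despite being of exponential cardinality, describable by polynomially many binary numbers, and that for a guessed position $j$ the ``value at $j$'' is genuinely a \emph{polynomial-size} power word over $\Sigma_G$. The subtlety is that within a single factor $p_i^{x_i}$ the cursor passes over position $j$ up to roughly $x_i/|p_i|$ times, depositing the same (conjugate of a) $G$-element each time; one must collect these $x_i/|p_i|$ occurrences into a \emph{single} power $q^{e}$ with $e$ a binary exponent rather than writing them out — this is exactly the division-with-remainder computation, which lives in \TC. Getting the bookkeeping of cyclic conjugates, boundary effects at the two ends of the run of $p_i^{x_i}$, and the case of negative exponents all correct is the technical heart of the proof; everything else is assembling known facts ($\PowWP(G) \in \coNP$, iterated addition in \TC, Lemma \ref{lem:conjunctive-truth-table}).
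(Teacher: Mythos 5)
Your \coNP membership argument is sound and is essentially the computation the paper already performs: the power word $v_j$ you attach to a binary-encoded position $j$ is exactly Claim~1 inside the proof of \prettyref{prop:TC0-reduction} (there called $w_m$). Two remarks. First, your pass-counting is slightly off: for a period $u$ with $\sigma(u)\neq 0$, the cursor sweeps over a fixed position during at most $|u|$ consecutive repetitions of $u$ (independently of the exponent $x$), and the contribution is a plain word of length at most $|u|^2$ with no binary exponent needed; only when $\sigma(u)=0$ do all $x$ repetitions deposit the same element, which is then collected into a single power with exponent $x$. This is a repairable bookkeeping slip, not a gap. Second, the paper does not universally guess the position: in order to get a deterministic \TC{} truth-table reduction (which it reuses for \prettyref{thm:tc0-wreath}), it covers the exponentially many positions far from the breakpoints by the periodicity statement \prettyref{lem:periodic-wreath} together with the membership problem $\MEM(G^\rho_{\ast})$ of \cite{GanardiKLZ18}, and queries $\PowWP(G)$ only on the polynomially many positions near breakpoints. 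For the \coNP upper bound alone, your universal guess of a position followed by a $\PowWP(G)$ check is perfectly fine.

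The genuine gap is the hardness direction. The claim that the \coNP-hardness construction for the compressed word problem in \cite{LohreySpringerbook2014} ``already produces power words'' and ``applies verbatim'' is unjustified and does not work as stated. That construction applies to \emph{every} non-abelian $G$ (for instance the finite solvable group $S_3$), whereas the present theorem is claimed only for finite non-solvable or free $G$; if the SLPs there were power words, $\PowWP(S_3\wr\Z)$ would be \coNP-hard, which the paper does not assert. The structural obstruction is \prettyref{lem:periodic-wreath} itself: a single power $p^x$ deposits, away from the two ends of its sweep, a pattern that is periodic with period $|\sigma(p)|\le|p|$, so power words can only superpose patterns with polynomially bounded periods, while the SLP construction uses iterated squaring to create deposit patterns of exponential period (indexed by the bits of the position). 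Hence a new reduction is needed, and it must solve two problems your sketch leaves open: (i) encode the $2^n$ truth assignments by positions using only small periods, which the paper does via the Chinese remainder theorem with the first $n$ primes $p_1,\dots,p_n$ and gadgets such as $(a_j b^{p_i})^{M/p_i}b^{-M}$ with $M=\prod_i p_i$; and (ii) realize, inside $G$ and simultaneously at every position, an AND of polynomially many clause values. Mere non-abelianness gives only a single commutator test, and unbounded-depth iterated commutators can vanish in a solvable group; this is exactly where the hypotheses enter: for finite non-solvable $G$ the paper uses Barrington's $G$-programs \cite{Barrington89} (\prettyref{prop:powWP-finite-coNPhard}), and for free $G$ a balanced binary tree of commutators, correct because elements supported on disjoint sets of free generators commute only if one of them is trivial, following Robinson \cite{Robinson93phd} (\prettyref{prop:powWP-free-coNPhard}). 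Your divisibility sketch, which inspects only what lands at position $0$, encodes a condition checkable in polynomial time; the \coNP-hardness has to come from the universal quantification over all (exponentially many) positions combined with points (i) and (ii).
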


\begin{theorem}\label{thm:grigorchuck}
	The power word problem for the Grigorchuk group (as defined in \cite{Grig80} and also known as \emph{first Grigorchuk group}) is \AC-many-one-reducible to its word problem. 
\end{theorem}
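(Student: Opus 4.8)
The plan is to reduce $\PowWP$ for the Grigorchuk group $\Gamma$ to its word problem by exploiting the well-known fact that elements of $\Gamma$ have uniformly bounded order, together with the branching (self-similar) structure of $\Gamma$. First I would recall that $\Gamma$ is a $2$-group in which the order of every element is bounded; more precisely, by \cite[Theorem~6.6]{BartholdiGS03} there is a constant $c$ such that every element $g\in\Gamma$ has order at most $\abs{g}^c$ when $g$ is given by a word of length $\abs g$ (the acknowledgement in the excerpt points exactly at this). Given a power word $(p_1,x_1,\ldots,p_n,x_n)$, for each period $p_i$ I would first compute (in the group $\Gamma$) the order $d_i$ of the element represented by $p_i$; since $d_i$ is polynomially bounded in $\abs{p_i}$ and $\WP(\Gamma)$ is available as an oracle, this can be done in $\AC$ by a straightforward search together with oracle calls checking $p_i^{k}=_\Gamma 1$. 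Then replace $x_i$ by $x_i \bmod d_i$, which is an $\AC$ operation on binary integers (division with remainder is in $\TC\subseteq\AC(\WP(\Gamma))$, since $\WP(\Gamma)$ is $\NC^1$-hard, and in any case we are allowed $\AC$-Turing reductions here). After this step every exponent is bounded by a polynomial in the input size, so the power word has been turned into an ordinary word of polynomial length over the generators of $\Gamma$, and we simply query the word-problem oracle on it.

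The subtlety is that the exponents $x_i$, while reduced modulo $d_i$, are still binary integers that could a priori be large — but $d_i \le \mathrm{poly}(\abs{p_i}) \le \mathrm{poly}(N)$ where $N$ is the total input length, so $x_i\bmod d_i$ is a number of polynomially bounded magnitude, hence writing out $p_i^{x_i \bmod d_i}$ explicitly yields a word of length at most $\abs{p_i}\cdot d_i = \mathrm{poly}(N)$. Concatenating these over all $i$ gives a single word $w$ of polynomial length with $w =_\Gamma 1$ iff $p_1^{x_1}\cdots p_n^{x_n} =_\Gamma 1$. The construction of $w$ from the input — performing the $n$ modular reductions in parallel and spelling out the resulting bounded powers — is an $\AC$-computation with oracle gates for $\WP(\Gamma)$ (the oracle gates being used only to determine the orders $d_i$). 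Finally a single oracle call on $w$ decides membership. Since each of these stages is $\AC$ and they are composed a constant number of times, the whole reduction is an $\AC$-many-one reduction to $\WP(\Gamma)$ (the only place truth-table behaviour enters is computing the $d_i$, which happens in the reduction function itself, so the final output is a single instance of $\WP(\Gamma)$ and the reduction is genuinely many-one).

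The main obstacle I expect is making the computation of the orders $d_i$ clean and uniform: one needs the explicit polynomial bound on $\mathrm{ord}(g)$ in terms of $\abs{g}$ for the Grigorchuk group (this is precisely what \cite[Theorem~6.6]{BartholdiGS03} provides), and one needs to organize the search for $d_i$ so that it uses only polynomially many parallel oracle queries of the form ``$p_i^{k}=_\Gamma 1$?'' for $k$ ranging over a polynomial-size set — here it is convenient to use that $\Gamma$ is a $2$-group, so $d_i$ is a power of $2$ bounded by $\mathrm{poly}(\abs{p_i})$, hence only $O(\log\abs{p_i})$ candidate values of $k$ need to be tested, and one takes the smallest $k$ for which the oracle answers ``yes''. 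A second point to check carefully is that forming the binary representations of the reduced exponents and expanding the bounded powers into explicit words stays within $\AC$; this is routine since it only involves iterated concatenation of polynomially many blocks each of polynomial length, indexed in an $\AC$-computable way. With these pieces in place the argument is complete.
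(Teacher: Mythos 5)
Your overall strategy (use the polynomial order bound from \cite[Theorem~6.6]{BartholdiGS03} to shrink the exponents, then hand a polynomial-length word to the word-problem oracle) is the same as the paper's, but the way you shrink the exponents does not give what the theorem claims. The statement asserts an \AC-\emph{many-one} reduction, i.e., the reduction function itself must be computable by \DLOGTIME-uniform $\Ac0$ circuits \emph{without} oracle gates for $\WP(G)$. Your reduction computes each order $d_i$ by oracle queries ``$p_i^{2^j}=_G 1$?'', so the function producing the output instance already uses the oracle; this yields only an $\Ac0$-Turing reduction (membership in $\AC(\WP(G))$), which is weaker than the stated theorem (though still enough for the corollary that the power word problem is in \L). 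Moreover, your main justification for the modular reduction~--- that division with remainder is in $\TC\subseteq\AC(\WP(G))$ because $\WP(G)$ is $\Nc1$-hard~--- is unsupported: the Grigorchuk group is a torsion $2$-group, it contains no non-abelian free subgroups and all its finite quotients are $2$-groups (hence solvable), so neither of the standard $\Nc1$-hardness arguments applies and no such hardness is known. (Your parenthetical remark that each $d_i$ is a power of two would repair the division issue, since reduction modulo a power of two is bit truncation, but it does not remove the oracle calls needed to find $d_i$ in the first place.)

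The paper's proof avoids computing any individual orders: since every element of length at most $n$ has order at most $Cn^{3/2}$ with, w.l.o.g., $C=2^\ell$, and since every order is a power of two, the \emph{single} exponent $2^{2k+\ell}$ (where $2^k$ is the least power of two with $2^k\geq n$) annihilates all elements of length at most $n$ simultaneously. Reducing every exponent modulo this one power of two means simply dropping all but the $2k+\ell$ least significant bits~--- a plain $\Ac0$ operation needing neither division nor oracle access~--- after which all exponents are at most $4Cn^2$ and the power word can be expanded into an ordinary word. This is exactly what makes the reduction oracle-free and genuinely many-one. A further small point you gloss over: expanding the shortened power word into an ordinary word in $\Ac0$ requires a neutral letter to pad the output to a fixed length (otherwise computing the output positions needs iterated addition, which is only in \TC); the paper records precisely this caveat in the remark following the theorem.
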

\prettyref{thm:grigorchuck} applies only if the generating set contains a neutral letter. Otherwise, the reduction is in \TC. It is well-know that the word problem for the Grigorchuk group is in \L (see \eg\ \cite{Nekrashevych05,MiasnikovV17}). Thus, also the power word problem is in \L. 

\ifFullVersion
\begin{proof}[Proof of \prettyref{thm:grigorchuck}.]
	Let $G$ denote the Grigorchuk group.
	By \cite[Theorem~6.6]{BartholdiGS03}, every element of length $n$ in $G$ has order at most $Cn^{3/2}$ for some constant $C$. \Wlog $C = 2^\ell$ for some $\ell \in \N$. 
	On input of a power word with all periods of length at most $n$, we can compute the smallest $k$ with $2^k \geq n$ in \AC.
	We have $2^k \le 2n$. Now, we know that an element of length $n$ has order bounded by $2^{2k+\ell}$. Since the order of every element of $G$ is a power of two, this means that $g^{2^{2k+\ell}} = 1$ for all $g \in G$ of length at most $n$. Thus, we can reduce all exponents modulo $2^{2k+\ell}$ (\ie we drop all but the $2k+\ell$ least significant bits). Now all exponents are at most 
	$2^{2k+\ell} \leq 4Cn^2$ and the power word can be written as an ordinary word (to do this in \AC, we need a neutral letter to pad the output to a fixed word length). 
\end{proof}

\else
\begin{proof}[Proof sketch of \prettyref{thm:grigorchuck}.]
	By \cite[Theorem~6.6]{BartholdiGS03}, every element of length $N$ in  the Grigorchuk group has order at most $CN^{3/2}$ for some constant $C$. Since the order of every element is a power of two, we can reduce all exponents modulo the smallest power of two $\geq CN^{3/2}$ where $N$ is the length of the longest period $p_i$. After that the words are short an can be written without exponents.
\end{proof}
\fi

\section{Proof of \prettyref{thm:free_power_wp}}\label{sec:proof_free}

The proof of \prettyref{thm:free_power_wp} consists of two main steps: first we do some preprocessing leading to a particularly nice instance of the power word problem%
\ifFullVersion~(\prettyref{sec:preprocessing})\fi.
While this preprocessing is simple from a theoretical point of view, it is where the main part of the workload is performed during the execution of the algorithm. Then, in the second step, all exponents are reduced to polynomial size%
\ifFullVersion~(\prettyref{sec:shortening})\fi.
 After this shortening process, the power word problem can be solved by the ordinary word problem. 
The most difficult part is to prove correctness of the shortening process.
For this, we introduce a rewriting system over an extended alphabet of words with exponents%
\ifFullVersion~(\prettyref{sec:symbolic_reduction})\fi. 
\ifFullVersion 
The proof consists of a sequence of lemmas which all follow rather easily from the previous ones.
\else
We outline the proof in a sequence of lemmas which all follow rather easily from the previous ones and we give some small hints how to prove the lemmas.
\fi

\ifFullVersion
\subsection{Preprocessing}\label{sec:preprocessing}
\else
\subparagraph*{Preprocessing.}\label{sec:preprocessing}
\fi
We use the notations from the paragraph on free groups in Section~\ref{sec:prelims}. In particular, recall that $S= \set{a a^{-1} \to 1}{a \in \Sig}$.
Fix an arbitrary order on the input alphabet $\Sigma$. This gives us the lexicographic order on $\Sigma^*$, which is denoted by $\preceq$.
Let $\Omega \sse \IRR(S) \sse \Sigma^*$ 
denote the set of words $w$ such that 
\begin{itemize}
	\item $w$ is non-empty,
	\item $w$ is cyclically reduced (i.e, $w$ cannot be written as $a u a^{-1}$ for $a \in \Sigma$),
	\item $w$ is primitive (i.e, $w$ cannot be written as $u^n$ for $n \ge 2$),
	\item $w$ is lexicographically minimal among all cyclic permutations of $w$ and $w^{-1}$ (\ie $w \preceq uv$ for all $u,v \in \Sigma^*$ with $vu =w$ or $vu = w^{-1}$).
\end{itemize}
Notice that $\Omega$ consists of Lyndon words \cite[Chapter 5.1]{lot83} with the stronger requirement of being freely reduced, cyclically reduced and also minimal among the conjugacy class of the inverse. 
The first aim is to rewrite the input power word in the form
\begin{align} \label{eq:word-w}
\qquad\qquad w&=s_0p_1^{x_1}s_1 \cdots p_n^{x_n}s_n\qquad \text{with } p_i \in \Omega \text{ and } s_i \in \IRR(S).
\end{align}
The reason for this lies in the following crucial lemma which essentially says that, if a long factor of $p_i^{x_i}$ cancels with some $p_j^{x_j}$, then already $p_i= p_j$. Thus, only the same $p_i$ can cancel implying that we can make the exponents of the different $p_i$ independently smaller.

\begin{lemma}\label{lem:short_cancellation}
Let $p,q \in \Omega$, $x,y \in \mathbb{Z}$ and let $v$ be a factor of $p^x$ and $w$ a factor of  $q^{y}$. If $vw\RAS*S1$ and $\abs{v} = \abs{w} \geq \abs{p} + \abs{q} - 1$, then $p=q$. 
\end{lemma}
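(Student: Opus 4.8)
The plan is to prove the contrapositive-style statement by analyzing the structure of a word $v$ that is simultaneously a factor of a power of a primitive, cyclically reduced Lyndon-type word $p \in \Omega$ and whose inverse is a factor of a power of such a word $q \in \Omega$, under the length hypothesis $|v| \geq |p| + |q| - 1$. The key tool is the Fine–Wilf theorem (periodicity lemma): if a word has two periods $r$ and $s$ and its length is at least $r + s - \gcd(r,s)$, then it also has period $\gcd(r,s)$. First I would observe that since $v$ is a factor of $p^x$ (a doubly-infinite periodic word with period $|p|$, because $p$ is cyclically reduced, so $p^x$ is freely reduced and $v$ occurs literally inside the reduced word $p^{|x|}$ up to taking inverses), $v$ has period $|p|$; here I must be a little careful when $x < 0$, but since $p^{-1}$ is again cyclically reduced and has the same length as $p$, and $\Omega$ is closed under the relevant symmetry only up to lexicographic minimality — what matters is just that $v$ embeds in a word periodic with period $|p|$. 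Similarly, from $vw \RAS{*}{S} 1$ with $|v| = |w|$ we get $w = v^{-1}$ in $\Sigma^*$ (since both sides are freely reduced of the same length and $v w$ reduces to the empty word), and $w = v^{-1}$ is a factor of $q^y$, so $v^{-1}$ — and hence $v$, by the length-preserving involution — has period $|q|$.

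Next I would apply Fine–Wilf: $v$ has periods $|p|$ and $|q|$, and $|v| \geq |p| + |q| - 1 \geq |p| + |q| - \gcd(|p|,|q|)$, so $v$ has period $d := \gcd(|p|, |q|)$. In particular $v$ has a period dividing both $|p|$ and $|q|$. Now I would conclude that $p$ and $q$ are both powers of a common word of length $d$: since $v$ is long enough (length at least $|p|$, in fact at least $|p| + |q| - 1 \geq \max(|p|,|q|)$) to contain a full period-block of $p^x$ and of $q^y$, the fact that $v$ has period $d$ forces the primitive root of $p$ to have length dividing $d$ and likewise for $q$; but $p$ and $q$ are primitive, so $|p| = |q| = d$ — wait, more carefully, primitivity of $p$ means its primitive root is $p$ itself, so the minimal period of $p$ equals $|p|$, which must therefore divide... this needs the standard fact that a factor of $p^x$ of length $\geq |p|$ inherits $|p|$ as a period and no smaller one unless $p$ is imprimitive. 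Combining, $|p| = |q| = d$ and $v$ has period $|p| = |q|$. Then $p$ is a cyclic rotation of $q$ or of $q^{-1}$ (reading off a length-$|p|$ window of $v$ gives simultaneously a rotation of $p$ and of $q^{\pm 1}$). Finally, the lexicographic minimality condition built into the definition of $\Omega$ — namely $p \preceq uv'$ for all rotations of $p$ and $p^{-1}$, and the same for $q$ — forces $p = q$: both are the lexicographically least element of the same conjugacy-and-inverse class.

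The main obstacle I anticipate is the careful bookkeeping around signs of exponents and the involution $w \mapsto w^{-1}$: the set $\Omega$ is defined to be minimal among cyclic permutations of both $w$ and $w^{-1}$, precisely so that a word and its inverse that are ``conjugate'' in this generalized sense land on the same representative, and I will need to make sure that ``$v$ is a factor of $p^x$'' is correctly converted (when $x<0$) into ``$v$ is a factor of $(p^{-1})^{|x|}$'', which has period $|p|$ just the same. A second, more technical point is verifying that the primitivity of $p$ (respectively $q$) genuinely upgrades ``$v$ has period $d \mid |p|$'' to ``$d = |p|$'': this uses that a sufficiently long factor of $p^{|x|}$ has smallest period exactly $|p|$ when $p$ is primitive, which is exactly the Fine–Wilf / primitive-root argument again, applied to the window of $v$ of length $|p|$ together with the period $d$. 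Once these periodicity facts are in place, the conclusion $p = q$ is immediate from the normalization defining $\Omega$. I would present Fine–Wilf and the primitive-root lemma as black boxes citing \cite{lot83}, and keep the sign analysis as the one genuinely fiddly lemma.
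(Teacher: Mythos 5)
Your proposal is correct and follows essentially the same route as the paper's proof: freely reducedness of $v,w$ (from $p,q$ being cyclically reduced) gives $w=v^{-1}$ as words, Fine--Wilf yields the period $\gcd(\abs{p},\abs{q})$, primitivity forces $\abs{p}=\abs{q}$, and then $p$ being a cyclic permutation of $q$ or $q^{-1}$ together with the lexicographic-minimality condition in $\Omega$ gives $p=q$. The step you hesitated over is handled in the paper exactly as you sketched: a cyclic permutation of $p$ is a factor of $v$, hence has period $d=\gcd(\abs{p},\abs{q})$, and since $d$ divides $\abs{p}$, $d<\abs{p}$ would make $p$ a proper power.
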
	

\begin{proof}
Since $p$ and $q$ are cyclically reduced, $v$ and $w$ are freely reduced, i.e., $v=w^{-1}$ as words. Thus, $v$ has two periods $\abs{p}$ and $\abs{q}$. Since $v$ is long enough, by the theorem of Fine and Wilf \cite{FineWilf65} it has also a period of $\gcd(\abs{p},\abs{q})$. This means that also $p$ and $q$ have period $\gcd(\abs{p},\abs{q})$ (since cyclic permutations of $p$ and $q$ are factors of $v$). Assuming $\gcd(\abs{p},\abs{q}) < \abs{p}$, would mean that $p$ is a proper power contradicting the fact that $p$ is primitive. Hence, $\abs{p}=\abs{q}$. Since $\abs{v} \geq  \abs{p} + \abs{q}  - 1= 2\abs{p} -1$, $p$ is a factor of $v$, which itself is a factor of $q^{-y}$. Thus, $p$ is a cyclic permutation of $q$ or of $q^{-1}$. By the last condition on $\Omega$, this implies $p = q$. 
\end{proof}

\begin{lemma}\label{lem:preprocessing}
The following is in $\AC(\WP(F_2))$: given a power word $v$, compute a power word $w$ of the form \prettyref{eq:word-w} such that $v =_{F_X} w$.
\end{lemma}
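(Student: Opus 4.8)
The plan is to transform the input power word $v = (p_1, x_1, \ldots, p_n, x_n)$ into the normal form \prettyref{eq:word-w} in three conceptual stages, each implementable in $\AC(\WP(F_2))$. First I would freely reduce each period $p_i$: using the oracle for $\WP(F_2)$ one can, by binary search on prefixes (asking whether a candidate prefix equals a prefix of $\red{p_i}$ via word-problem queries of the form ``does $u^{-1} p_i[1..j]$ reduce to something of length $\le k$''), compute $\red{p_i}$ in $\AC(\WP(F_2))$; in fact free reduction of a single word is already in $\TC$, and more conveniently one can compute for each position whether it survives cancellation. Replacing $p_i$ by $\red{p_i}$ changes $p_i^{x_i}$ only up to conjugation, so after this step $v =_{F_X} s_0' p_1'^{x_1} s_1' \cdots$ where the $s_i'$ absorb the discarded conjugating prefixes/suffixes and all $p_i'$ are freely reduced. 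If some $p_i'$ is empty, drop that factor entirely.

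Next, for each freely reduced nonempty $p_i'$, I would cyclically reduce it: write $p_i' = a_i u_i a_i^{-1}$ repeatedly, so $p_i' = c_i \hat p_i c_i^{-1}$ with $\hat p_i$ cyclically reduced and $c_i$ the ``cyclic conjugator''. Then $p_i'^{x_i} = c_i \hat p_i^{x_i} c_i^{-1}$, and again the $c_i^{\pm 1}$ get merged into the neighbouring $s$-pieces. Computing the largest cancelling prefix/suffix pair of a freely reduced word is again doable with word-problem queries (or directly in $\TC$, since it amounts to finding the longest $j$ with $p_i'[1..j] = (p_i'[\text{end}-j+1..\text{end}])^{-1}$, a matching-palindrome computation). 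After this, every surviving period $\hat p_i$ is nonempty, freely reduced and cyclically reduced. Then I would test primitivity: $\hat p_i$ is a proper power iff it has a period $d \mid |\hat p_i|$ with $d < |\hat p_i|$; checking whether $\hat p_i[1..d]$ generates $\hat p_i$ for each divisor $d$ is a pattern-matching test in $\TC$. If $\hat p_i = r_i^{m_i}$ with $r_i$ primitive, replace $\hat p_i^{x_i}$ by $r_i^{m_i x_i}$ (multiplying the binary exponent by the constant $m_i$ is in $\TC$). Finally, for each now cyclically-reduced primitive period $r_i$, I would find the lexicographically least element $p_i \in \Omega$ among all $|r_i|$ cyclic rotations of $r_i$ and of $r_i^{-1}$ (computing a Lyndon-type canonical rotation is a $\TC$, indeed $\AC(\text{comparison})$, task), say $r_i = d_i p_i d_i^{-1}$ or $r_i^{-1} = d_i p_i d_i^{-1}$; in the latter case $r_i^{x_i} = (r_i^{-1})^{-x_i}$, so flip the sign of the exponent. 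Absorb $d_i^{\pm 1}$ into the neighbouring $s$-pieces once more.

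After all periods have been brought into $\Omega$, the word has the shape $s_0 p_1^{x_1} s_1 \cdots p_n^{x_n} s_n$ with $p_i \in \Omega$, but the $s_i$ are arbitrary words over $\Sigma$ (concatenations of the various discarded conjugators). The last step is to replace each $s_i$ by $\red{s_i} \in \IRR(S)$, which as noted is in $\TC \subseteq \AC(\WP(F_2))$; this does not affect the value in $F_X$. The resulting power word is of the form \prettyref{eq:word-w} and equals $v$ in $F_X$. Throughout, the exponents $x_i$ are only multiplied by fixed constants $m_i \le |p_i|$ and possibly negated, so they stay of polynomial bit-length, and each of the finitely many string-processing subroutines (free reduction, longest cancelling border, divisor/period testing, canonical rotation, and a final free reduction of the glue words) is in $\TC$ or directly uses the $\WP(F_2)$ oracle, so the whole computation is in $\AC(\WP(F_2))$.

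I expect the main obstacle to be the bookkeeping of how the conjugating prefixes and suffixes peeled off at each stage ($a_i \mapsto$ from free reduction, $c_i$ from cyclic reduction, $d_i$ from the rotation to $\Omega$) combine with the original separators and with each other to form the final $s_i$, and in particular verifying that these concatenated glue words are still of polynomial length so that their final free reduction stays in $\TC$; each glue word is a bounded-in-number concatenation of prefixes/suffixes of the input periods, hence of polynomial length, which is what makes it go through, but this needs to be stated carefully. A secondary point worth care is that constant-factor multiplication of a binary integer and the comparison needed for the canonical rotation are indeed in $\TC$ (iterated addition, respectively iterated comparison), which is standard.
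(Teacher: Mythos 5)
Your proposal follows the paper's proof essentially step for step: freely reduce the periods using the $\WP(F_2)$ oracle, cyclically reduce, extract the primitive root (multiplying the exponent), rotate to the lexicographically minimal representative among the cyclic permutations of the period and of its inverse (flipping the exponent sign in the inverse case), absorb all conjugators into the glue words, and finally reduce the glue words; the paper does exactly this, citing \cite{Weiss16} for the oracle-based free reduction. Two of your side justifications are inaccurate, though neither is fatal to the stated $\AC(\WP(F_2))$ bound. First, the parenthetical claim that computing $\red{w}$ for an arbitrary input word is ``already in \TC'' is not known and almost certainly false as stated: since $\WP(F_2)$ is \Nc1-hard, free reduction in \TC would give $\Nc1 \subseteq \TC$. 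You lean on this again for the last step (``as noted is in \TC''); the paper's actual justification there is different and sharper: each glue word is a concatenation of at most \emph{four} freely reduced words, so its reduction is just a cancelling-border computation, doable without the oracle~-- this, not the polynomial length you invoke, is what matters, and it is also what makes the subsequent Remark (a \TC many-one reduction when the input periods are already reduced) work. Second, your ``binary search on prefixes'' with adaptive oracle queries is not a constant-depth procedure as described; one must issue all candidate queries in parallel (or simply cite \cite[Prop.~20]{Weiss16}, as the paper does). Since you do also offer the oracle as a fallback for every reduction step, the lemma itself still follows from your argument after replacing the erroneous \TC claims by oracle calls.
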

\ifFullVersion	
\begin{proof}
	By \cite[Prop.\ 20]{Weiss16}, given a word $q \in \Sigma^*$, we can compute $\red{q}$ in $\AC(\WP(F_2))$. In order to transform the input $v = q_1^{y_1} \cdots q_n^{y_n}$ into the desired form \prettyref{eq:word-w}, we proceed as follows: 
	\begin{itemize}
		\item Freely reduce the $q_i$: for all $i$ compute $ \hat q_i = \red{q_i}$ (this is in  $\AC(\WP(F_2))$ 	by \cite{Weiss16}).
		\item Make all $ \hat q_i$ cyclically reduced: for all $i$ find the prefix $r_i \pref \hat q_i$ of maximal length such that  $\hat q_i = r_i\tilde q_ir_i^{-1}$ for some $\tilde q_i \in \Sigma^*$.  Now, $\tilde q_i$ is cyclically reduced and we have 
		\[ v =_{F_X} r_1{\tilde q_1}^{y_1} r_1^{-1} \cdots  r_n{\tilde q_n}^{y_n} r_n^{-1}.\]

		\item Make each $\tilde q_i$ primitive: find the shortest $\tilde p_i \pref \tilde q_i$ such that $q^{k_i}=\tilde q_i$ for some $1\leq k_i \leq \abs{\tilde q_i}$ and  replace $\tilde q_i$ by $\tilde p_i$ and $y_i$ by $x_i = k_iy_i$. The resulting word is
		\[ v =_{F_X} r_1{\tilde p_1}^{x_1} r_1^{-1}  \cdots r_n{\tilde p_n}^{x_n} r_n^{-1}.\]
		
		\item Make sure that $\tilde p_i$ is lexicographically minimal in its conjugacy class and the conjugacy class of $\tilde p_i^{-1}$. For all $i$ do the following: find $p_i$ such that $p_i$ is the lexicographically smallest element in $\set{uv}{vu= \tilde p_i \text{ or } vu =\tilde p_i^{-1}}$.
		If $p_i = u_iv_i$ for $v_iu_i= \tilde p_i$, replace $\tilde p_i^{x_i}$ by  $ u_i^{-1} p_i^{x_i} v_i$. Otherwise, we have $p_i = u_iv_i$ for $v_iu_i= \tilde p_i^{-1}$. Then we replace $\tilde p_i^{x_i}$ by  $ v_i  p_i^{-x_i} u_i^{-1}$.
        The words in between the $p_i$-powers are finally replaced by their freely reduced normal forms.
	\end{itemize}
Notice that each step does not destroy the conditions achieved in the previous steps. Hence, the resulting word is in $\Omega$. 
Moreover, the first step is in $\AC(\WP(F_2))$ by \cite{Weiss16}; all the other steps can be easily seen to be in \AC. 
For the last step observe that the words in between the $p_i$-powers are 
    concatenations of at most four freely reduced words. Thus, their freely reduced normal forms 
    can be computed in \AC.

Nevertheless, be aware that the \AC bounds for the second to last step only work in the presence of a neutral letter $\epsilon$ (\ie $\epsilon = 1$ in $F_X$). Otherwise, we get \TC bounds as we need to concatenate the words for all $i$. In any case, if the $q_i$ are already freely reduced, the whole procedure is in \TC.
\end{proof}
We call the steps performed in the proof of \prettyref{lem:preprocessing} the \emph{preprocessing steps}.
\else
The proof of this lemma is straightforward using \cite[Prop.\ 20]{Weiss16} in order to compute freely reduced words. We call these steps the \emph{preprocessing steps}.
\fi
Henceforth, we will assume that the inputs for the power word problem are given in the form \prettyref{eq:word-w}.

\ifFullVersion
\subsection{The symbolic reduction system}\label{sec:symbolic_reduction}
\else
\subparagraph*{The symbolic reduction system.}
\fi
\newcommand{\len}{\operatorname{\lambda}}

 We define an infinite alphabet $\Delta = \Delta' \cup \Delta''$ with $\Delta' = \Omega \times(\Z \setminus\smallset{0})$ and $\Delta'' = \IRR(S)\setminus\smallset{1}$.
 We write $p^x$ for $(p,x) \in \Delta'$. We can read every word over $\Delta$ as a word over $\Sigma$ in the natural 
way. Formally,
we can define a canonical projection $\pi: \Delta^* \to \Sigma^*$ that maps a symbol $a \in \Delta$ to the corresponding word over $\Sigma$,
but most of the times we will not write $\pi$ explicitly.

 Whenever there is the risk of confusion, we write $\abs{v}_\Sigma$ to denote the length of $v \in \Delta^*$ 
read over $\Sigma$
(i.e., $\abs{v}_\Sigma = |\pi(v)|$)
 whereas $\abs{v}_\Delta$ is the length over $\Delta$.  
 Moreover, we denote the number of letters from $\Delta'$ in $w$ with $\abs{w}_{\Delta'}$. 
 \ifFullVersion\else
Finally, for a symbol $s \in \Delta''$ define $\len(s) = \abs{s}_\Sigma$ and for $p^x \in \Delta'$ set $\len(p^x) = \abs{p}_\Sigma$. For 
 $u = a_1\cdots a_m \in \Delta^*$ with $a_i \in \Delta$ for $1 \leq i \le m$ we
 define $\len(u) = \sum_{i=1}^{m} \len(a_i)$.
 Thus, $\len(u)$ is the number of letters from $\Sigma$ required to write down $u$ ignoring the binary exponents. 
 \fi 
 
 A word $w$ as in \eqref{eq:word-w}, which has been preprocessed as in the previous section, can be viewed as word over $\Delta$ with
 $w \in ((\Delta''\cup\smallset{1})\Delta')^*(\Delta''\cup\smallset{1})$ and 
 $\abs{w}_{\Delta'} = n$ and $\abs{w}_{\Delta} \leq 2n + 1$ (we only have $\leq$ because some $s_i$ might be empty).

\newcommand{\sS}{T}
We define the infinite string rewriting system $T$ over $\Delta^*$ by the following rewrite rules, where 
$p^x, p^y, q^y \in \Delta'$, $s,t \in \Delta''$, $r \in \Delta'' \cup \smallset{1}$, and $d,e \in \Z$. 
Here, $p^0$ is identified with the empty word.
Note that the strings in the rewrite rules are over the alphabet 
$\Delta$, whereas the strings in the if-conditions are over the alphabet $\Sigma$. 
\begin{align}\allowdisplaybreaks
&&  p^xp^y 	&\to p^{x+y} 			&& \label{eq:rule_uu}\\
&&  p^xq^y 	&\to p^{x-d}rq^{y-e} 	&&\text{if }  p \neq q,  p^{x}q^{y} \RAS+S p^{x-d}rq^{y-e}  \in \IRR(S) \text{ for }\label{eq:rule_uv} \\
		\nonumber &&&&& 	r = p'q' \text{ with } p'\spref p^{\sign(x)} \text{ and } q'\ssuff q^{\sign(y)}  \\
&&  st 		&\to r 					&&\text{if } st \RAS+S r\in \IRR(S)\label{eq:rule_gh}\\
&&  p^xs 	&\to p^{x-d}r 			&&\text{if } p^xs \RAS+S p^{x-d}r\in \IRR(S)\text{ for }\label{eq:rule_ug}\\
		\nonumber &&&&&		r = p's' \text{ with } p'\spref p^{\sign(x)} \text{ and } s' \ssuff s  \\
&&  sp^x 	&\to rp^{x-d} 			&&\text{if }  sp^x \RAS+S rp^{x-d}\in \IRR(S)\text{ for }\label{eq:rule_gu}\\
		\nonumber &&&&& 	r =s' p' \text{ with }  s' \spref s \text{ and } p'\ssuff p^{\sign(x)} 
\end{align}

\begin{lemma}\label{lem:rewriting_length_bound}
The following length bounds hold in the above rules:
\begin{itemize}
\item in rule \prettyref{eq:rule_uv}: 
$0 < \abs{r}_\Sigma \leq \abs{p}_\Sigma+\abs{q}_\Sigma$,  
 $\abs{d} \leq \abs{q}_\Sigma $, and $\abs{e} \leq \abs{p}_\Sigma$
 \item in rules \prettyref{eq:rule_ug} and \prettyref{eq:rule_gu}:
 $\abs{d} \leq \abs{s}_\Sigma$.
\end{itemize}
\end{lemma}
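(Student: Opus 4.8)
The plan is to analyze each rewrite rule by looking at where cancellation can occur when we read the left-hand side over $\Sigma$ and reduce it. The key tool throughout is \prettyref{lem:short_cancellation}: because all $p,q \in \Omega$ are cyclically reduced, primitive, and conjugacy-minimal, a freely reduced factor of $p^x$ that cancels against a freely reduced factor of $q^y$ and has length $\geq \abs{p}_\Sigma + \abs{q}_\Sigma - 1$ already forces $p=q$. In every rule below the periods involved are \emph{distinct} (in \prettyref{eq:rule_uv} by hypothesis $p\neq q$; in \prettyref{eq:rule_ug} and \prettyref{eq:rule_gu} one side is a letter of $\Delta''$, which has no period structure to exploit but is a single freely reduced word), so the cancelling region between a $p$-power and its neighbour can never reach length $\abs{p}_\Sigma + (\text{neighbour length})$; otherwise \prettyref{lem:short_cancellation} (or a direct periodicity argument) would be contradicted. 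This is exactly the bound we want, so the main work is bookkeeping of ``how many copies of $p$ get eaten'' versus ``how long is the surviving remainder''.

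Concretely, I would argue as follows for rule \prettyref{eq:rule_uv}. Write $p^{\sign(x)}$ and $q^{\sign(y)}$ for the two primitive blocks whose powers appear. When we reduce $p^xq^y$ over $\Sigma$, some suffix of $p^x$ cancels against some prefix of $q^y$; call the cancelled suffix of $p^x$ (as a freely reduced word) $u$ and the cancelled prefix of $q^y$ $u^{-1}$, so $\abs{u}_\Sigma = \abs{u^{-1}}_\Sigma$. Since $p\neq q$, \prettyref{lem:short_cancellation} applied to the factor $u$ of $p^x$ and $u^{-1}$ of $q^y$ gives $\abs{u}_\Sigma < \abs{p}_\Sigma + \abs{q}_\Sigma - 1 \leq \abs{p}_\Sigma + \abs{q}_\Sigma$. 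The number $\abs{d}$ of whole $p$-blocks removed satisfies $\abs{d}\cdot\abs{p}_\Sigma \leq \abs{u}_\Sigma + \abs{p}_\Sigma$ — the ``$+\abs{p}_\Sigma$'' accounting for the at most one partial block that is absorbed into $r$ — hence $\abs{d}\cdot\abs{p}_\Sigma < 2\abs{p}_\Sigma + \abs{q}_\Sigma$; but one must be a little more careful to get the clean bound $\abs{d}\leq\abs{q}_\Sigma$ claimed. The sharper count: after cancellation the result is $p^{x-d} r q^{y-e}$ with $r$ a freely reduced word built from a proper prefix $p'$ of $p^{\sign(x)}$ and a proper suffix $q'$ of $q^{\sign(y)}$. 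Thus exactly $\abs{d}$ copies of $p$ were consumed from the right end of $p^x$, and all but a $p'$-prefix of one further copy survives inside $r$; so the total $\Sigma$-length removed from the $p^x$ side is between $(\abs{d}-1)\abs{p}_\Sigma$ and $\abs{d}\,\abs{p}_\Sigma$, and this is at most $\abs{u}_\Sigma$. Combined with $\abs{u}_\Sigma$ also being bounded by the symmetric quantity on the $q$ side — namely the $\Sigma$-length removed from $q^y$, which is at most $\abs{e}\,\abs{q}_\Sigma$ — and with $\abs{u}_\Sigma<\abs p_\Sigma+\abs q_\Sigma-1$, one extracts $\abs d\le\abs q_\Sigma$ and $\abs e\le\abs p_\Sigma$ after checking the edge cases where $p'$ or $q'$ is empty. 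The bound $0<\abs r_\Sigma\le\abs p_\Sigma+\abs q_\Sigma$ is then immediate: $r$ is nonempty because the rule was stated to produce a reduced word in $\IRR(S)$ with a genuine $r$ (if everything cancelled we would be in a different rule), and $\abs r_\Sigma=\abs{p'}_\Sigma+\abs{q'}_\Sigma<\abs p_\Sigma+\abs q_\Sigma$ since $p'\spref p^{\sign(x)}$ and $q'\ssuff q^{\sign(y)}$ are both proper.

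For rules \prettyref{eq:rule_ug} and \prettyref{eq:rule_gu} the argument is the same but simpler: here the neighbour $s\in\Delta''$ is a single freely reduced word of length $\abs s_\Sigma$, so at most $\abs s_\Sigma$ letters can cancel from the end of $p^x$; that wipes out at most $\lceil \abs s_\Sigma / \abs p_\Sigma\rceil$ full $p$-blocks, but more precisely, since $r=p's'$ retains a proper prefix $p'$ of $p^{\sign(x)}$ together with a suffix $s'$ of $s$, the number $\abs d$ of fully consumed $p$-copies times $\abs p_\Sigma$ is at most the $\Sigma$-length cancelled, which is at most $\abs s_\Sigma$. Hence $\abs d\cdot\abs p_\Sigma\le\abs s_\Sigma$, and in particular $\abs d\le\abs s_\Sigma$ (using $\abs p_\Sigma\ge 1$). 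Rule \prettyref{eq:rule_gu} is symmetric. Rules \prettyref{eq:rule_uu} and \prettyref{eq:rule_gh} carry no exponent-shift bounds to prove.

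The main obstacle is getting the \emph{tight} constants $\abs d\le\abs q_\Sigma$ and $\abs e\le\abs p_\Sigma$ in rule \prettyref{eq:rule_uv} rather than the looser $2\abs q_\Sigma$-type bounds that fall out naively; this requires carefully distinguishing the (at most one) partially-cancelled $p$-block that is folded into $r$ from the fully-cancelled ones, and treating separately the degenerate cases where the partial block is empty (so $r$'s $p$-side contribution $p'$ is empty) or where it equals $p^{\sign(x)}$ up to one letter. Once the accounting is set up correctly on both the $p$ side and the $q$ side of the cancellation and the Fine–Wilf-based length restriction from \prettyref{lem:short_cancellation} is invoked to keep the overlap $u$ short, all the stated inequalities follow by elementary arithmetic.
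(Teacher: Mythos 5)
Your treatment of the exponent bounds follows essentially the paper's route: both arguments hinge on \prettyref{lem:short_cancellation} to cap the length of the cancelled overlap when $p\neq q$, and your block-counting for rule \prettyref{eq:rule_uv} does yield $\abs{d}\le\abs{q}_\Sigma$ and $\abs{e}\le\abs{p}_\Sigma$ once the arithmetic you defer is carried out. The genuine gap is in your argument for $0<\abs{r}_\Sigma$. You dismiss it with ``if everything cancelled we would be in a different rule'', but that is not so: the rule format explicitly allows $r\in\Delta''\cup\{1\}$, and both $p'$ and $q'$ may be empty (the empty word is a strict prefix/suffix), so nothing in the rule's statement excludes $r=1$, and there is no other rule covering total cancellation between distinct periods. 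The claim $r\neq 1$ is precisely the mathematical content to be proved, and it does not follow from \prettyref{lem:short_cancellation} either: for $d=e=1$ total cancellation means the overlap has length $\abs{p}_\Sigma=\abs{q}_\Sigma$, which can be far below $\abs{p}_\Sigma+\abs{q}_\Sigma-1$. The paper's argument is algebraic: $r=1$ with $d\neq 0\neq e$ would give $p^{d}=q^{-e}$ in $F_X$, and since $p,q\in\Omega$ are primitive, cyclically reduced, and lexicographically minimal among the cyclic permutations of themselves and their inverses, this forces $p=q$, contradicting the hypothesis $p\neq q$ of the rule (the degenerate cases $d=0$ or $e=0$ with $r=1$ are impossible because a nonzero power of a nonempty cyclically reduced word does not reduce to $1$).

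A second, smaller slip concerns rules \prettyref{eq:rule_ug} and \prettyref{eq:rule_gu}, where you claim $\abs{d}\cdot\abs{p}_\Sigma\le\abs{s}_\Sigma$. This intermediate inequality is false in general, because a strict prefix $p'$ of one of the $d$ consumed copies may survive inside $r$: for instance $p=ab$, $s=b^{-1}$, $x=1$ (with a letter ordering making $ab\in\Omega$) gives $d=1$ but $\abs{d}\cdot\abs{p}_\Sigma=2>1=\abs{s}_\Sigma$. The correct count --- which you in fact used on the $p$-side of rule \prettyref{eq:rule_uv} --- is that the cancelled suffix of $p^x$ has length $\abs{d}\,\abs{p}_\Sigma-\abs{p'}_\Sigma\ge(\abs{d}-1)\abs{p}_\Sigma+1\ge\abs{d}$, and this quantity is at most $\abs{s}_\Sigma$ since it cancels against a prefix of the freely reduced word $s$; the stated bound $\abs{d}\le\abs{s}_\Sigma$ then still follows, so this part is a fixable bookkeeping error rather than a wrong approach.
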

\ifFullVersion
\begin{proof}
	The inequality  $\abs{r}_\Sigma \leq \abs{p}_\Sigma+\abs{q}_\Sigma$ for rule \prettyref{eq:rule_uv} holds, because $r$ is composed by a prefix of $p$ and a suffix of $q$. Moreover, $0 < \abs{r}$ because, if $p^d = q^{-e} $ for $d \neq 0 \neq e$, then, as $p,q \in \Omega$, we would have $p=q$, a contradiction.
	
	For the second inequality  $\abs{d} \leq \abs{q}_\Sigma$ assume that $\abs{d} > \abs{q}_\Sigma$. Then there is a suffix $t$ of $p^x$ with $\abs{t}_\Sigma \geq \abs{q}_\Sigma \abs{p}_\Sigma + 1$ which cancels with a prefix of $q^y$. As $\abs{q}_\Sigma \cdot \abs{p}_\Sigma + 1 \geq \abs{q}_\Sigma  + \abs{p}_\Sigma$ for $\abs{q}_\Sigma, \abs{p}_\Sigma \geq 1$ (which by assumption is the case), \prettyref{lem:short_cancellation} would imply that $p= q$, contradicting $p\neq q$. The inequality $\abs{e} \leq \abs{p}_\Sigma$
	can be shown analogously.
	
In rules \prettyref{eq:rule_ug} and \prettyref{eq:rule_gu}, $p^{-d}$ must be a prefix (resp.\ suffix) of $s$, which implies the bound.	
\end{proof}
\else
The inequalities  $\abs{d} \leq \abs{q}_\Sigma $ and $\abs{e} \leq \abs{p}_\Sigma$ follow from \prettyref{lem:short_cancellation}. The other inequalities are obvious. The next lemma is also straightforward from the definition.
\fi

\ifFullVersion

\begin{lemma}\label{lem:reduced_forms}
	Let $u,v \in \Delta^*$. We have 
	\begin{enumerate}
		\item $\pi(\IRR(\sS)) = \IRR(S)$,\label{eq:irrT}
		\item $u \RAS*T v$ implies $\pi(u) \RAS*S \pi(v)$, \label{eq:TS}
		\item $u=_{F_X}1$ if and only if $u \RAS{*}{\sS} 1$.\label{eq:T1}
	\end{enumerate} 
\end{lemma}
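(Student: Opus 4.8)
The statement has three parts; I would establish \ref{eq:TS} and \ref{eq:irrT} first, and then derive \ref{eq:T1} from them together with a termination argument. Part \ref{eq:TS} is essentially immediate: the plan is to check that every rule $\ell\to r$ of $\sS$ satisfies $\pi(\ell)\RAS{*}{S}\pi(r)$ in $\Sigma^*$. For the rules \eqref{eq:rule_uv}, \eqref{eq:rule_gh}, \eqref{eq:rule_ug}, \eqref{eq:rule_gu} this is literally part of the side condition (which even gives $\RAS{+}{S}$), and for \eqref{eq:rule_uu} it holds because $\pi(p^xp^y)$ reduces to $\pi(p^{x+y})$ by cancelling $aa^{-1}$-pairs. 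Then, since $\pi\colon\Delta^*\to\Sigma^*$ is a monoid homomorphism and $\RAS{*}{S}$ is stable under left and right multiplication by fixed words, one step $u\RAS{}{\sS}v$ gives $\pi(u)\RAS{*}{S}\pi(v)$, and \ref{eq:TS} follows by induction on the number of steps.

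For \ref{eq:irrT} I would prove the two inclusions separately. The inclusion $\IRR(S)\subseteq\pi(\IRR(\sS))$ is trivial: a word of $\IRR(S)$ is either empty — hence $\pi$ of the empty word over $\Delta$ — or a single letter of $\Delta''\subseteq\Delta$, and a single letter is $\sS$-irreducible because every left-hand side of $\sS$ has $\Delta$-length two. For the converse I would argue contrapositively: if $\pi(u)$ has a factor $aa^{-1}$ with $a\in\Sigma$, then, because each letter of $\Delta$ projects to a \emph{nonempty, freely reduced} word (for $\Delta'$ this uses that the elements of $\Omega$ are cyclically reduced), the pair $a,a^{-1}$ must consist of the last letter of $\pi(b_1)$ and the first letter of $\pi(b_2)$ for two consecutive letters $b_1b_2$ of $u$. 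A case distinction on the types of $b_1$ and $b_2$ then yields an applicable rule — \eqref{eq:rule_uu} when $b_1,b_2\in\Delta'$ share a base, and otherwise the matching one among \eqref{eq:rule_uv}, \eqref{eq:rule_gh}, \eqref{eq:rule_ug}, \eqref{eq:rule_gu} — so $u\notin\IRR(\sS)$.

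For \ref{eq:T1}, the direction ``$\Leftarrow$'' is immediate from \ref{eq:TS}: $u\RAS{*}{\sS}1$ forces $\pi(u)\RAS{*}{S}1$, hence $\pi(u)=_{F_X}1$. For ``$\Rightarrow$'' the plan is to first show that $\sS$ is terminating and then read off the conclusion. Termination I would obtain from the lexicographic measure $\mu(u)=\bigl(\abs{\pi(u)}_\Sigma,\abs{u}_\Delta\bigr)\in\N^2$: the side conditions of \eqref{eq:rule_uv}, \eqref{eq:rule_gh}, \eqref{eq:rule_ug}, \eqref{eq:rule_gu} force at least one $S$-step, so their right-hand sides have a strictly shorter $\Sigma$-projection, while \eqref{eq:rule_uu} does not increase $\abs{\pi(u)}_\Sigma$ and strictly decreases $\abs{u}_\Delta$; hence $\mu$ strictly decreases at every step and, $\N^2$ being well-founded, $\sS$ terminates. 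So every $u$ admits some $v\in\IRR(\sS)$ with $u\RAS{*}{\sS}v$. Given $u=_{F_X}1$, pick such a $v$: by \ref{eq:TS} we have $\pi(v)=_{F_X}\pi(u)=_{F_X}1$, by \ref{eq:irrT} we have $\pi(v)\in\IRR(S)$, and since the only freely reduced word representing $1$ in $F_X$ is the empty word, $\pi(v)=1$; as every $\Delta$-letter has nonempty $\pi$-image, $v$ itself is the empty word, i.e.\ $u\RAS{*}{\sS}1$.

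The step I expect to be the main obstacle is the case analysis in the nontrivial inclusion of \ref{eq:irrT}: for the rules \eqref{eq:rule_uv}, \eqref{eq:rule_ug}, \eqref{eq:rule_gu} one has to verify that the freely reduced form of $\pi(b_1b_2)$ genuinely decomposes into a power of the base of $b_1$, a short middle factor, and a power of the base of $b_2$, in exactly the prefix/middle/suffix shape demanded by the side condition. This is where the defining properties of $\Omega$ — cyclically reduced, primitive, and lexicographically minimal among its own conjugates and those of its inverse — enter, via \prettyref{lem:short_cancellation} and \prettyref{lem:rewriting_length_bound}.
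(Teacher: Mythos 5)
Your proof is correct and follows essentially the same route as the paper: part (ii) by inspecting the rules, part (i) trivially in one direction and, in the other, by observing that any non-$S$-reduced two-letter factor of a word over $\Delta$ admits an applicable $T$-rule, and part (iii) by reducing $u$ to a $T$-irreducible word and combining (i) and (ii) with the fact that every letter of $\Delta$ projects to a nonempty reduced word. The only difference is that you make the termination of $T$ explicit via a lexicographic measure, a detail the paper leaves implicit, so this is an added refinement rather than a divergence.
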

\begin{proof}
	The inclusion $\IRR(S) \sse \pi(\IRR(\sS))$ is trivial as $\IRR(S) = \Delta'' \cup \{1\} \subseteq \Delta \cup \{1\}$. For the other inclusion note that every two-letter factor can be made $S$-reduced using $\sS$. Point \ref{eq:TS} follows by a simple inspection of the rules. 
	
	For \ref{eq:T1}, first assume that $u \RAS{*}{\sS} 1$. Then by \ref{eq:TS} we have $\pi(u) \RAS{*}{S} 1$ and hence, $u=_{F_X}1$. On the other hand, if $u \RAS{*}{\sS} v \in \IRR(\sS)$ with $v \neq 1$, then  $\pi(u) \RAS*S \pi(v)$ with $\pi(v) \neq 1$. By \ref{eq:irrT} it follows that $\pi(v) \in \IRR(S)$. Hence, we have $u=_{F_X} v \neq_{F_X} 1$.
\end{proof}

\else

\begin{lemma}\label{lem:reduced_forms}
	For $u \in \Delta^*$ we have $u=_{F_X}1$ if and only if \smash{$u \RAS{*}{\sS} 1$}.
\end{lemma}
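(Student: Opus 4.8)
The plan is to reduce the statement to the three facts bundled in the full-version \prettyref{lem:reduced_forms} (which I may quote freely here), namely: (i) $\pi(\IRR(\sS)) = \IRR(S)$; (ii) $u \RAS*T v$ implies $\pi(u) \RAS*S \pi(v)$; and (iii) the corollary that $u =_{F_X} 1 \iff u \RAS*T 1$. Since the excerpt only asks for (iii), I will prove it directly, establishing the two auxiliary facts inline as small observations. For (i), the inclusion $\IRR(S) \subseteq \pi(\IRR(\sS))$ is immediate since $\IRR(S) = \Delta'' \cup \{1\} \subseteq \Delta \cup \{1\}$, and the reverse inclusion holds because any $\Sigma$-reducible two-letter factor $a a^{-1}$ inside $\pi(u)$ sits inside some two-letter (or one-letter-power) factor of $u$ over $\Delta$, on which one of the rules \prettyref{eq:rule_uu}--\prettyref{eq:rule_gu} applies — so $u \notin \IRR(\sS)$.

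For (ii) I would simply inspect the six rule schemes: each rule $\ell \to r$ of $T$ has the property that $\pi(\ell) \RAS*S \pi(r)$ — this is literally built into the side conditions (``$p^x q^y \RAS+S p^{x-d} r q^{y-e}$'', ``$st \RAS+S r$'', etc.) for rules \prettyref{eq:rule_uv}--\prettyref{eq:rule_gu}, and for rule \prettyref{eq:rule_uu} we have $\pi(p^x p^y) = \pi(p^{x+y})$ as words in $\Sigma^*$ (no reduction needed, using $p$ cyclically reduced so that $p^x p^y$ is already freely reduced when $x,y$ have the same sign, and a genuine $S$-reduction when they have opposite signs — either way $\pi(p^xp^y) \RAS*S \pi(p^{x+y})$). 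Since $\RAS*S$ is a congruence compatible with concatenation and $\pi$ is a monoid homomorphism $\Delta^* \to \Sigma^*$, applying a $T$-rule to a factor of $u$ yields a word $v$ with $\pi(u) \RAS*S \pi(v)$; induction on the number of $T$-steps gives the claim.

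With these in hand, (iii) follows quickly. If $u \RAS*T 1$, then by (ii) $\pi(u) \RAS*S \pi(1) = 1$, hence $\pi(u) =_{F_X} 1$, i.e.\ $u =_{F_X} 1$. Conversely, suppose $u =_{F_X} 1$. The system $T$ is terminating — every rule strictly decreases, say, the pair $(\abs{w}_\Sigma, \abs{w}_\Delta)$ in the lexicographic order, using \prettyref{lem:rewriting_length_bound} (rules \prettyref{eq:rule_uv}--\prettyref{eq:rule_gu} produce an $S$-irreducible right-hand side from a right-hand side that was $S$-reducible, so $\abs{w}_\Sigma$ drops, while rule \prettyref{eq:rule_uu} keeps $\abs{w}_\Sigma$ fixed but drops $\abs{w}_\Delta$; I should double-check the degenerate cases $x+y=0$ or $r=1$, which only help). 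So there is $v \in \IRR(\sS)$ with $u \RAS*T v$. By (ii), $\pi(u) \RAS*S \pi(v)$, and by (i), $\pi(v) \in \IRR(S)$, so $\pi(v) = \red{\pi(u)}$. But $u =_{F_X} 1$ means $\red{\pi(u)} = 1$, hence $\pi(v) = 1$, which forces $v = 1$ (the only preimage of $1$ under $\pi$ in $\Delta^* $, since every letter of $\Delta$ has nonempty $\pi$-image). Therefore $u \RAS*T 1$.

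The main obstacle is not conceptual but bookkeeping: verifying that $T$ is terminating (so that the normal-form argument in the converse direction is legitimate) and that rule \prettyref{eq:rule_uu} genuinely satisfies $\pi(p^xp^y)\RAS*S\pi(p^{x+y})$ in all sign combinations of $x,y$. Both hinge on the preprocessing guarantee that each period lies in $\Omega$ (in particular is cyclically reduced and primitive), together with the length bounds of \prettyref{lem:rewriting_length_bound}; once those are invoked the rest is a routine induction.
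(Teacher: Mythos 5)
Your proof is correct and follows essentially the same route as the paper's: project via $\pi$, observe that $T$-steps project to $S$-reduction sequences and that $\pi(\IRR(T)) = \IRR(S)$, and conclude by comparing a $T$-irreducible descendant of $u$ with the unique freely reduced form of $\pi(u)$. The only difference is that you explicitly verify termination of $T$ (via the lexicographic measure $(\abs{\cdot}_\Sigma,\abs{\cdot}_\Delta)$) so that $u$ is guaranteed to reach some $T$-irreducible word~-- a point the paper's argument leaves implicit~-- which is a correct and harmless addition rather than a genuinely different approach.
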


\fi

\ifFullVersion
For a symbol $s \in \Delta''$ define $\len(s) = \abs{s}_\Sigma$ and for $p^x \in \Delta'$ set $\len(p^x) = \abs{p}_\Sigma$. For 
$u = a_1\cdots a_m \in \Delta^*$ with $a_i \in \Delta$ for $1 \leq i \le m$ we
 define $\len(u) = \sum_{i=1}^{m} \len(a_i)$
and $\mu(u) = \max \{ \len(a_i) \mid 1 \le i \leq m \}$.
Thus, $\len(u)$ is the number of letters from $\Sigma$ required to write down $u$ ignoring the binary exponents. 

\begin{lemma}\label{lem:length_increase}
	Let $u \in \Delta^*$. 
	If $u \RAS{k}{\sS} v$, then $\len(v) \leq \len(u) +  2k\mu(u) \le (2k+1) \len(u)$. 
\end{lemma}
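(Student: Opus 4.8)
The plan is to induct on $k$ along the given rewriting sequence $u = u_0 \RAS{}{\sS} u_1 \RAS{}{\sS} \cdots \RAS{}{\sS} u_k = v$, carrying along a second invariant that keeps the ``base periods'' under control. Precisely, besides the bound (b) $\len(u_i) \le \len(u) + 2i\mu(u)$, I would maintain the invariant (a): \emph{every letter from $\Delta'$ occurring in $u_i$ has the form $p^z$, where $p \in \Omega$ is the base of some $\Delta'$-letter of the input word $u$.} Invariant (a) follows by inspecting the five rewrite rules: on the right-hand sides every letter of $\Delta'$ — namely $p^{x+y}$ in \eqref{eq:rule_uu}, the letters $p^{x-d}$ and $q^{y-e}$ in \eqref{eq:rule_uv}, $p^{x-d}$ in \eqref{eq:rule_ug} and \eqref{eq:rule_gu}, and none in \eqref{eq:rule_gh} — has the same base as a letter of $\Delta'$ on the left-hand side, so rewriting never introduces a new base. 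Consequently $\len(p^z) = \abs{p}_\Sigma \le \mu(u)$ for every letter $p^z \in \Delta'$ that appears anywhere in the derivation.

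For the inductive step I would analyse the single step $u_i \RAS{}{\sS} u_{i+1}$, which rewrites one factor and leaves the rest of the word unchanged; hence it suffices to bound the increase of $\len$ on that factor by $2\mu(u)$. Let $p^x$ (and $q^y$, when present) be the $\Delta'$-operand(s) of the applied rule; by (a) we have $\abs{p}_\Sigma, \abs{q}_\Sigma \le \mu(u)$. Rule \eqref{eq:rule_uu} turns a factor of $\len$ equal to $2\abs{p}_\Sigma$ into one of $\len$ at most $\abs{p}_\Sigma$, and rule \eqref{eq:rule_gh} does not increase $\len$ since $\RAS{}{S}$ only deletes letters. In \eqref{eq:rule_ug} and \eqref{eq:rule_gu} the new letter $r = p's'$ satisfies $\abs{r}_\Sigma \le \abs{p}_\Sigma + \abs{s}_\Sigma$, so the factor's $\len$ passes from $\abs{p}_\Sigma + \abs{s}_\Sigma$ to at most $\abs{p}_\Sigma + \abs{r}_\Sigma$, an increase of at most $\abs{p}_\Sigma \le \mu(u)$. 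In \eqref{eq:rule_uv}, \prettyref{lem:rewriting_length_bound} gives $\abs{r}_\Sigma \le \abs{p}_\Sigma + \abs{q}_\Sigma$, so the factor's $\len$ passes from $\abs{p}_\Sigma + \abs{q}_\Sigma$ to at most $\abs{p}_\Sigma + \abs{r}_\Sigma + \abs{q}_\Sigma$, an increase of at most $\abs{p}_\Sigma + \abs{q}_\Sigma \le 2\mu(u)$. Thus in every case $\len(u_{i+1}) \le \len(u_i) + 2\mu(u) \le \len(u) + 2(i+1)\mu(u)$, and since (a) is also preserved the induction goes through; taking $i = k$ yields $\len(v) \le \len(u) + 2k\mu(u)$.

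For the second inequality I would simply note that $\mu(u) \le \len(u)$, because $\mu(u)$ is the maximum of the nonnegative numbers $\len(a_j)$ over the letters $a_j$ of $u$ whereas $\len(u)$ is their sum (if $u = 1$ there is nothing to prove, since the empty word is $\sS$-irreducible); hence $\len(u) + 2k\mu(u) \le (2k+1)\len(u)$.

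The one point that needs care — and the reason I track invariant (a) rather than just $\mu(u_i)$ — is that $\mu$ itself is \emph{not} monotone along a derivation: the letter $r$ produced by \eqref{eq:rule_uv} may have $\abs{r}_\Sigma$ almost as large as $\abs{p}_\Sigma + \abs{q}_\Sigma$, and two such long ``junk'' letters could later be merged again by \eqref{eq:rule_gh}, so a naive estimate in terms of $\mu$ of the current word only gives $\len(v) \le 3^k \len(u)$. Pinning the $\Delta'$-operands to the original base lengths via (a) is exactly what keeps the per-step increase at $2\mu(u)$; everything else is the routine rule-by-rule length bookkeeping sketched above.
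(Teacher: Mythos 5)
Your proof is correct and follows essentially the same route as the paper: the paper fixes the set $\Gamma$ of $\Delta'$-letters whose bases occur in $u$, observes that every word in the derivation lies in $(\Gamma\cup\Delta'')^*$ (your invariant (a)), and then bounds the per-step increase rule by rule — no increase for \eqref{eq:rule_uu} and \eqref{eq:rule_gh}, at most $\mu(u)$ for \eqref{eq:rule_ug}/\eqref{eq:rule_gu}, and at most $2\mu(u)$ for \eqref{eq:rule_uv}. Your remark that $\mu$ of the current word is not monotone, so one must pin the estimate to the bases of the input word, is exactly the point the paper's $\Gamma$/$M$ device addresses.
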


\begin{proof}
        Let $\Gamma \sse \Delta'$ be the set of all $p^x$ such that $p^y$ appears as a letter in $u$ for some $y \in \Z$. Then we have $v \in (\Gamma \cup \Delta'')^*$.
	Let $M = \max\{ \abs{p}_\Sigma \mid p^x \in \Gamma\}$. Clearly, we have $M \leq  \mu(u)$.
	Now we prove the lemma by induction on $k$. For $k=0$ we have $\len(v) = \len(u)$ and we are done.
	
	Rewriting rules \prettyref{eq:rule_uu} and \prettyref{eq:rule_gh} do not increase $\len(\;\!\cdot\;\!)$. 
	An application of rule \prettyref{eq:rule_ug} or \prettyref{eq:rule_gu} increases $\len(\;\!\cdot\;\!)$ by at most $M$ since $p^x$ is in $\Gamma$. Likewise, 
rule \prettyref{eq:rule_uv} increases $\len(\;\!\cdot\;\!)$ by at most $2M$. 
\end{proof}
\fi

\begin{lemma}\label{lem:rewriting_bound}
	Let $u \in \Delta^*$.
	If $u \RAS*{\sS}v$, then $u \RAS{\leq k}{\sS}v$ for $k = 2\abs{u}_\Delta + 4\abs{u}_{\Delta'} \leq 6\abs{u}_\Delta$. 
\end{lemma}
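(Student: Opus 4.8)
The plan is to argue that in any derivation $u \RAS*{\sS} v$ we may eliminate all ``wasted'' steps, and that the surviving steps are bounded in number by a quantity depending only on $\abs{u}_\Delta$ and $\abs{u}_{\Delta'}$. The key structural observation is that every rewrite rule of $\sS$ consumes two adjacent letters of $\Delta$ and produces at most two letters, and that letters from $\Delta'$ can only disappear (by rules \prettyref{eq:rule_uu}, \prettyref{eq:rule_uv}, \prettyref{eq:rule_ug}, \prettyref{eq:rule_gu}) or have their exponent changed, but are never created; likewise, the total number of $\Delta$-letters is non-increasing under every rule except possibly \prettyref{eq:rule_uv} (which is length-preserving on $\Delta$, replacing $p^xq^y$ by $p^{x-d}rq^{y-e}$, i.e.\ at most three letters for two, but at least one letter for two when an exponent hits $0$). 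So I would first set up a potential function on words $w \in \Delta^*$, something like $\Phi(w) = 2\abs{w}_\Delta + 4\abs{w}_{\Delta'}$, and check rule by rule how $\Phi$ changes.

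First I would classify the rules. Rules \prettyref{eq:rule_uu} and \prettyref{eq:rule_gh} strictly decrease $\abs{w}_\Delta$ by $1$ (two letters become one), hence decrease $\Phi$ by at least $2$ — and \prettyref{eq:rule_uu} additionally destroys a $\Delta'$-letter (two become one, possibly zero), so it decreases $\Phi$ by at least $2+4=6$ unless it also merges to a nonzero exponent in which case $\abs{w}_{\Delta'}$ drops by exactly $1$. Rules \prettyref{eq:rule_ug} and \prettyref{eq:rule_gu} replace $p^xs$ by $p^{x-d}r$: the $\Delta'$-count drops by $1$ if $x=d$ and stays the same otherwise, while $\abs{w}_\Delta$ either stays the same ($x\neq d$) or drops by $1$ ($x=d$); so $\Phi$ decreases by at least... hmm, if $x \neq d$ then $\abs{w}_\Delta$ is unchanged (two letters $p^x, s$ become two letters $p^{x-d}, r$) and $\abs{w}_{\Delta'}$ is unchanged — so $\Phi$ does \emph{not} strictly decrease. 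This is the crux of the difficulty: rules \prettyref{eq:rule_uv}, \prettyref{eq:rule_ug}, \prettyref{eq:rule_gu} applied with $x \neq d$ (resp.\ $y \neq e$) leave the ``shape'' $\Delta$-statistics unchanged and only shuffle material between a power and an adjacent reduced word. The main obstacle, therefore, is to show that such ``shuffling'' steps cannot be iterated: after a bounded number of them a letter must actually be consumed.

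To handle this I would argue that two consecutive applications of rules among \prettyref{eq:rule_uv}, \prettyref{eq:rule_ug}, \prettyref{eq:rule_gu} at the same position are pointless: after applying \prettyref{eq:rule_ug} to $p^xs$ we get $p^{x-d}r$ with $r \in \Delta'' \cup \{1\}$ obtained by \emph{fully reducing} $p^xs$ over $\Sigma$ — so $p^{x-d}r \in \IRR(S)$ already, and none of rules \prettyref{eq:rule_uv}, \prettyref{eq:rule_ug}, \prettyref{eq:rule_gu} (whose if-conditions require $\RAS+S$, i.e.\ at least one genuine cancellation) applies again to this adjacent pair; the only way to touch $p^{x-d}$ again is through its \emph{other} neighbour or through rule \prettyref{eq:rule_uu}. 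Consequently, between any two rewrites acting on a given pair of consecutive positions, the $\Delta$-length at that location must have changed, and more precisely: charge each of the at most $2\abs{u}_{\Delta'}$-many shuffling steps to the $\Delta'$-letter it involves, noting each $\Delta'$-letter can be ``touched from the left'' and ``touched from the right'' at most once before it is either consumed or merged. Then the non-shuffling steps (\prettyref{eq:rule_uu}, \prettyref{eq:rule_gh}, and the exponent-killing instances of the others) number at most $\abs{u}_\Delta$ since each strictly decreases $\abs{w}_\Delta$ by $1$ starting from $\abs{u}_\Delta$. I would formalize this by induction on $\abs{u}_\Delta$, peeling off: if the first step is non-length-decreasing it is one of the $\le 2\abs{u}_{\Delta'}$ shuffles (and we argue the two neighbours of each $\Delta'$-letter each generate at most one such), and otherwise $\abs{u}_\Delta$ drops and we apply the induction hypothesis; adding up gives $k \le 2\abs{u}_\Delta + 4\abs{u}_{\Delta'}$, and since trivially $\abs{u}_{\Delta'} \le \abs{u}_\Delta$, also $k \le 6\abs{u}_\Delta$.

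A cleaner alternative I would consider, which avoids delicate bookkeeping, is to show directly that $\RAS*{\sS}$ can always be realized by a derivation that is ``left-to-right confluent'': scan $v$ from left to right maintaining an irreducible prefix, and absorb the next letter of the (projected) word using at most two rule applications per letter of $u$ — one to merge a new $\Delta''$-block, one to handle a $\Delta'$-power — plus one extra per $\Delta'$-letter for the case where a power is split by cancellation on both sides. Each letter of $u$ thus triggers $O(1)$ rewrites with the precise constants $2$ and $4$ falling out of counting the two ends of each power. I expect the honest proof in the paper does exactly this kind of normalization argument; the one genuine thing to be careful about is that rule \prettyref{eq:rule_uv}, producing three $\Delta$-letters from two, does not cause the $\Delta$-length to grow without bound — but it cannot, because the middle letter $r$ lies in $\Delta''$ and, being a fully reduced product, is inert with respect to any further \prettyref{eq:rule_uv}/\prettyref{eq:rule_ug}/\prettyref{eq:rule_gu} on its immediate neighbours, so it only ever participates subsequently in a length-\emph{decreasing} rule \prettyref{eq:rule_gh} or \prettyref{eq:rule_ug}/\prettyref{eq:rule_gu}.
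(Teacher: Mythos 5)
There is a genuine gap, and it sits exactly where you locate the difficulty. Your charging scheme claims that each $\Delta'$-letter can be ``touched from the left'' and ``from the right'' at most once by a shuffling step before it is consumed or merged, because after an application of \prettyref{eq:rule_ug} the resulting pair $p^{x-d}r$ is reduced. But that inertness is not permanent: the barrier $r$ can later be eaten \emph{from its other side}, and the surviving material need not be reduced against $p^{x-d}$. Concretely, with $p=a$ take $u = p^{10}\,(a^{-1}b)\,(b^{-1}a^{-1}b)\,(b^{-1}a^{-1}b)\cdots(b^{-1}a^{-1}b) \in \Delta^*$. Then \prettyref{eq:rule_ug} gives $p^{10}(a^{-1}b) \to p^{9}b$, next \prettyref{eq:rule_gh} gives $b\,(b^{-1}a^{-1}b)\to a^{-1}b$, and now \prettyref{eq:rule_ug} applies to the \emph{same} power again: $p^{9}(a^{-1}b)\to p^{8}b$, and so on. The single $\Delta'$-letter is touched from the right $\Theta(\abs{u}_\Delta)$ times although $\abs{u}_{\Delta'}=1$, so the bound ``at most $2\abs{u}_{\Delta'}$ shuffling steps'' is false. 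The paper bounds these non-length-decreasing applications of \prettyref{eq:rule_ug}/\prettyref{eq:rule_gu} globally, by $\abs{u}_\Delta-1$, via the invariant that each of them destroys a non-reduced two-letter factor and that \emph{no} rule increases the number of non-reduced two-letter factors; your per-letter, per-side accounting has no substitute for this invariant.

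The second missing ingredient is a bound on the number of applications of \prettyref{eq:rule_uv}, the only rule that increases $\abs{\cdot}_\Delta$. In your proposal these are folded into the (unsound) shuffle count, and your closing remark that the freshly created middle letter $r$ is ``inert'' only says something about the later fate of $r$; it does not bound how often \prettyref{eq:rule_uv} can fire, which is what is needed to control both the length increase and the budget for the length-decreasing steps. This is where the paper's real idea lies: the pairs of original powers to which \prettyref{eq:rule_uv} is applied form a non-crossing (laminar) family, hence a forest with at most $\abs{u}_{\Delta'}-1$ leaves and at most $2\abs{u}_{\Delta'}-3$ nodes, giving at most $2\abs{u}_{\Delta'}-3$ such applications; the $\abs{u}_\Delta + 2\abs{u}_{\Delta'}-3$ bound on length-decreasing steps and the $\abs{u}_\Delta-1$ bound above then sum to the stated $k$. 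Your alternative left-to-right normalization sketch does not repair this: besides leaving the same counting issues open, it would only produce a short derivation to one particular normal form, whereas the lemma asserts a short derivation to every reachable $v$.
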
 

\ifFullVersion
\begin{proof}
	Let $n = |u|_{\Delta'}$ and write $u = u_0 p_1^{x_1} u_1 \cdots p_n^{x_n} u_n$ with $u_i \in \Delta''^*$.
	During the rewriting process of $u$, for every pair of letters $p_i^{x_i}$ and $p_j^{x_j}$ in $u$ (for $i <j$), rule \prettyref{eq:rule_uv} can be applied at most once. Moreover, if $i < k < j < \ell$, then either a rule of type  \prettyref{eq:rule_uv} can be applied to  $p_i^{x_i}$ and  $p_j^{x_j}$ or to $p_k^{x_k}$ and  $p_\ell^{x_\ell}$ but not to both. This is because if the rule is applied to $p_i^{x_i}$ and  $p_j^{x_j}$, then everything in between has to cancel before~-- in particular $p_k^{x_k}$~--, so there is no way for it to be part of a reduction involving $p_k^{x_k}$ and  $p_\ell^{x_\ell}$. Let $P$ be the set of all pairs
	$(i,j)$ such that rule \prettyref{eq:rule_uv} is applied to $p_i^{x_i}$ and  $p_j^{x_j}$ in our reduction. From the above consideration, it follows
	that we obtain a forest with nodes in $P$, where $(k,l)$ is an ancestor of $(i,j)$ if the interval $[k,l]$ is contained $[i,j]$. The number of leaves
	of this forest is at most $n-1$. Hence, the number of nodes of the forest (and hence the number of pairs in $P$) is at most $2n-3$.
	This gives us at most $2n-3$ applications of rules of the form \prettyref{eq:rule_uv}. 
	Since \prettyref{eq:rule_uv} is the only rule that increases $\abs{\;\!\cdot\;\!}_\Delta$ (by one), it follows that 
	$\abs{\;\!\cdot\;\!}_\Delta$ can increase at most $2n-3$ times, and each time it increases by at most one.
		
	Rules  \prettyref{eq:rule_ug} and \prettyref{eq:rule_gu} either decrease $\abs{\;\!\cdot\;\!}_\Delta$ or reduce the number of non-reduced two-letter factors of the current word. 
	Since no application of any rule increases the number of non-reduced two-letter factors, there can be at most $\abs{u}_\Delta - 1$ application of non-length-decreasing applications of rules \prettyref{eq:rule_ug} and \prettyref{eq:rule_gu} ($\abs{u}_\Delta - 1$ is the number of factors of length two in the initial word).	
	
	The other rules \prettyref{eq:rule_uu} and \prettyref{eq:rule_gh} decrease $\abs{\;\!\cdot\;\!}_\Delta$.
	Since the initial length is $\abs{u}_\Delta $ and there are at most $2n-3$ applications of the only length-increasing rule \prettyref{eq:rule_uv}, at most $\abs{u}_\Delta + 2n-3$ applications of rules \prettyref{eq:rule_uu} and \prettyref{eq:rule_gh} and length-decreasing applications of rules \prettyref{eq:rule_ug} and \prettyref{eq:rule_gu} can occur. 

	Summing up we obtain $2n-3 + \abs{u}_\Delta - 1 + \abs{u}_\Delta + 2n-3 \leq 2\abs{u}_\Delta + 4\abs{u}_{\Delta'}  $ applications of rewriting rules.
\end{proof}
\else
\begin{proof}[Proof sketch]
The proof is based on the fact that at most $2|u|_{\Delta'}-3$ applications of rules of the form \prettyref{eq:rule_uv} can occur. These are the only length increasing rules. All other rules either decrease the number of non-reduced two-letter factors of $u$ (this can happen at most  $\abs{u}_\Delta - 1$ times) or decrease the length of $u$ (this can happen at most $\abs{u}_\Delta + 2|u|_{\Delta'}-3$ times). 
\end{proof}
\fi

Consider a word $u \in \Delta^*$ 
and $p \in \Omega$. Let $\Delta_p = \{ p^x \mid x \in \Z \setminus \{0\}\}$.
We can write $u$ uniquely as $u = u_0 p^{y_1} u_1 \cdots p^{y_m} u_m$ with 
$u_i \in (\Delta \setminus \Delta_p)^*$. We define $\eta_p^i(u)= \sum_{j=1}^{i}y_j$ and $\eta_p(u)=\eta_p^m(u)$.
\ifFullVersion
\begin{lemma}\label{lem:distance_easy}
	Let $u \RAS{}{\sS} v$ for $u,v \in \Delta^*$. 
Then for all $v' \in \Delta^*$ with
	$v' \pref v$ there is some $u' \in \Delta^*$ with $u' \pref u$ and
\[\abs{\eta_p(u') - \eta_p(v')}\leq \mu(u) \le \len(u).\]
	Moreover, if the applied rule is not of the form \prettyref{eq:rule_uu}, then  for all $i$ we have \[\abs{\eta_p^i(u) - \eta_p^i(v)} \leq \len(u).\]
\end{lemma}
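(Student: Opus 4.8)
The plan is to reduce the statement to a case analysis over the rewrite rule applied in the single step $u \RAS{}{\sS} v$. Write $u = xLy$ and $v = xRy$, where $L\to R$ is the concrete instance of one of the rules \prettyref{eq:rule_uu}--\prettyref{eq:rule_gu} and $x,y\in\Delta^*$ is the common context. Two elementary facts will be used throughout: block decompositions are compatible with concatenation at letter boundaries, so $\eta_p(w_1 w_2) = \eta_p(w_1) + \eta_p(w_2)$ for all $w_1,w_2\in\Delta^*$; and, by \prettyref{lem:rewriting_length_bound}, the exponent corrections $d$, $e$ occurring in the rules satisfy $\abs{d},\abs{e}\le\max(\abs{\beta}_\Sigma,\abs{s}_\Sigma)\le\mu(u)$, since $\beta^y$ (resp.\ $s$) is a letter of $u$, and of course $\mu(u)\le\len(u)$. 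Note also that $L$ and $R$ each contain at most one letter from $\Delta_p$, the only exception being $L=p^xp^y$ when the rule is \prettyref{eq:rule_uu} applied to $p$; and that the fresh $\Delta''$-letter $r$ created by rules \prettyref{eq:rule_uv}, \prettyref{eq:rule_ug}, \prettyref{eq:rule_gu} is never counted by $\eta_p$.

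For the first assertion, fix $v'\pref v$ and split by the position of $v'$. If $v'\pref x$, take $u'=v'$, so that $\eta_p(u')=\eta_p(v')$. If $v'=xRy'$ with $y'\pref y$, take $u'=xLy'$; then $\eta_p(u')-\eta_p(v')=\eta_p(L)-\eta_p(R)$, and a rule-by-rule inspection shows this lies in $\{0,d,-d,e,-e\}$ (it is $0$ unless a period of the applied instance equals $p$, in which case it is $\pm d$ or $\pm e$). If $v'=xR'$ with $R'\pref R$, take $u'=x$ when $R'$ does not reach past the unique $\Delta_p$-letter of $R$ and $u'=xL$ otherwise: in the first sub-case $\eta_p(u')=\eta_p(x)=\eta_p(v')$; in the second, $\eta_p(v')=\eta_p(x)+c$ and $\eta_p(u')=\eta_p(x)+c'$, where $c$, $c'$ are the exponents of the unique $\Delta_p$-letters of $R$ and $L$, so the difference is $c'-c\in\{0,\pm d,\pm e\}$. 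In every case $\abs{\eta_p(u')-\eta_p(v')}\le\mu(u)\le\len(u)$.

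For the second assertion, assume the applied rule is not \prettyref{eq:rule_uu}. Then $L$ carries at most one $\Delta_p$-letter, which, on passing to $R$, either keeps its place with its exponent shifted by $\pm d$ or $\pm e$, or vanishes — the latter only if the shifted exponent is $0$, in which case \prettyref{lem:rewriting_length_bound} forces the original exponent itself to satisfy $\abs{\cdot}\le\mu(u)$. All other $\Delta_p$-letters of $u$ are untouched and keep their relative order. Matching the $\Delta_p$-letters of $v$ with those of $u$ by the obvious order-preserving injection (a bijection unless one letter was deleted), every partial sum $\eta_p^i(v)$ then differs from the matched partial sum of $u$ by at most that single shift/deletion, hence by at most $\mu(u)\le\len(u)$, which gives $\abs{\eta_p^i(u)-\eta_p^i(v)}\le\len(u)$. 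Excluding \prettyref{eq:rule_uu} is essential: merging $p^xp^y\to p^{x+y}$ does not delete a $\Delta_p$-letter but fuses two, and the partial sum of $u$ sitting between them differs from every partial sum of $v$ by $\abs{x}$, which is unbounded.

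I expect the main obstacle to be organisational rather than conceptual: carrying out the per-rule verification $\eta_p(L)-\eta_p(R)\in\{0,\pm d,\pm e\}$, pinning down for each rule which prefixes of $R$ have already passed its $\Delta_p$-letter, and keeping the degenerate sub-cases under control (a shifted exponent equal to $0$, an empty leftover $r$, empty context $x$ or $y$). The only place that needs an actual idea is the collapse of a $\Delta_p$-block, where one must invoke \prettyref{lem:rewriting_length_bound} to bound the vanished exponent and must arrange the order-preserving matching so that the phrase ``for all $i$'' stays meaningful when $u$ and $v$ have different numbers of $\Delta_p$-letters.
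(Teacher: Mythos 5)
Your proof is correct and follows essentially the same route as the paper's: a case distinction on where the prefix boundary lies relative to the single rewrite site, with \prettyref{lem:rewriting_length_bound} bounding every exponent change by $\mu(u)$, and, for the second claim, the correspondence between the $p$-powers of $u$ and $v$ when the applied rule is not of the form \prettyref{eq:rule_uu}. Your explicit discussion of a $p$-power vanishing (its exponent becoming $0$, which can happen only when that exponent is at most $\mu(u)$) is in fact more careful than the paper's one-line assertion that the $p$-powers of $u$ and $v$ correspond bijectively; both arguments then read ``for all $i$'' through that (possibly index-shifted) matching.
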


\begin{proof}
	\Wlog we can assume that $v' \in \Delta^*\Delta_p$ (appending/deleting letters from $\Delta\setminus\Delta_p$ does not change $\eta_{p}(v')$). There are three possibilities: either the right hand side of the applied rule is only partially in $v'$ or it is entirely in $v'$ or entirely outside 
	$v'$. In the last case we can take $u'=v'$.
	
	In the second case, there is some $u' \in \Delta^*\Delta_p$ with
	$u' \pref u$ and
	$u' \RAS{}{\sS} v'$. Since all rules change $\eta_p(\cdot)$ by at most $\mu(\cdot)$ (by \prettyref{lem:rewriting_length_bound}), we have $\abs{\eta_p(u') - \eta_p(v')}\leq 
	\mu(u)$.
	
	In the third case, the right hand side of a rule of the form \prettyref{eq:rule_uv} or \prettyref{eq:rule_ug} overlaps $v'$.
	Let us write $v' = v'' p^y$. 
	This means that there exist
	$p^x \in \Delta$ and words $\alpha, \beta \in \Delta^{\leq 2}$ such that
	$p^x \alpha \to p^{y} \beta$ is a rule of $T$, $v'' p^{y} \beta$ is a prefix of $v$ and 
	$v'' p^{x} \alpha$ is a prefix of $u$. Let $u' = v'' p^x$.
	Since $|x-y| \le \mu(u)$ (by \prettyref{lem:rewriting_length_bound}), we have $\abs{\eta_p(u') - \eta_p(v')}\leq \mu(u)$.

In the case that the applied rule is not of the form \prettyref{eq:rule_uu}, then every $p$-power in $v'$ corresponds to a unique $p$-power in $u'$ and vice-versa. Thus, the second statement follows.
\end{proof}

\else
By \prettyref{lem:rewriting_length_bound} we know that all rules of $T$ change $\eta_p(\cdot)$ by at most $\len(u)$. We can use this observation in order to show the next lemma by induction on $k$.
\fi

\begin{lemma}\label{lem:distance}
	Let $u \RAS{k}{\sS}v$. Then for all $v'\pref v$ with $v' \in \Delta^*$ there is some $u'\in \Delta^*$ with $u' \pref u$ and 
\ifFullVersion
	\[\abs{\eta_p(u') - \eta_p(v')}\leq (k+1)^2\len(u).\]
\else
	$\abs{\eta_p(u') - \eta_p(v')}\leq (k+1)^2\len(u).$
\fi
\end{lemma}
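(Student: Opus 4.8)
The plan is to prove \prettyref{lem:distance} by induction on the number $k$ of rewrite steps, peeling off one step at a time and using the single-step estimate \prettyref{lem:distance_easy} together with the length-growth bound \prettyref{lem:length_increase}. The base case $k=0$ is trivial: here $u=v$, so for any prefix $v'\pref v$ we may take $u'=v'$, and $\abs{\eta_p(u')-\eta_p(v')}=0\le\len(u)=(0+1)^2\len(u)$.

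For the inductive step, suppose $u \RAS{k}{\sS} w \RAS{}{\sS} v$ and let $v'\pref v$ with $v'\in\Delta^*$ be given. First I would handle the \emph{last} step: applying \prettyref{lem:distance_easy} to $w \RAS{}{\sS} v$ yields a prefix $w'\pref w$ with $\abs{\eta_p(w')-\eta_p(v')}\le\len(w)$. Then I would apply the induction hypothesis to the first $k$ steps $u \RAS{k}{\sS} w$ with the prefix $w'$, obtaining $u'\pref u$ with $\abs{\eta_p(u')-\eta_p(w')}\le(k+1)^2\len(u)$. The triangle inequality gives $\abs{\eta_p(u')-\eta_p(v')}\le(k+1)^2\len(u)+\len(w)$.

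It remains to control $\len(w)$. Since $u \RAS{k}{\sS} w$, \prettyref{lem:length_increase} gives $\len(w)\le(2k+1)\len(u)$, and hence
\[
\abs{\eta_p(u')-\eta_p(v')}\le\bigl((k+1)^2+2k+1\bigr)\len(u)=(k^2+4k+2)\len(u)\le(k+2)^2\len(u),
\]
which is precisely the asserted bound for a derivation of length $k+1$. This closes the induction.

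The only genuine subtlety — the ``hard part'' — is bookkeeping rather than any real difficulty: one must peel off the derivation in the right direction (estimate the last step first so that $v'$ can be pulled back to a prefix $w'$ of $w$, and only then invoke the inductive hypothesis on the first $k$ steps), and one must remember that the intermediate word $w$ can be strictly longer than $u$, so the single-step error term is $\len(w)$, not $\len(u)$; this is exactly where \prettyref{lem:length_increase} enters. As an alternative one could also unfold the recursion directly: writing $u=w_0 \RAS{}{\sS} w_1 \RAS{}{\sS} \cdots \RAS{}{\sS} w_k=v$ and pulling a prefix back one step at a time via \prettyref{lem:distance_easy}, the accumulated error is at most $\sum_{i=0}^{k-1}\len(w_i)\le\sum_{i=0}^{k-1}(2i+1)\len(u)=k^2\len(u)\le(k+1)^2\len(u)$, giving the bound directly.
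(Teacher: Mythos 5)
Your proposal is correct and follows essentially the same route as the paper: induction on the number of rewrite steps, peeling off the last step, pulling the prefix back via \prettyref{lem:distance_easy}, bounding the intermediate word's length with \prettyref{lem:length_increase}, and combining via the triangle inequality (the paper's arithmetic is $k^2+2k-1\le(k+1)^2$, yours the equivalent shifted version). Your alternative unfolded summation $\sum_{i=0}^{k-1}(2i+1)\len(u)=k^2\len(u)$ is also fine and even slightly sharper.
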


\ifFullVersion
\begin{proof}
	We proceed by induction on $k$. For $k=0$ the statement is trivial. Let $v' \pref v$ and $u \RAS{k - 1}{\sS} w \RAS{}{\sS} v$. By \prettyref{lem:length_increase}, $\len(w) \leq (2k-1) \len(u)$. Hence, by \prettyref{lem:distance_easy},  there is some $w'\pref w$ with $\abs{\eta_p(v') - \eta_p(w')}\leq (2k-1) \len(u)$. By induction, we know that there is some $u'\in \Delta^*$ with $u' \pref u$ and  $\abs{\eta_p(u') - \eta_p(w')}\leq k^2\len(u)$. Hence, \[\abs{\eta_p(u') - \eta_p(v')}\leq (k^2 + 2k-1)\len(u) \le (k+1)^2\len(u).\qedhere\] 
\end{proof}
\fi

\ifFullVersion
\subsection{The shortened version of a word}\label{sec:shortening}
\else
\subparagraph*{The shortened version of a word.}
\fi

Take a word $u \in \Delta^*$ and $p \in \Omega$ and write $u$ as
$u = u_0 p^{y_1} u_1 \cdots p^{y_m} u_m$ with $u_i \in (\Delta \setminus \Delta_p)^*$ (we are only interested in the case that $p^x$ appears as a letter in $u$ for some $x \in\Z$).
Let $\cC$ be a finite set of finite, non-empty, non-overlapping intervals of integers, \ie we can write 
$\cC = \set{[\ell_j, r_j]}{1 \le j \le k}$ for $k = \abs{\cC}$ and $\ell_j \leq r_j$ for all $j$. We can assume that the intervals are ordered increasingly, \ie we have $r_j < \ell_{j+1}$.
We set $d_j = r_j - \ell_j + 1>0$. 
We say that $u$ is \emph{compatible} with $\cC$ if $\eta_p^i(u) \not\in [\ell_j,r_j]$ for any $i,j$. If $w$ is compatible with $\cC$, we define the \emph{shortened version} $\cS_\cC(u)$ of $u$: for $i \in \smallset{1, \dots, m}$ we set 
\[
C_i = C_i(u) = \begin{cases}
\set{j}{1 \le j \le k, \eta_p^{i-1}(u) < \ell_j\leq r_j < \eta_p^i(u)}  &\text{if } y_i >0\\
\set{j}{1 \le j \le k, \eta_p^i(u) < \ell_j \leq r_j < \eta_p^{i-1}(u)}  &\text{if } y_i <0,
\end{cases}\]
\ie $C_i$ collects all intervals between $\eta_p^{i-1}(u) $ and $ \eta_p^i(u)$. Then $\cS_\cC(u)$ is defined by
%
\begin{align*}
\cS_\cC(u) &= u_0 p^{z_1} u_1 \cdots p^{z_m} u_m \qquad \text{where}\\
z_i &= y_i - \sign(y_i)\cdot\sum_{j\in C_i}d_j =  \begin{cases}
y_i - \sum_{j\in C_i}d_j& \text{if } y_i > 0,\\
y_i + \sum_{j\in C_i}d_j& \text{if } y_i <0. 
\end{cases}
\end{align*}
\ifFullVersion\else
A straightforward computation yields the next lemma: 
\fi
\begin{lemma}\label{lem:z_not_zero}
	For all $i$ we have $z_i\neq 0$ and $\sign(z_i) = \sign(y_i)$. In particular, if $u\in \IRR(T)$, then also $\cS_\cC(u) \in \IRR(T)$.
\end{lemma}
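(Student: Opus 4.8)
\textbf{Plan for the proof of Lemma~\ref{lem:z_not_zero}.}
The statement is a local one about a single index $i$, so I would fix $i$ and analyse $z_i = y_i - \sign(y_i)\cdot\sum_{j\in C_i} d_j$ directly. The key numerical observation is that the quantity $\sum_{j\in C_i} d_j$ counts (with multiplicity) exactly the integers lying in $\bigcup_{j\in C_i}[\ell_j,r_j]$, and each such interval is, by definition of $C_i$, strictly contained between the consecutive partial sums $\eta_p^{i-1}(u)$ and $\eta_p^i(u)$. Hence the total number of integers removed is at most the number of integers strictly between $\eta_p^{i-1}(u)$ and $\eta_p^i(u)$, which is $\abs{y_i} - 1$ (since $y_i = \eta_p^i(u) - \eta_p^{i-1}(u)$). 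Therefore $\sum_{j\in C_i} d_j \le \abs{y_i}-1 < \abs{y_i}$, and in particular $\abs{z_i} = \abs{y_i} - \sum_{j\in C_i}d_j \ge 1 > 0$, which gives $z_i\neq 0$ and $\sign(z_i)=\sign(y_i)$ simultaneously. This is the whole content of the first sentence; the main (mild) obstacle is simply being careful that the intervals in $\cC$ are non-empty, non-overlapping, and that compatibility of $u$ with $\cC$ guarantees the endpoints $\eta_p^{i-1}(u),\eta_p^i(u)$ themselves are not in any interval, so no boundary integer is double-counted — but all of this is built into the hypotheses.

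For the ``in particular'' statement I would argue as follows. Suppose $u\in\IRR(T)$ and consider $\cS_\cC(u) = u_0 p^{z_1} u_1 \cdots p^{z_m} u_m$. We need to check that no rewrite rule of $T$ applies to $\cS_\cC(u)$. Since $u_i\in(\Delta\setminus\Delta_p)^*$ and only the $p$-powers have changed, the only places where reducibility could newly appear are at a two-letter factor involving a $p^{z_i}$. First, adjacency of two $p$-powers is impossible: the $u_i$ separating $p^{z_i}$ from $p^{z_{i+1}}$ is the same word as in $u$, and in $u$ it could not have been empty with $p$-powers on both sides (otherwise rule \prettyref{eq:rule_uu} would apply to $u$, contradicting $u\in\IRR(T)$) — and if it was empty in $u$ and the two powers were separated only by other letters, nothing changed. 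So rule \prettyref{eq:rule_uu} does not apply. For rules \prettyref{eq:rule_uv}, \prettyref{eq:rule_ug}, \prettyref{eq:rule_gu}, which fire on a factor $p^{z_i}a$ or $a p^{z_i}$ with $a\in\Delta''$, the crucial point is that applicability depends only on the \emph{sign} of the exponent (the if-conditions ask for $p'\spref p^{\sign(x)}$ etc. and for some cancellation $p^x a \RAS+S \dots$, whose occurrence is governed by whether a nonempty prefix/suffix of $p^{\pm1}$ cancels with $a$, independent of how large $\abs x$ is once $\abs x\ge 1$). Since $\sign(z_i)=\sign(y_i)$ by the first part and $\abs{z_i}\ge 1$, such a rule applies to $\cS_\cC(u)$ if and only if it already applied to the corresponding factor of $u$; as $u\in\IRR(T)$, none does. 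Hence $\cS_\cC(u)\in\IRR(T)$.

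I expect the genuinely routine part to be the counting inequality in the first paragraph, and the part requiring a little care to be the sign-only dependence of rules \prettyref{eq:rule_uv}--\prettyref{eq:rule_gu} — one should spell out that the premise ``$p^{x}a \RAS+S p^{x-d} r$'' with $r = p's'$, $p'\spref p^{\sign(x)}$ is equivalent to the condition that $a$ begins with (the inverse of) the last letter of $p^{\sign(x)}$, which depends on $x$ only through $\sign(x)$, and similarly at the other end and for rule \prettyref{eq:rule_uv}. Once this is observed, the equivalence ``rule applies to $\cS_\cC(u)$ at position $i$ $\iff$ rule applies to $u$ at position $i$'' is immediate, and the lemma follows.
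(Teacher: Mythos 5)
Your proposal is correct and takes essentially the same approach as the paper: the paper bounds $\sum_{j\in C_i} d_j \le y_i - 1$ by a telescoping estimate using $\ell_j > r_{j-1}$ and the strict containment of the intervals indexed by $C_i$ between $\eta_p^{i-1}(u)$ and $\eta_p^i(u)$, which is your integer-counting argument in slightly different clothing, and then concludes $\abs{z_i}\ge 1$ with the same sign. The paper dispatches the ``in particular'' statement with ``this implies the lemma''; your sign-only analysis of the applicability of rules \prettyref{eq:rule_uu}--\prettyref{eq:rule_gu} simply spells out the step the paper leaves implicit.
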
%
\ifFullVersion
\begin{proof}
Assume that $y_i > 0$ (the other case is completely symmetric) and let $C_i = [\alpha, \beta]$ (as $\cC$ is ordered increasingly, we know that $C_i$ is an interval). Then 
	\begin{align*}
	\sum_{j\in C_i}d_j &=\sum_{j=\alpha}^{\beta} (r_j - \ell_j +1) \\
	&\leq r_\alpha - \ell_\alpha +1 + \sum_{j=\alpha + 1}^{\beta} (r_j - r_{j-1}) \tag{since $\ell_j > r_{j-1}$}\\
	&\leq r_\alpha - \ell_\alpha +1 + r_\beta - r_\alpha \\
	&= r_\beta - \ell_\alpha +1\\ 
	&\leq \eta_p^{i}(u) - \eta_p^{i-1}(u) - 1 = y_i -1.\tag{since $\alpha,\beta \in C_i$}
	\end{align*}
	Thus, $z_i = y_i - \sign(y_i)\cdot\sum_{j\in C_i}d_j \geq 1$. This implies the lemma.
\end{proof}
\fi
\newcommand{\dist}{\operatorname{dist}}%
Furthermore, we define 
\ifFullVersion
\begin{align*}
	\dist_p(u,\cC) = \min\set{\abs{\eta_p^i(u) - x} }{0 \leq i \leq m, x \in [\ell,r] \in \cC}.
\end{align*}
\else
$\dist_p(u,\cC) = \min\set{\abs{\eta_p^i(u) - x} }{0 \leq i \leq m, x \in [\ell,r] \in \cC}$.
\fi
Note that $\dist_p(u,\cC) > 0$ if and only if $u$ is compatible with $\cC$. Moreover, if $\dist_p(u,\cC) = a$,
$v= v_0 p^{z_1} v_1 \cdots p^{z_{m}} v_{m}$, and $\abs{\eta_p^i(u) - \eta_p^i(v)} \leq b$ for all $i \leq m$, then $\dist_p(v,\cC) \geq a - b$.

\ifFullVersion
\begin{lemma}\label{lem:shorten_one_step}
	If  $\dist_p(u,\cC) > \len(u)$ and $u\RAS{}{\sS} v$, then $S_{\cC}(u) \RAS{}{\sS} S_{\cC}(v)$.
\end{lemma}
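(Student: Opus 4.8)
Write $u=u_0p^{y_1}u_1\cdots p^{y_m}u_m$ with $u_i\in(\Delta\setminus\Delta_p)^*$ as in the definition of $\cS_\cC$, and $\cC=\{[\ell_j,r_j]:1\le j\le k\}$, $d_j=r_j-\ell_j+1$. The plan is to show that whichever $T$-rule takes $u$ to $v$, ``the same'' rule (possibly a different instance of a parametrised rule) takes $\cS_\cC(u)$ to $\cS_\cC(v)$.

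First I would record three preliminary facts. (a) $\cS_\cC(v)$ is defined: by~\prettyref{lem:distance_easy} every partial sum $\eta_p^k(v)$ lies within $\len(u)$ of some partial sum of $u$, which is at distance $\ge\dist_p(u,\cC)$ from all intervals of $\cC$; hence $\dist_p(v,\cC)\ge\dist_p(u,\cC)-\len(u)>0$, so $v$ is compatible with $\cC$. (b) For an integer $x$ lying in no interval of $\cC$ put $\Phi(x)=x-\sum_{j:\,r_j<x}d_j$; since no $\eta_p^i(u)$ or $\eta_p^i(v)$ lies in an interval, a telescoping over the definition of the $z_i$ gives $z_i=\Phi(\eta_p^i(u))-\Phi(\eta_p^{i-1}(u))$, hence $\eta_p^i(\cS_\cC(u))=\Phi(\eta_p^i(u))-\Phi(0)$ for all $i$ (and likewise for $v$): on the level of $p$-exponent partial sums, $\cS_\cC$ is post-composition with $\Phi$. (c) The hypothesis $\dist_p(u,\cC)>\len(u)$ yields the \emph{dichotomy} that for every $i$, either $C_i=\emptyset$ (so $z_i=y_i$), or $C_i\ne\emptyset$, in which case $\abs{z_i}\ge 2\len(u)$ (so also $\abs{y_i}\ge 2\len(u)$): if $C_i\ne\emptyset$ the distance hypothesis forces buffers of length $>\len(u)$ on both sides of the intervals of $C_i$ inside the segment from $\eta_p^{i-1}(u)$ to $\eta_p^i(u)$, and subtracting $\sum_{j\in C_i}d_j$ still leaves $\abs{z_i}\ge 2\len(u)$. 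Equivalently, $\Phi$ is the identity up to an additive constant on a $\len(u)$-neighbourhood of every $\eta_p^i(u)$.

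Next I would split on the applied rule. If it acts entirely inside one block $u_i$ — i.e.\ no $p^{y_j}$ appears in its left-hand side, which covers rule~\prettyref{eq:rule_gh} and every instance of the remaining rules whose $\Delta'$-letters are $\kappa$-powers with $\kappa\ne p$ — then $y_1,\dots,y_m$, the partial sums, the sets $C_i$ and the exponents $z_i$ coincide for $u$ and $v$, and since $\cS_\cC$ leaves the blocks $u_i$ untouched the same rule applies at the same spot of $\cS_\cC(u)$ and yields $\cS_\cC(v)$. Otherwise the rule involves one letter $p^{y_i}$ (an instance of~\prettyref{eq:rule_uv},~\prettyref{eq:rule_ug} or~\prettyref{eq:rule_gu}, changing $y_i$ to $y_i-\delta$ with $\abs{\delta}\le\len(u)$ by~\prettyref{lem:rewriting_length_bound}) or two adjacent ones $p^{y_i}p^{y_{i+1}}$ (an instance of~\prettyref{eq:rule_uu}). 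Pushing the partial sums of $v$ through $\Phi$ as in (b), and using that $\Phi$ is locally affine near the $\eta_p^i(u)$, one checks the exponents of $\cS_\cC(v)$ arise from those of $\cS_\cC(u)$ by exactly the same surgery: in the one-letter case $z_i\mapsto z_i-\delta$ (and if $y_i-\delta=0$ then $\abs{y_i}\le\len(u)$, which by the distance hypothesis forces $C_i=\emptyset$, so $z_i=y_i$ and $z_i-\delta=0$ too), and in the merging case $p^{z_i}p^{z_{i+1}}\mapsto p^{z_i+z_{i+1}}$ with $z_i+z_{i+1}$ equal to the shortened exponent of the merged power (and $z_i+z_{i+1}=0$ exactly when $y_i+y_{i+1}=0$, so the two pictures agree even when the merged power vanishes). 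Finally one checks this surgery is a legal $T$-step on $\cS_\cC(u)$: the side conditions of~\prettyref{eq:rule_uv},~\prettyref{eq:rule_ug},~\prettyref{eq:rule_gu} only constrain the $\Sigma$-word $r$ and the cancellation depth $\delta$, which are unchanged since the cancellation reaches at most $\len(u)$ copies into $p^{y_i}$ (by~\prettyref{lem:rewriting_length_bound}) while the dichotomy makes $p^{z_i}$ at least that long, and $\sign z_i=\sign y_i$ (\prettyref{lem:z_not_zero}) keeps the prefix/suffix conditions valid. Hence $\cS_\cC(u)\RAS{}{\sS}\cS_\cC(v)$.

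The step I expect to be the main obstacle is the bookkeeping in the sub-cases where a $p$-power is destroyed (an exponent reaching $0$) or two $p$-powers merge: there $m$ drops, the index sets $C_i$ get reshuffled, and one must verify both that the surviving shortened exponents still telescope correctly through $\Phi$ and that a surviving ``short'' power $p^{z_i}$ cannot interact with a rule differently than $p^{y_i}$ did. The potential-function identity $\eta_p^i(\cS_\cC(u))=\Phi(\eta_p^i(u))-\Phi(0)$ together with the dichotomy are precisely what make these sub-cases go through uniformly; carrying them out for the at most four relevant rule shapes is the bulk of the otherwise routine verification.
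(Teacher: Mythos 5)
Your proposal is correct, and its skeleton is the same as the paper's: first establish that $v$ is still compatible with $\cC$ via \prettyref{lem:distance_easy}, then do a case analysis on which rule of $T$ was applied, with the rules \prettyref{eq:rule_uv}--\prettyref{eq:rule_gu} perturbing a single $p$-exponent by at most $\len(u)$ and rule \prettyref{eq:rule_uu} adding two exponents. The genuine difference is in the bookkeeping: the paper tracks the sets $C_i$ directly and, in the \prettyref{eq:rule_uu} case, verifies $\tilde z_i = z_i + z_{i+1}$ by separate computations for $y_i+y_{i+1}=0$, equal signs ($C_i(v)=C_i(u)\cup C_{i+1}(u)$) and opposite signs ($C_i(v)=C_i(u)\setminus C_{i+1}(u)$), whereas your collapse map $\Phi$ with the telescoping identity $\eta_p^i(\cS_\cC(u))=\Phi(\eta_p^i(u))-\Phi(0)$, together with the observation that $\Phi$ is a translation on $\len(u)$-neighbourhoods of the partial sums of $u$, yields all of these subcases (including the vanishing of a power) in one stroke; this is a cleaner and less error-prone route to the same facts. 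Your proposal is also more explicit than the paper on one point the paper compresses into ``we can apply the same rule of $T$ to $S_{\cC}(u)$'': via the dichotomy ($C_i=\emptyset$, or $\abs{z_i}\ge 2\len(u)$ while the cancellation depth is at most $\len(u)$ by \prettyref{lem:rewriting_length_bound}, with $\sign(z_i)=\sign(y_i)$ by \prettyref{lem:z_not_zero}) you justify that the rule instance with the shortened exponent still satisfies the side conditions of \prettyref{eq:rule_uv}--\prettyref{eq:rule_gu}. In short: same strategy, a different and arguably tidier mechanism for the exponent arithmetic, plus an explicit applicability check that the paper leaves implicit.
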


\begin{proof}
Notice that, in particular, $u$ is compatible with $\cC$. Moreover, \prettyref{lem:distance_easy} implies that $\dist_p(v,\cC) > 0$; so also $v$ is compatible with $\cC$. Let $u=u_0 p^{y_1} u_1 \cdots p^{y_m} u_m$ and $\cS_\cC(u) = u_0 p^{z_1} u_1 \cdots p^{z_m} u_m$.
	
	We distinguish which of the rules of $\sS$ is applied.
	When applying one of the rules \prettyref{eq:rule_uv}--\prettyref{eq:rule_gu}, we have $C_i(u) = C_i(v)$ for all $i$ because by \prettyref{lem:distance_easy} we have $\abs{\eta_p^i(u) - \eta_p^i(v)} \leq \len(u)$. Thus, $\dist_p(v,\cC) \geq1$ and the shortening process does the same on $v$ as on $u$. This means that we can apply the same rule of $T$ to $S_{\cC}(u)$ obtaining $S_{\cC}(u) \RAS{}{\sS} S_{\cC}(v)$.
	
	 Consider now the case that the applied rule is of the form \prettyref{eq:rule_uu}, \ie $p^{y_i}p^{y_{i+1}} \to p^{y_i + y_{i+1}}$ and we have 
	\begin{align*}
	u=u_0 p^{y_1} u_1 \cdots p^{y_{i-1}} u_{i-1} p^{y_i} p^{ y_{i+1}}u_{i+1} p^{y_{i+2}} u_{i+2} \cdots p^{y_m} u_m,\\
	v=u_0 p^{y_1} u_1 \cdots p^{y_{i-1}} u_{i-1} p^{y_i + y_{i+1}}u_{i+1} p^{y_{i+2}} u_{i+2} \cdots p^{y_m} u_m. 
	\end{align*}	
	First assume that $y_i + y_{i+1}=0$, i.e., 
	$v = u_0 p^{y_1} u_1 \cdots p^{y_{i-1}} u_{i-1}u_{i+1} p^{y_{i+2}} u_{i+2}  \cdots p^{y_m} u_m$.
	We have $C_j(v) = C_j(u)$ for $j < i$ and $C_j(v) = C_{j+2}(u)$ for $j \ge i$.
	We obtain
	\[
	\cS_\cC(v)= u_0 p^{z_1} u_1 \cdots p^{z_{i-1}} u_{i-1}u_{i+1} p^{z_{i+2}} u_{i+2}  \cdots p^{z_m} u_m.
	\]
	Moreover, $C_i(u)=C_{i+1}(u)$, which yields $z_i = -z_{i+1}$, i.e., $z_i + z_{i+1}=0$.
	This yields $\cS_\cC(u) \RAS{}{\sS} S_{\cC}(v)$.

	Now assume that $y_i + y_{i+1} \neq 0$.
	We have $C_j(v) = C_j(u)$ for $j < i$ and $C_j(v) = C_{j+1}(u)$ for $j > i$. Hence, 
	\[ \cS_\cC(v)=u_0 p^{z_1} u_1 \cdots p^{z_{i-1}} u_{i-1} p^{\tilde z_i} u_{i+1} p^{z_{i+2}} u_{i+2} \cdots p^{z_m} u_m,\] where $\tilde z_i = y_i+ y_{i+1} - \sign(y_i+y_{i+1})\cdot\sum_{j\in C_i(v)}d_j$.
	
	For $C_i(v)$ there are two possibilities: either $y_i$ and $y_{i+1}$ have the same sign or opposite signs. If they have the same sign, then $C_i(v) = C_i(u) \cup C_{i+1}(u)$ and we obtain
	\begin{align*}
	\tilde z_i  &=y_i+ y_{i+1} - \sign(y_i+y_{i+1})\cdot\!\!\!\sum_{j\in C_i(v)}d_j\\
	&= y_i - \sign(y_i)\cdot\!\!\!\sum_{j\in C_i(u)}d_j + y_{i+1} - \sign(y_{i+1})\cdot\!\!\!\!\!\sum_{j\in C_{i+1}(u)}d_j\\
	&= z_i + z_{i+1}.
	\end{align*}
	Hence, $S_{\cC}(u) \RAS{}{\sS} S_{\cC}(v)$.
	Now assume that $y_i$ and $y_{i+1}$ have opposite signs and that $\abs{y_i} > \abs{y_{i+1}}$ and $y_i > 0$ (the other cases are symmetric). Then $C_i(v) = C_i(u) \setminus C_{i+1}(u)$. Thus, 
	\begin{align*}
	\tilde z_i  &=y_i+ y_{i+1} - \sign(y_i+y_{i+1})\cdot\!\!\!\sum_{j\in C_i(v)}d_j\\
	&=y_i+ y_{i+1} - \sign(y_i)\cdot\!\!\!\!\!\!\!\!\!\sum_{j\in  C_i(u) \setminus C_{i+1}(u)}d_j\\
	&= y_i - \sign(y_i)\cdot\!\!\!\sum_{j\in C_i(u)}d_j + y_{i+1} - \sign(y_{i+1})\cdot\!\!\!\!\!\!\sum_{j\in C_{i+1}(u)}d_j\\
	&= z_i + z_{i+1}.
	\end{align*}
	Hence, also in this case, we obtain $S_{\cC}(u) \RAS{}{\sS} S_{\cC}(v)$.
\end{proof}
\fi

\begin{lemma}\label{lem:shorten}
	If $\dist_p(u,\cC) > (k+1)^2\len(u)$ and $u\RAS{k}{\sS} v$, then $S_{\cC}(u) \RAS{k}{\sS} S_{\cC}(v)$.
\end{lemma}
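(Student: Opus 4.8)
The plan is to prove the statement by induction on $k$, with Lemma~\ref{lem:shorten_one_step} doing the actual work in the inductive step; the only care needed is the bookkeeping of lengths and of the displacement of the partial sums $\eta_p^i$. For $k=0$ there is nothing to do, since then $u=v$ and hence $S_{\cC}(u)=S_{\cC}(v)$. The crucial decision for the inductive step is to peel off the \emph{last} rewrite step rather than the first: I would factor the derivation as $u\RAS{k-1}{\sS}w\RAS{}{\sS}v$, so that the sub-derivation to which the induction hypothesis is applied still starts at $u$; this way all the bounds remain expressed in terms of $\len(u)$ and $\dist_p(u,\cC)$, which are fixed, rather than in terms of quantities that have already grown along the derivation.

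For the inductive step, since $(k+1)^2>k^2$ the hypothesis $\dist_p(u,\cC)>(k+1)^2\len(u)>k^2\len(u)$ is exactly what the induction hypothesis requires for $u\RAS{k-1}{\sS}w$, and it yields $S_{\cC}(u)\RAS{k-1}{\sS}S_{\cC}(w)$. It then remains to apply Lemma~\ref{lem:shorten_one_step} to $w\RAS{}{\sS}v$, for which I must verify $\dist_p(w,\cC)>\len(w)$. Here I would invoke Lemma~\ref{lem:length_increase} to get $\len(w)\le(2(k-1)+1)\len(u)=(2k-1)\len(u)$, and Lemma~\ref{lem:distance} (applied to $u\RAS{k-1}{\sS}w$) to produce, for each $i$, a prefix of $u$ whose $\eta_p$-value is within $k^2\len(u)$ of $\eta_p^i(w)$. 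Since every $\eta_p$-value of a prefix of $u$ equals some $\eta_p^{i'}(u)$, it lies at distance $>(k+1)^2\len(u)$ from every interval of $\cC$ by the hypothesis, hence $\eta_p^i(w)$ lies at distance $>(k+1)^2\len(u)-k^2\len(u)=(2k+1)\len(u)\ge(2k-1)\len(u)\ge\len(w)$ from all of them. As $i$ is arbitrary (with $i=0$ handled by the empty prefix), this gives $\dist_p(w,\cC)>\len(w)$, so Lemma~\ref{lem:shorten_one_step} yields $S_{\cC}(w)\RAS{}{\sS}S_{\cC}(v)$; concatenating with $S_{\cC}(u)\RAS{k-1}{\sS}S_{\cC}(w)$ gives $S_{\cC}(u)\RAS{k}{\sS}S_{\cC}(v)$.

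I do not expect a genuine obstacle at this point: Lemma~\ref{lem:shorten_one_step} already carries the combinatorial heart of the argument (matching up the interval-crossing sets $C_i$ of $u$ and $v$ after a single rewrite). The only point that needs attention is the arithmetic that the quadratic budget $(k+1)^2\len(u)$ beats the linear length growth $(2k-1)\len(u)$ while still leaving room for the single-step slack $\len(w)$ — which is precisely what forces the "last-step'' direction of the induction and the exponent $2$ in $(k+1)^2$ — together with the routine identification of the prefixes of $w$ that realize the partial sums $\eta_p^i(w)$ with the quantities appearing in $\dist_p(w,\cC)$.
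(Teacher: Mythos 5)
Your proposal is correct and follows essentially the same route as the paper: induction on $k$, peeling off the last rewrite step, using Lemma~\ref{lem:distance} to bound $\dist_p(w,\cC)$ from below and then invoking Lemma~\ref{lem:shorten_one_step}. In fact you are slightly more careful than the printed proof, which only notes $\dist_p(w,\cC) > \len(u)$, whereas you explicitly combine $(2k+1)\len(u)$ with the bound $\len(w)\le(2k-1)\len(u)$ from Lemma~\ref{lem:length_increase} to verify the hypothesis $\dist_p(w,\cC)>\len(w)$ exactly as Lemma~\ref{lem:shorten_one_step} requires.
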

\ifFullVersion
\begin{proof}
We prove the lemma by induction on $k$. The case $k=0$ is trivial.
Now assume that $\dist_p(u,\cC) > (k+1)^2\len(u)$ and $u \RAS{k-1}{\sS} w \RAS{}{\sS} v$. 
Induction yields $S_{\cC}(u) \RAS{k-1}{\sS} S_{\cC}(w)$.
By \prettyref{lem:distance},
every number $\eta_{p}^i(w)$ has distance at most $k^2\len(u)$ from a number $\eta^j_{p}(u)$ (for some $j$).
Hence, we have 
\[
\dist_p(w,\cC) \geq \dist_p(u,\cC) - k^2\len(u) > ( (k+1)^2 -  k^2 )\len(u) > \len(u).
\]
 Thus, \prettyref{lem:shorten_one_step} implies $S_{\cC}(w) \RAS{}{\sS} S_{\cC}(v)$ and hence, $S_{\cC}(u) \RAS{k}{\sS} S_{\cC}(v)$.
\end{proof}
\else
\begin{proof}[Proof sketch]
The first step for proving this lemma is to show that if  $\dist_p(u,\cC) > \len(u)$ and $u\RAS{}{\sS} v$, then already $S_{\cC}(u) \RAS{}{\sS} S_{\cC}(v)$. To see this this, we distinguish between the rules applied: 	When applying one of the rules \prettyref{eq:rule_uv}--\prettyref{eq:rule_gu}, we have $C_i(u) = C_i(v)$ for all $i$ since the exponents are only changed slightly. Thus, the shortening process does the same on $v$ as on $u$. When applying a rule \prettyref{eq:rule_uu}, the exponents are added, which is compatible with the shortening process.
Now we obtain the lemma by induction on $k$. In order to see that  $\dist_p(u,\cC) > \len(u)$ is satisfied in the inductive step, we use \prettyref{lem:distance}.
\end{proof}
\fi
\begin{figure}[t]
\pgfkeys{/pgf/inner sep=0em}  
  \centering
    \begin{tikzpicture}
    \node[circle,fill,minimum size=1mm,label=left:{$c_5$}] (0) {} ;
    \node[circle,fill,minimum size=1mm, above right = .5 and .5/2 of 0] (1) {} ;
    \path let \p1 = (1) in node[label=left:{$c_6$}] at (0,\y1) (y1) {};
    \node[circle,fill,minimum size=1mm, below right = 3.5 and 3.5/2 of 1] (2) {} ;
    \path let \p1 = (2) in node[label=left:{$c_2$}] at (0,\y1) (y2) {};
    \node[circle,fill,minimum size=1mm, above right = .5 and .5/2 of 2] (3) {} ;
    \path let \p1 = (3) in node[label=left:{$c_3$}] at (0,\y1) (y3) {};
    \node[circle,fill,minimum size=1mm, above right = 5 and 5/2 of 3] (4) {} ;
    \path let \p1 = (4) in node[label=left:{$c_9$}] at (0,\y1) (y4) {};
    \node[circle,fill,minimum size=1mm, below right = 1.5 and 1.5/2 of 4] (5) {} ;
    \path let \p1 = (5) in node[label=left:{$c_7$}] at (0,\y1) (y5) {};
    \node[circle,fill,minimum size=1mm, above right = 1 and 1/2 of 5] (6) {} ;
    \path let \p1 = (6) in node[label=left:{$c_8$}] at (0,\y1) (y6) {};
    \node[circle,fill,minimum size=1mm, below right = 6 and 6/2 of 6] (7) {} ;
    \path let \p1 = (7) in node[label=left:{$c_1$}] at (0,\y1) (y7) {};
    \node[circle,fill,minimum size=1mm, above right = 2.5 and 2.5/2 of 7] (8) {} ;
    \path let \p1 = (8) in node[label=left:{$c_4$}] at (0,\y1) (y8) {};
      
    \fill [red!30] ($(y7)+(0,.3)$) rectangle ($(y2)+(11,-.3)$);
    \fill [red!30] ($(y3)+(0,.3)$) rectangle ($(y8)+(11,-.3)$);
    \fill [red!30] ($(y8)+(0,.3)$) rectangle ($(0)+(11,-.3)$);
    \fill [red!30] ($(y5)+(0,.3)$) rectangle ($(y6)+(11,-.3)$);
    
    \draw[->] (0) -- ($(0)+(11,0)$);
    \draw[->]  ($(0)+(0,-4.5)$) -- ($(0)+(0,3)$);
    \draw (0) -- (1) -- (2) -- (3) -- (4) -- (5) -- (6) -- (7) -- (8);
    \draw[help lines] (y1) -- ($(y1)+(11,0)$);
    \draw[help lines] (y2) -- ($(y2)+(11,0)$);
    \draw[help lines] (y3) -- ($(y3)+(11,0)$);
    \draw[help lines] (y4) -- ($(y4)+(11,0)$);
    \draw[help lines] (y5) -- ($(y5)+(11,0)$);
    \draw[help lines] (y6) -- ($(y6)+(11,0)$);
    \draw[help lines] (y7) -- ($(y7)+(11,0)$);
    \draw[help lines] (y8) -- ($(y8)+(11,0)$);

   \draw [decorate,decoration={brace,mirror,amplitude=3pt}]  
   ($(y7)+(11,.3)$) -- ($(y2)+(11,-.3)$) node [midway,xshift=0.3cm] {$d_1$};
   
   \draw [decorate,decoration={brace,mirror,amplitude=3pt}]  
   ($(y3)+(11,.3)$) -- ($(y8)+(11,-.3)$) node [midway,xshift=0.3cm] {$d_3$};
   
   \draw [decorate,decoration={brace,mirror,amplitude=3pt}]  
   ($(y8)+(11,.3)$) -- ($(0)+(11,-.3)$) node [midway,xshift=0.3cm] {$d_4$};
   
   \draw [decorate,decoration={brace,mirror,amplitude=3pt}]  
   ($(y5)+(11,.3)$) -- ($(y6)+(11,-.3)$) node [midway,xshift=0.3cm] {$d_7$};
    \end{tikzpicture}
    \caption{\label{fig-shortening}The red shaded parts represent the intervals from the set
    $\cC_{u,p}^{K}$ in \eqref{def-C^k}.
    The differences $c_3-c_2$, $c_6-c_5$, $c_7-c_6$ and $c_9-c_8$ are strictly smaller than $2K$.}
\end{figure}
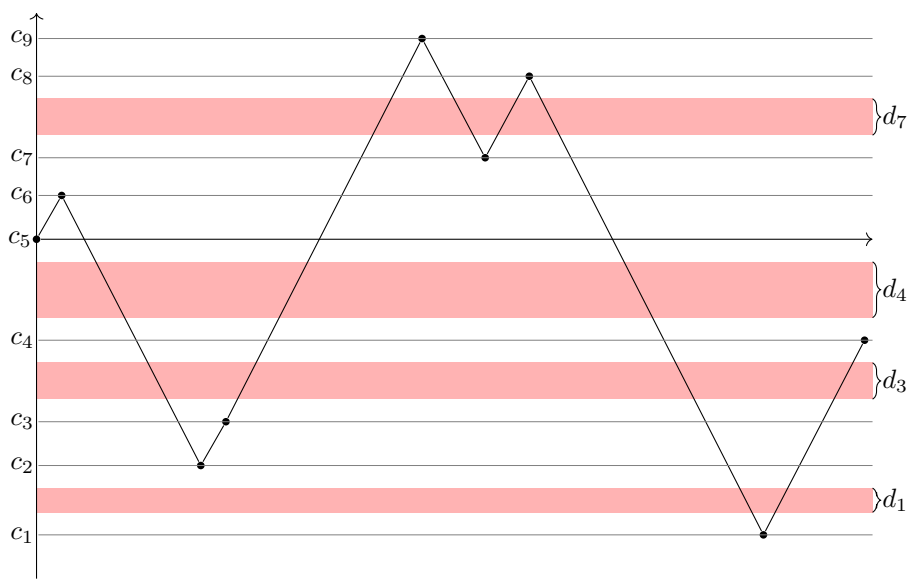

We define a set of intervals which should be ``cut out''  from $u$ as follows:
We write $\oneset{c_1, \dots, c_l} = \set{\eta_p^i(u)}{0 \le i \leq m}$ with $c_1 < \dots< c_l$ and we set 
\begin{equation} \label{def-C^k}
\cC_{u,p}^{K} = \set{[c_j + K, c_{j+1}-K]}{1\leq j \leq l - 1, c_{j+1} - c_j \geq 2K}.
\end{equation}
Notice that  $\dist_p(u,\cC_{u,p}^{K}) = K$ (given that $\cC_{u,p}^{K} \neq \emptyset$).
The situation is shown in Figure~\ref{fig-shortening}.

\begin{proposition}\label{prop:main_lemma}
	Let  $p\in \Omega$, $u = u_0 p^{y_1} u_1 \cdots p^{y_m} u_m \in \Delta^*$ with $u_i \in (\Delta \setminus \Delta_p)^*$,
	and $K = (6\abs{u}_\Delta+1)^2\len(u)+1$. Then $u=_{F_X}1$ if and only if $S_{\cC}(u)=_{F_X} 1$  for $\cC = \cC_{u,p}^{K} $.
\end{proposition}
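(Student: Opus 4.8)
The plan is to assemble the equivalence from three facts already in hand. By \prettyref{lem:reduced_forms} the condition $u =_{F_X} 1$ is the same as $u \RAS{*}{\sS} 1$, and by \prettyref{lem:rewriting_bound} any such reduction can be taken to have length at most $k_0 := 2\abs{u}_\Delta + 4\abs{u}_{\Delta'} \le 6\abs{u}_\Delta$. The whole point of the choice $K = (6\abs{u}_\Delta+1)^2\len(u)+1$ is that, whenever $\cC = \cC_{u,p}^{K} \ne \emptyset$, we have $\dist_p(u,\cC) = K > (k_0+1)^2\len(u) \ge (k+1)^2\len(u)$ for every $k \le k_0$, so \prettyref{lem:shorten} applies to \emph{any} $\sS$-reduction issuing from $u$ once its length has been trimmed to $\le k_0$ via \prettyref{lem:rewriting_bound}.

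First I would dispose of the degenerate case $\cC = \emptyset$: then every set $C_i(u)$ is empty, so $z_i = y_i$ and $S_{\cC}(u) = u$, and there is nothing to prove. So assume $\cC \ne \emptyset$; then $\dist_p(u,\cC) = K > 0$, in particular $u$ is compatible with $\cC$ and $S_{\cC}(u)$ is defined.

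For the implication $u =_{F_X} 1 \Rightarrow S_{\cC}(u) =_{F_X} 1$: by \prettyref{lem:reduced_forms} and \prettyref{lem:rewriting_bound} there is $k \le k_0$ with $u \RAS{k}{\sS} 1$; since $(k+1)^2\len(u) < K = \dist_p(u,\cC)$, \prettyref{lem:shorten} gives $S_{\cC}(u) \RAS{k}{\sS} S_{\cC}(1)$, and $S_{\cC}(1) = 1$ because the empty word carries no $p$-power. Hence $S_{\cC}(u) \RAS{*}{\sS} 1$ and \prettyref{lem:reduced_forms} yields $S_{\cC}(u) =_{F_X} 1$. For the converse I would argue by contraposition. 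If $u \ne_{F_X} 1$, then since $\sS$ terminates (again by the length bound in \prettyref{lem:rewriting_bound}) there is $v \in \IRR(\sS)$ with $u \RAS{*}{\sS} v$; then $v =_{F_X} u \ne_{F_X} 1$, and as $\pi(v)$ is freely reduced this forces $v \ne 1$. Taking $k \le k_0$ with $u \RAS{k}{\sS} v$ and applying \prettyref{lem:shorten} gives $S_{\cC}(u) \RAS{k}{\sS} S_{\cC}(v)$, and $S_{\cC}(v) \in \IRR(\sS)$ by \prettyref{lem:z_not_zero}. Since $S_{\cC}(v)$ arises from $v$ by replacing each power $p^{y_i}$ by $p^{z_i}$ with $z_i \ne 0$ and leaving all other letters untouched, $S_{\cC}(v) \ne 1$; being a nonempty $\sS$-irreducible (hence freely reduced) word it satisfies $S_{\cC}(v) \ne_{F_X} 1$, so $S_{\cC}(u) =_{F_X} S_{\cC}(v) \ne_{F_X} 1$.

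The argument is essentially bookkeeping; the only delicate point is the chain of inequalities that licenses the use of \prettyref{lem:shorten} — namely that the relevant number of rewrite steps really is $\le 6\abs{u}_\Delta$, so that $(k+1)^2\len(u) < K$ — together with the small side facts $S_{\cC}(1)=1$ and $S_{\cC}(v) \ne 1$ for $v \ne 1$, and the degenerate case $\cC = \emptyset$. All of the substantive work is already absorbed into \prettyref{lem:shorten_one_step} and \prettyref{lem:shorten}.
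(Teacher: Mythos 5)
Your proof is correct and follows essentially the same route as the paper's own argument: reduce via \prettyref{lem:reduced_forms} and \prettyref{lem:rewriting_bound} to a short $T$-reduction, note that $K$ exceeds $(k+1)^2\len(u)$ so \prettyref{lem:shorten} transfers the reduction to $S_{\cC}(u)$, and conclude with \prettyref{lem:z_not_zero} that the shortened normal form is empty exactly when the original one is. The only difference is that you additionally spell out the degenerate case $\cC=\emptyset$ and the converse direction explicitly, which the paper leaves implicit.
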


\begin{proof}
 By \prettyref{lem:reduced_forms} we have $u=_{F_X}1$ if and only if $u \RAS{*}{\sS} 1$. Let $k = 6\abs{u}_\Delta$.
 By \prettyref{lem:rewriting_bound}, for all $u\RAS{*}{\sS} v$ we have $u\RAS{k}{\sS} v$. By the choice of $\cC$, we have $\dist_p(u,\cC) > (k+1)^2\len(u)$. Hence, we can apply \prettyref{lem:shorten}, which implies that $S_{\cC}(u) \RAS{*}{\sS} S_{\cC}(v)$ where $v$ is a $T$-reduced (thus freely reduced) word for $u$. Clearly, if $v$ is the empty word, then $S_{\cC}(v)$ will be the empty word. On the other hand, if $v$ is non-empty, by \prettyref{lem:z_not_zero}, $S_{\cC}(v)$ is non-empty and $T$-reduced. Hence, we have $u=_{F_X}1$ if and only if $S_{\cC}(u)=_{F_X}1$. 
\end{proof}

\begin{lemma}\label{lem:exponents_small}
Let  $p$, $u$, $K$, and $\cC$ be as in \prettyref{prop:main_lemma} and $S_{\cC}(u)= u_0 p^{z_1} u_1 \cdots p^{z_m} u_m$.
Then $\abs{z_i} \leq m\cdot( 2 \cdot(6\abs{u}_\Delta+1)^2\cdot\len(u)+1)$ for all $1 \le i \leq m$.
\end{lemma}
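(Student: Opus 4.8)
The plan is to analyse a single exponent $z_i$ of $\cS_\cC(u)$ by comparing it with the step $y_i$ from which it came. Fix $i$ and, by symmetry, assume $y_i > 0$ (the case $y_i < 0$ is obtained by exchanging the roles of $\eta_p^{i-1}(u)$ and $\eta_p^i(u)$, and only $\abs{z_i}$ enters the statement). Writing $\oneset{c_1, \dots, c_l} = \set{\eta_p^j(u)}{0 \le j \le m}$ with $c_1 < \cdots < c_l$ as in \eqref{def-C^k}, we have $l \le m+1$; let $s < t$ be the indices with $\eta_p^{i-1}(u) = c_s$ and $\eta_p^i(u) = c_t$, so that $y_i = c_t - c_s$.

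The only non-routine step is to pin down $C_i$ precisely. By \eqref{def-C^k}, every interval of $\cC = \cC_{u,p}^{K}$ has the form $[c_a+K,\, c_{a+1}-K]$ for some $a$ with $c_{a+1}-c_a \ge 2K$, and such an interval has length $d = c_{a+1}-c_a-2K+1$. I would check that it belongs to $C_i$ (that is, that it lies strictly between $c_s$ and $c_t$) exactly when $s \le a \le t-1$: if $a \le s-1$ then $c_a \le c_{a+1}-2K \le c_s-2K$, hence $c_a+K < c_s$; symmetrically, $a \ge t$ forces $c_{a+1} \ge c_t+2K > c_t$; and conversely, for $s \le a \le t-1$ one has $c_a+K > c_s$ and $c_{a+1}-K < c_t$. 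The underlying observation is that no interval of $\cC$ can straddle $c_s$ or $c_t$, since the interval endpoints all have the form $c_a \pm K$ and no value $\eta_p^j(u)$ lies strictly between two consecutive $c_a$'s.

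Given this, the rest is a telescoping computation. Since $\sign(z_i) = \sign(y_i)$ by \prettyref{lem:z_not_zero}, we have $\abs{z_i} = y_i - \sum_{j \in C_i} d_j$, and therefore
\begin{align*}
\abs{z_i} &= \sum_{a=s}^{t-1}(c_{a+1}-c_a) \;-\; \sum_{\substack{s \le a \le t-1,\\ c_{a+1}-c_a \ge 2K}} (c_{a+1}-c_a-2K+1)\\
&= \sum_{a=s}^{t-1} g_a,
\end{align*}
where $g_a = c_{a+1}-c_a$ if $c_{a+1}-c_a < 2K$, and $g_a = 2K-1$ otherwise. In either case $g_a \le 2K-1$ (in the first case because the $c_a$ are integers), and there are $t-s \le l-1 \le m$ summands, so $\abs{z_i} \le m(2K-1)$. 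Substituting $K = (6\abs{u}_\Delta+1)^2\len(u)+1$ yields $2K-1 = 2\cdot(6\abs{u}_\Delta+1)^2\cdot\len(u)+1$, hence $\abs{z_i} \le m\cdot\bigl(2\cdot(6\abs{u}_\Delta+1)^2\cdot\len(u)+1\bigr)$, which is the claim. I do not anticipate a genuine obstacle; the only care needed is the case analysis identifying $C_i$.
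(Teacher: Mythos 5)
Your proof is correct and follows essentially the same route as the paper: identify the intervals of $\cC$ contributing to $C_i$ as those between consecutive values $c_a$ lying (weakly) between $\eta_p^{i-1}(u)$ and $\eta_p^i(u)$, telescope $y_i$ over these gaps, and observe that each gap contributes at most $2K-1$ to $\abs{z_i}$, with at most $l-1\le m$ gaps. The paper compresses this into one chain of estimates using $\sum_j\max\{0,c_{j+1}-c_j-2K+1\}$, while you spell out the identification of $C_i$ explicitly, but the content is the same.
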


\ifFullVersion
\begin{proof}
Let $K = (6\abs{u}_\Delta+1)^2\len(u)+1$ and let $c_1, \dots, c_l$ as above (note that $l \leq m +1$). We have
	\begin{align*}
	\abs{z_i} &= \Big| y_i - \sign(y_i)\cdot\sum_{j\in C_i}d_j\Big|\\
			  &= \Big| y_i - \sign(y_i)\cdot\sum_j \max\{0, c_{j+1} - c_j - 2K+1\} \Big|\\
			  &\leq  \Big| y_i - \sign(y_i)\cdot\sum_j  (c_{j+1} - c_j) \Big| + (l-1) (2K-1) = (l-1) (2K-1)\leq m (2K-1).
	\end{align*} 
	Here the sums in the second and the last line range over all $j$ such that $\eta_{p}^i(w)\leq c_j < \eta_{p}^{i+1}(w)$ (resp.\ $\eta_{p}^{i+1}(w)\leq c_j < \eta_{p}^i(w)$ if $y_i < 0$).
\end{proof}
\fi

\begin{proof}[Proof of \prettyref{thm:free_power_wp}]
	We start with the preprocessing as described in \prettyref{lem:preprocessing} leading to a word $w=s_0p_1^{x_1}s_1 \cdots p_n^{x_n}s_n$ with $p_i \in \Omega $ and $ s_i \in \IRR(S)$ as in \prettyref{eq:word-w}. After that we apply the shortening procedure for all $p \in \set{p_i}{1\leq i \leq n}$. This can be done in parallel for all $p$, as the outcome of the shortening only depends on the $p$-exponents. By \prettyref{lem:exponents_small} this leads to a word $\hat w$ of polynomial length. Finally, we can test whether  $\hat w =_{F_X} 1$ using one oracle gate to the word problem for $F_2$ (recall that $F_2$ contains a copy of $F_X$). The computations for shortening only involve iterated addition (and comparisons of integers), which is in \TC and, thus, can be solved in $\AC$ with oracle gates for the word problem for $F_2$.
\end{proof}
\ifFullVersion
\section{Proof of \prettyref{thm:tc0-wreath}}\label{sec:proof-tc0-wreath}
\else
\section{The power word problem in wreath products}\label{sec:proof-wreath}
\fi

The goal of this section is to prove Theorems~\ref{thm:tc0-wreath} and \ref{thm:wreath-coNP}.
We first fix some notation. We fix a finitely generated group $G$ with the finite  symmetric generating set
$\Sigma$. For $\Z$ we fix the generator $a$. Hence $\Sigma \cup \{a,a^{-1}\}$
is a symmetric generating set for the wreath product $G \wr \Z$.
For a word $w = v_0 a^{e_1} v_1 \cdots a^{e_n} v_n$
with $e_i \in \{-1,1\}$ and $v_i \in \Sigma^*$ let $\sigma(w) = e_1  + \cdots + e_n$ be the integer represented
by $w$. 
\ifFullVersion
	For a word $w \in (\Sigma \cup \{a,a^{-1}\})^*$ let $\pi_a(w)$ be the projection on the subalphabet $\{a,a^{-1}\}$
	and define $\sigma(w) = \sigma(\pi_a(w))$ (the $\Z$-shift of $w$).
\fi
Moreover, we  denote with $I(w)$ the interval $[b,c] \subseteq \Z$, where $b$ (resp., $c$) is the minimal
(resp., maximal) integer of the form $e_1 + \cdots + e_i$ for $0 \le i \le n$.
Note that if $w$ represents $(f,d) \in G \wr \Z$, then $d = \sigma(w)$,
$\supp(f) \subseteq I(w)$ and $0,d \in I(w)$.
\ifFullVersion
	 For an integer interval $[a,b] \subseteq \mathbb{Z}$ and 
	$z \in \mathbb{Z}$ we write $z + [a,b]$ for the interval $[z+a,z+b]$.
	A function $f : \Z \to G$ is called {\em periodic with period} $q \geq 1$ on the interval $[b,c]$ if $ f(x) = f(x+q)$ for all $b \leq x \leq c-q$.
\fi

\ifFullVersion
\subsection{Periodic words over groups} \label{sec-periodic}
\else
\subparagraph*{Periodic words over groups.} \label{sec-periodic}
\fi

We  recall a construction from \cite{GanardiKLZ18}.
With $G^+$ we denote the set of all tuples $(g_0,\ldots, g_{q-1})$ over $G$ of arbitrary length $q \geq 1$.
With $G^\omega$ we denote the set of all mappings $f : \mathbb{N} \to G$. Elements of $G^\omega$ 
can be seen as infinite sequences (or words) over the set $G$.
We define the binary operation $\otimes$ on $G^\omega$  by pointwise 
multiplication: $(f \otimes g)(n) = f(n) g(n)$. 
\ifFullVersion
In fact, $G^\omega$ together with the multiplication $\otimes$ is the direct product of $\aleph_0$ many copies
of $G$.
\fi The identity element is the mapping $\id $ with $\id(n)=1$ for all $n \in \mathbb{N}$.
For $f_1, f_2, \ldots, f_n \in G^\omega$ we write $\bigotimes_{i=1}^n f_i$ for $f_1 \otimes f_2 \otimes \cdots \otimes f_n$.
If $G$ is abelian, we write  $\sum_{i=1}^n f_i$ for  $\bigotimes_{i=1}^n f_i$.
A function $f \in G^\omega$ is periodic with period $q \geq 1$ if $f(k) = f(k+q)$ for all $k \geq 0$.
\ifFullVersion
Note that in this situation, $f$ might be periodic with a smaller period $q' < q$.
\fi
Of course, a periodic function $f$ with period $q$ can specified by the tuple $(f(0), \ldots, f(q-1))$.
Vice versa, a tuple $u = (g_0, \ldots, g_{q-1}) \in G^+$ defines the periodic function $f_u \in G^\omega$ with
$f_u(n \cdot q + r) = g_{r}$ for $n \geq 0$ and $0 \leq r < q$.
One can view this mapping as the sequence $u^\omega$ obtained by taking infinitely many repetitions of $u$.
Let $G^\rho$ be the set of all periodic functions from $G^\omega$.
If $f_1$ is periodic with period $q_1$ and $f_2$ is periodic with period $q_2$, then 
$f_1 \otimes f_2$ is periodic with period $q_1 q_2$ (in fact, $\mathrm{lcm}(q_1, q_2)$). Hence, $G^\rho$
forms a countable subgroup of $G^\omega$. Note that  $G^\rho$ is not finitely generated: The subgroup
generated by elements $f_i \in G^\rho$ with period $q_i$ ($1 \leq i \leq n$) contains only functions
with period $\mathrm{lcm}(q_1, \ldots, q_n)$.
For $n \geq 0$ we define the subgroup $G^\rho_n$ of all $f \in G^\rho$ with $f(k) = 1$ for all $0 \leq k \leq n-1$.
We consider the uniform membership problem for subgroups $G^\rho_n$, $\MEM(G^\rho_{\ast})$ for short:
\begin{itemize}
\item input: tuples $u_1, \ldots, u_n \in G^+$ (elements of $G$ are represented by finite words over $\Sigma$) and a binary encoded number $m$.
\item question: does $\bigotimes_{i=1}^n f_{u_i}$ belong to $G^\rho_m$?
\end{itemize}
The following result was shown in \cite{GanardiKLZ18}:

\begin{theorem} \label{thm:abelian-membership}
For every finitely generated abelian group $G$, $\MEM(G^\rho_{\ast})$ \ifFullVersion belongs to \else is in \fi $\TC$.
\end{theorem}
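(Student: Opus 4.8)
\textbf{Step 1 (reduction to a finite cyclic group of polynomially bounded order).}
The plan is to peel the problem down to a purely combinatorial statement about periodic integer sequences and then decide it via a generating-function identity. Since $G$ is a fixed finitely generated abelian group, $G \cong \Z^r \times C_1 \times \cdots \times C_s$ with the $C_\ell$ finite cyclic; projecting the entries of the $u_i$ onto a factor is a fixed homomorphism, hence computable in $\TC$, and $\bigotimes_i f_{u_i} \in G^\rho_m$ holds iff the analogous membership holds in each factor. For a $\Z$-factor, every entry of $u_i$ is a word over $\{a,a^{-1}\}$ of length at most the input size $N$, so it evaluates to an integer of absolute value $\le N$; hence the value of $\bigotimes_i f_{u_i}$ at any position has absolute value $\le N^2$, and the condition ``this value is $0$ for all $k<m$'' is equivalent to the same condition modulo a polynomially bounded $M$. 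Splitting $M$ (and, for a finite factor, its order) into prime powers, it remains to solve: \emph{given periodic integer sequences $f_1,\dots,f_n$ of periods $q_1,\dots,q_n$ with entries read modulo $p^e$, where $p^e$ and all $q_i$ are polynomially bounded, and a binary number $m$, decide whether $\sum_i f_i(k) \equiv 0 \pmod{p^e}$ for all $0\le k<m$.} Call this problem $(\star)$.

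\textbf{Step 2 (isolating the $p$-part of the periods).}
Write $q_i = p^{b_i} r_i$ with $p\nmid r_i$. As $q_i$ is polynomially bounded, so is $P := p^{\max_i b_i}\le \max_i q_i$. By the Chinese remainder theorem, $k \bmod q_i$ is determined by the pair $(k\bmod p^{b_i},\, k\bmod r_i)$, and the first component is a function of $\alpha := k\bmod P$. Hence, for each of the polynomially many $\alpha\in\{0,\dots,P-1\}$, restricting the $f_i$ to the arithmetic progression $k\equiv\alpha\pmod P$ yields (after reindexing, using $\gcd(P,r_i)=1$) a new instance of $(\star)$ in which \emph{all periods are coprime to $p$} and the bound $m$ is replaced by an effective bound obtained by division with remainder; both the reindexing and the new bound are computable in $\TC$. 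So we may additionally assume $\gcd(q_i,p)=1$ for all $i$; then $x^{R}-1$ with $R := \operatorname{lcm}_i q_i$ is separable modulo $p$, even though $R$ itself may be exponentially large.

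\textbf{Step 3 (a generating-function identity).}
Over $\Z/p^e\Z$ put $u_i(x) = \sum_{j=0}^{q_i-1} c_{i,j}x^j$, so that $\sum_{k\ge 0} f_i(k)x^k = u_i(x)/(1-x^{q_i})$ as a formal power series. The common denominator of $\sum_i u_i(x)/(1-x^{q_i})$ is $\prod_j(1-x^{q_j})$, which has constant term $1$ and is therefore a unit in $(\Z/p^e\Z)[[x]]$; consequently $\sum_i f_i$ vanishes on $[0,m)$ iff the \emph{polynomial}
\[
 N(x) = \sum_{i=1}^{n} u_i(x)\prod_{j\ne i}\bigl(1-x^{q_j}\bigr)
\]
has all coefficients of $x^0,\dots,x^{m-1}$ equal to $0$ modulo $p^e$. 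The crucial point is that $\deg N < \sum_j q_j$ is polynomial, so only polynomially many coefficients matter; and, grouping equal periods, the factors $(1-x^{q})^{\mathrm{mult}(q)}$ are governed by binomial coefficients that, because $x^R-1$ is separable modulo $p$, can be evaluated modulo $p^e$ (via Kummer's/Lucas' theorem), which lets one compute the relevant coefficients of $N(x)$ in $\TC$. Together with iterated addition, iterated multiplication of polynomially bounded operands, integer division, and Chinese remaindering with polynomially bounded moduli --- all of which lie in $\TC$ --- this decides $(\star)$ and hence $\MEM(G^\rho_\ast)$.

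\textbf{Main obstacle.}
The genuine difficulty is that the period $Q=\operatorname{lcm}_i q_i$ of $\bigotimes_i f_{u_i}$ is in general exponential, so one can enumerate neither the positions $k<m$ nor the residues modulo $Q$. The whole argument hinges on the split of Step 2, which confines this exponential behaviour to the part of the period coprime to the modulus' prime $p$, where it is handled symbolically through the bounded-degree polynomial $N(x)$ of Step 3. Making this $\TC$-uniform --- in particular, extracting the coefficients of $N(x)$ without running into the (in general \#P-hard) subset-sum behaviour of the expansion of $\prod_j(1-x^{q_j})$ --- is the technical heart of the proof.
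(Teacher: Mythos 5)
You should first be aware that the paper contains no proof of \prettyref{thm:abelian-membership} to compare against: the result is quoted from \cite{GanardiKLZ18}. Judged on its own merits, your proposal has the right core idea in Step~3, and it is the decisive one: since the periods $q_i$ are given in unary (the tuples $u_i$ are written out explicitly), the series $\sum_{k\ge 0} f(k)x^k$ equals $N(x)/\prod_j(1-x^{q_j})$ with $\deg N < \sum_j q_j$ polynomial, and the denominator has constant term $1$, so only polynomially many initial coefficients can matter. The genuine gap is exactly the step you yourself flag as the ``main obstacle'': you never give a \TC{} procedure for the coefficients of $N(x)$, and the hint you offer (separability of $x^R-1$ modulo $p$ plus Kummer/Lucas) does not supply one~-- Lucas-type theorems handle a single binomial coefficient $\binom{m_q}{t}$ (whose arguments are at most $n$ here, so it is computable exactly anyway), but not the convolution over the distinct periods, and separability plays no role in that convolution. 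As written, the argument is therefore incomplete at its decisive computational step, and Steps~1--2 (reduction modulo polynomially bounded prime powers, splitting off the $p$-part of the periods) are machinery introduced only to serve that unfinished step.

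The gap is easy to close, in either of two ways, and both make Steps~1--2 superfluous. (i) Because $\prod_j(1-x^{q_j})$ is a unit in the power-series ring, the first $m'$ coefficients of $N$ vanish if and only if $f(k)=1$ in $G$ for all $k<m'$, where $m'=\min\{m,\sum_j q_j\}$; and since $\deg N<\sum_j q_j$, the original question ``$f(k)=1$ for all $k<m$'' is equivalent to ``$f(k)=1$ for all $k<m'$''. These are polynomially many positions, and each value $f(k)$ is obtained by computing $k \bmod q_i$ (division with remainder is in \TC{} \cite{HeAlBa02}), looking up the corresponding entries of the $u_i$, and multiplying them in the finitely generated abelian group $G$ (iterated addition of binary integers plus a finite component), all in \TC. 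So no coefficient of $N$ ever needs to be computed, no reduction modulo $p^e$ is needed, and the ``exponential period'' you worry about never enters. (ii) Alternatively, the coefficients of $N$ are themselves \TC-computable: $N$ is an iterated product of polynomially many polynomials of polynomial degree whose coefficients have polynomial bit length, which reduces via Kronecker substitution to iterated integer multiplication \cite{HeAlBa02}; the ``subset-sum behaviour'' of $\prod_j(1-x^{q_j})$ is harmless precisely because the $q_j$ are unary, so there is no \#P-type obstruction. With either repair your outline becomes a correct proof, but the honest assessment is that the step you identified as the technical heart was left unproved, and the tools you proposed for it would not carry it.
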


\ifFullVersion
\subsection{Periodic words arising from powers in $G \wr \Z$}
The relationship between periodic functions on integer intervals and powers in wreath products is 
expressed by the following lemma. Note that the interval $[b+s, c-s]$ might be empty, in which case the conclusion of the lemma
is trivially true. 
\fi

\begin{lemma} \label{lem:periodic-wreath}
Let $w \in  (\Sigma \cup \{a,a^{-1}\})^*$ with $\sigma(w) \neq 0$, $n \geq 1$, and $I(w^n) = [b,c]$.
Moreover, let $s = c-b+1$ be the size of the interval $I(w)$ and let $(g, n \cdot \sigma(w)) \in G \wr \Z$ be the 
group element represented by $w^n$. Then $g$ is periodic on the interval $[b+s, c-s]$
with period $|\sigma(w)|$.
\end{lemma}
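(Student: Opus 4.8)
Set $d=\sigma(w)$ and let $(f,d)\in G\wr\Z$ be the element represented by $w$. As observed just before the lemma, $\supp(f)\subseteq I(w)$ and $0,d\in I(w)$; write $I(w)=[b',c']$, so that $s=c'-b'+1$ is the size of $I(w)$. The plan is to first express $g$ explicitly through $f$ and then to read off the periodicity from a short interval computation. Replacing $w$ by its formal inverse (which has $\Z$-shift $-d$ and whose $n$-th power represents an element whose coordinate function is $x\mapsto g(x+nd)^{-1}$, hence periodic exactly where $g$ is) we may assume $d>0$.

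First I would prove, by induction on $n$ using the multiplication rule of $G\wr\Z$, that $w^n$ represents $(g,nd)$ with
\[
g(x)=f(x)\,f(x-d)\,f(x-2d)\cdots f(x-(n-1)d)\qquad(x\in\Z),
\]
the factors taken in this left-to-right order; the inductive step just appends the factor $f(x-(n-1)d)$ on the right when multiplying the $(n-1)$-st power by $(f,d)$. Factoring out $P(x)=f(x)f(x-d)\cdots f(x-(n-2)d)$, which is a prefix of $g(x)$ and a suffix of $g(x+d)$, gives the key identity
\[
g(x+d)=f(x+d)\cdot g(x)\cdot f(x-(n-1)d)^{-1}\qquad(x\in\Z).
\]

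The remaining work is interval bookkeeping. For $d>0$ the running partial sums of $w^n$ are those of $w$ shifted successively by $0,d,\dots,(n-1)d$, so $I(w^n)=[\,b',\,c'+(n-1)d\,]$; thus in the notation of the lemma $b=b'$, $c=c'+(n-1)d$, whence $b+s=c'+1$ and $c-s=b'+(n-1)d-1$. Fix $x$ with $b+s\le x\le(c-s)-d$, \ie $c'+1\le x\le b'+(n-2)d-1$. Then $x+d>c'$ and $x-(n-1)d<b'$, so both lie outside $\supp(f)\subseteq[b',c']$, giving $f(x+d)=1$ and $f(x-(n-1)d)=1$; the key identity then yields $g(x+d)=g(x)$. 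This is precisely periodicity of $g$ with period $d=|\sigma(w)|$ on $[b+s,c-s]$ (and if that interval is empty there is nothing to prove).

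The main thing to watch is that $G$ need not be abelian, so the ``telescoping'' in the key identity has to be done carefully: the common factor $P(x)$ sits as a prefix on one side and as a suffix on the other and must not be commuted past anything. Beyond that only the endpoint arithmetic for $I(w^n)$ and the routine check that the case $d<0$ is genuinely symmetric remain, and both are straightforward.
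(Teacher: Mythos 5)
Your proof is correct: the product formula $g(x)=f(x)\,f(x-d)\cdots f(x-(n-1)d)$ follows by induction exactly as you say, the key identity $g(x+d)=f(x+d)\,g(x)\,f(x-(n-1)d)^{-1}$ is right, the endpoint arithmetic ($b+s=c'+1$, $c-s=b'+(n-1)d-1$ for $I(w)=[b',c']$, so that $x+d>c'$ and $x-(n-1)d<b'$ in the relevant range) checks out, and the reduction of $\sigma(w)<0$ to $\sigma(w)>0$ via the formal inverse works (you also correctly read the lemma's $s$ as $|I(w)|$, which is the intended meaning). The execution differs from the paper's: there, for each position $k\in[b+s,c-s]$ one considers the set $I_k=\{i\mid k\in i\cdot\sigma(w)+I(w)\}$ of copies of $w$ whose support interval can reach $k$, shows $I_k\subseteq[0,n-1]$, writes $g(k)$ as the ordered product of $f$-values over $I_k$, and verifies that passing from $k$ to $k+\sigma(w)$ shifts $I_k$ by one while preserving the offsets inside $I(w)$, so the product is literally unchanged. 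You instead keep the full product over all $n$ copies and localize the difference between $g(x+d)$ and $g(x)$ in the two boundary factors $f(x+d)$ and $f(x-(n-1)d)^{-1}$, which vanish in the middle region because $\supp(f)\subseteq I(w)$. Both arguments rest on the same decomposition of $g$ into translates of $f$; yours avoids the covering-set bookkeeping and is a bit shorter, while the paper's local analysis makes explicit which copies contribute at each position (and it dismisses $\sigma(w)<0$ as symmetric, where you give the inverse-word reduction explicitly).
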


\ifFullVersion
\begin{proof}
Let us assume that $\sigma(w) > 0$; the case $\sigma(w) < 0$ is symmetric.
Let $(f,\sigma(w)) \in G \wr \Z$ be the group element represented by $w$.
Consider a position $k \in [b+s,c-s]$ and let $I_k = \{ i \in \mathbb{Z} \mid k \in i \cdot \sigma(w) + I(w)\}$. For all $i \in \mathbb{Z}$, if 
$k \in i \cdot \sigma(w) + I(w)$, then we must have $0 \le i \le n-1$.
 Thus, $I_k$ is a subinterval of $[0,n-1]$.
Let $l_k = \min(I_k)$ and $p_k \in I(w)$ such that $k = l_k \cdot \sigma(w) + p_k$. Then
$k = i \cdot \sigma(w) + p_k - (i-l_k) \cdot \sigma(w)$ and $p_k - (i-l_k) \cdot \sigma(w) \in I(w)$
 for all $i \in I_k$. For the  function value $g(k)$ we then obtain
 \[
 g(k) = \prod_{i \in I_k} f(p_k - (i-l_k) \cdot \sigma(w)) .
 \]
 Note that this value is uniquely determined by $p_k$ and the size of $I_k$.
 
Now assume additionally that $b+s \leq k < k + \sigma(w)  \le c-s$. We then have for all $i \in \mathbb{Z}$:
$i \in I_{k+\sigma(w)}$ if and only if 
$k + \sigma(w) \in i \cdot \sigma(w) + I(w)$ if and only if 
$k  \in (i-1) \cdot \sigma(w) + I(w)$ if and only if 
$i-1 \in I_k$ if and only if $i \in I_k+1$. Clearly $I_k$ and $I_k+1$ have the same size.
Moreover, $l_{k+\sigma(w)} = l_k+1$. Hence, we have 
\[ p_{k+\sigma(w)} = k+\sigma(w) - l_{k+\sigma(w)} \cdot \sigma(w) = 
k+\sigma(w) - (l_k+1) \cdot \sigma(w) = k - l_k \cdot \sigma(w) = p_k .
\]
We thus obtain $g(k+\sigma(w)) = g(k)$.
This concludes the proof of the lemma. 
\end{proof}
Here is an example for the situation from Lemma~\ref{lem:periodic-wreath}
\fi

\begin{example}
Let us consider the wreath product $\Z \wr \Z$ and let the left copy of $\Z$ in the wreath product
be generated by $b$. 
Consider the word $w = b a^{-1} b a b a b^3 a b^3 a b^5 a^{-1} b$ and let $n = 8$.
We have $\sigma(w) = 2$ and $I(w) = [-1,3]$.
Moreover, $w$ represents the group element $(f,2)$ with 
$f(-1) = 1$, $f(0) = 2$, $f(1) = 3$, $f(2) = 4$, and $f(3) = 5$.

Let us now consider the word $w^8$. The following diagram shows how to obtain the corresponding
element of $\Z \wr \Z$:
\begin{center}
\ifFullVersion\else\small\fi
\begin{tabular}{|c|c|c|c|c|c|c|c|c|c|c|c|c|c|c|c|c|c|c|}
-1 & 0  & 1  & 2  & 3 & 4 & 5 & 6 & 7 & 8 & 9 & 10 & 11 & 12 & 13 & 14 & 15 & 16 & 17  \\ \thickhline
1 & 2 & 3 & 4 & 5 &&&&&&&&&&&&&& \\
& & 1 & 2 & 3 & 4 & 5 &&&&&&&&&&&& \\
& & & & 1 & 2 & 3 & 4 & 5 &&&&&&&&&& \\
& & & & & & 1 & 2 & 3 & 4 & 5 &&&&&&&& \\
& & & & & & & & 1 & 2 & 3 & 4 & 5 &&&&&& \\
& & & & & & & & & & 1 & 2 & 3 & 4 & 5 &&&& \\
& & & & & & & & & & & & 1 & 2 & 3 & 4 & 5 && \\
& & & & & & & & & & & & & & 1 & 2 & 3 & 4 & 5   \\ \hline
1 & 2 & 4 & 6 & 9 & 6 & 9 & 6 & 9 & 6 & 9 & 6 & 9 & 6 & 9 & 6 & 8 & 4 & 5 
\end{tabular}
\end{center}
We have $I(w^8) = [-1,17]$ and $\sigma(w^8) = 8 \sigma(w) = 16$.
If $(g,16)$ is the group element represented by $w^8$, then
the function $g$ is periodic on the interval $[2,14]$ (which includes the interval
$[-1+s,17-s]$, where $s = |I(w)|=5$) 
with period $2$.
\end{example}

\ifFullVersion
\subsection{The power word problem for $G \wr \Z$}
\fi

\ifdrawFigures
\usetikzlibrary{math}

\def\s{{ 0.5, 0, -0.5, 0, 0.5, 0, -0.5}}
\def\a{{-0.2, -0.6, -0.8, -0.6, -0.3, -0.2, -0.7}}
\def\b{{0.8, 0.4, 0.2, 0.4, 0.7, 0.8, 0.3}}
\def\n{{13, 7, 22, 8, 20, 5, 11}}
\def\p{{0,\s[0]*\n[0],\s[0]*\n[0]+\s[1]*\n[1],\s[0]*\n[0]+\s[1]*\n[1]+\s[2]*\n[2],\s[0]*\n[0]+\s[1]*\n[1]+\s[2]*\n[2]+\s[3]*\n[3],
           \s[0]*\n[0]+\s[1]*\n[1]+\s[2]*\n[2]+\s[3]*\n[3]+\s[4]*\n[4],\s[0]*\n[0]+\s[1]*\n[1]+\s[2]*\n[2]+\s[3]*\n[3]+\s[4]*\n[4]+\s[5]*\n[5],
           \s[0]*\n[0]+\s[1]*\n[1]+\s[2]*\n[2]+\s[3]*\n[3]+\s[4]*\n[4]+\s[5]*\n[5]+\s[6]*\n[6]}}           
\def\d{.1}
\def\h{{\d, \d*(\n[0]+1), \d*(\n[0]+\n[1]+1), \d*(\n[0]+\n[1]+\n[2]+1), \d*(\n[0]+\n[1]+\n[2]+\n[3]+1), \d*(\n[0]+\n[1]+\n[2]+\n[3]+\n[4]+1),
\d*(\n[0]+\n[1]+\n[2]+\n[3]+\n[4]+\n[5]+1)}}
\def\m{6}

\begin{figure}
\pgfkeys{/pgf/inner sep=0em}  
  \centering
    \begin{tikzpicture} 
    \fill [green!30] (\p[3]-1,-.5) rectangle (\p[2]+1,0);     
    \foreach \x [count=\xi] in {0,...,\m} 
      {  \fill [blue!30] (\p[\x]-1,-.5) rectangle (\p[\x]+1,0); }
     \foreach \x [count=\xi] in {0,...,\m} 
     {  \pgfmathtruncatemacro{\yend}{\n[\x]-1}
        \pgfmathtruncatemacro{\z}{10*\s[\x]}
        \foreach \y in {0,...,\yend} 
        {\draw (\p[\x]+\a[\x]+\s[\x]*\y, \h[\x]+\d*\y) -- (\p[\x]+\b[\x]+\s[\x]*\y, \h[\x]+\d*\y);
         \draw [red] (\p[\x]+\s[\x]*\y, \h[\x]+\d*\y) -- (\p[\x]+\s[\x]*\y+\s[\x], \h[\x]+\d*\y);
          \draw[help lines] (\p[\x]+\s[\x]*\y, \h[\x]+\d*\y) -- (\p[\x]+\s[\x]*\y, \h[\x]+\d*\y-\d);          
          }
        \draw[help lines] (\p[\x],0) -- (\p[\x],\h[\x]-\d);
        \ifthenelse{\NOT\z=0 \OR \x=\m}{\node[label=below:{\scriptsize $p_{\xi}$}] at (\p[\x],0) {} ;}
       {\node[label=below:{\scriptsize $p_{\xi}$}] at (\p[\x],-.25) {} ;} 
      }
      \draw[help lines] (\p[\m+1], \h[\m]+\d*\n[\m]) -- (\p[\m+1],0); 
      \node[label=below:{\scriptsize $p_{8}$}] at (\p[\m]+\s[\m]*\n[\m],-.25) {} ;
      \draw[->]  (\p[3]-1,0) -- (\p[2]+1,0);
    \end{tikzpicture} 
   \caption{\label{fig-wreath-product} The situation from the proof of Proposition~\ref{prop:TC0-reduction}.
    Horizontal lines represent shifted copies of the intervals $I(u_i) = [a_i,b_i]$.
   The lengths of the read lines are the absolute values of the shifts $\sigma(u_i)$. The blue shaded regions form the set $C$. The green shaded regions form the set $B = I \protect\setminus C$.}
\end{figure}
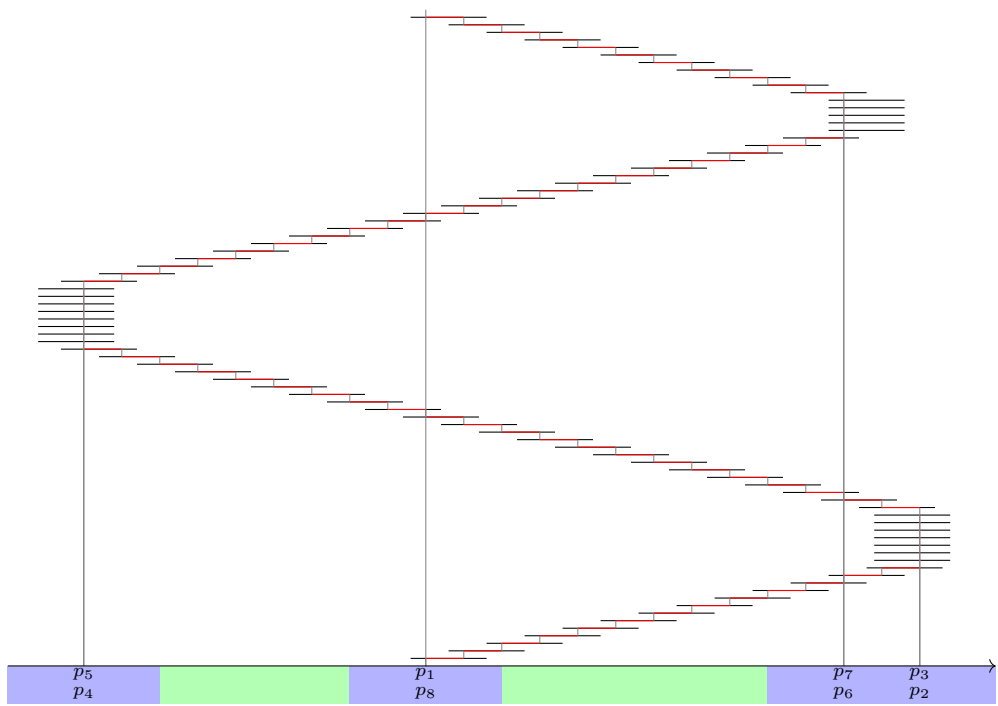
  \fi 
  
  \ifFullVersion
  \else
  \subparagraph*{Proofs of \prettyref{thm:tc0-wreath} and \ref{thm:wreath-coNP}.}
 A conjunctive truth-table reduction is a Turing reduction where the output is the conjunction over the outputs of all oracle gates; see 
 Appendix~\ref{appendix-C}.
 \fi
 
\begin{proposition} \label{prop:TC0-reduction}
For every finitely generated group $G$,  $\PowWP(G \wr \Z)$ is conjunctive truth-table \TC-reducible to $\MEM(G^\rho_{\ast})$
and $\PowWP(G)$.
\end{proposition}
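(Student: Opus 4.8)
The idea is to handle the two coordinates of $G\wr\Z$ separately. Write the input power word as $p_1^{x_1}\cdots p_n^{x_n}$ over $\Sigma\cup\{a,a^{-1}\}$ and let $N$ be its total length. The $\Z$-coordinate of $p_1^{x_1}\cdots p_n^{x_n}$ is $\sum_{i=1}^n x_i\,\sigma(p_i)$; each $\sigma(p_i)$ is a sum of $\pm1$'s and the outer expression is a binary multiplication followed by an iterated addition, so deciding whether the $\Z$-coordinate is $0$ is in $\TC$. It remains to test whether the $G^{(\Z)}$-coordinate $g\colon\Z\to G$ of the product is the constant $1$, and this is where the two oracles enter.

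First I would lay out the geometry. Put $t_0=0$ and $t_i=t_{i-1}+x_i\sigma(p_i)$ (the cursor position after reading $p_1^{x_1}\cdots p_i^{x_i}$); these, the windows $[b_i,c_i]:=t_{i-1}+I(p_i^{x_i})$, and the sizes $s_i:=|I(p_i)|\le|p_i|$ are polynomial-size integers computable in $\TC$. By the wreath-product multiplication rule, $g=\bigotimes_{i=1}^n g_i$, where $g_i$ is the contribution of $p_i^{x_i}$, supported inside $[b_i,c_i]$. If $\sigma(p_i)\ne0$, then by \prettyref{lem:periodic-wreath} the function $g_i$ is periodic with period $q_i:=|\sigma(p_i)|\le|p_i|$ on the \emph{core} $[b_i+s_i,\,c_i-s_i]$, and on the two end intervals of length $\le s_i$ (the \emph{transient} parts) it is some function determined by $p_i$. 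If $\sigma(p_i)=0$, then $g_i$ is supported in $t_{i-1}+I(p_i)$ (length $\le|p_i|$) and equals $z\mapsto f_i(z-t_{i-1})^{x_i}$ there, where $f_i$ is the $G^{(\Z)}$-part of $p_i$.

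Next I would let $C\subseteq\Z$ be the union of all transient intervals (for the $i$ with $\sigma(p_i)\ne0$) together with all supports $t_{i-1}+I(p_i)$ (for the $i$ with $\sigma(p_i)=0$), slightly enlarged so that whatever remains of each core is either empty or has length $\ge q_i$; then $C$ is a union of $O(n)$ intervals of polynomial size, given by their endpoints. Removing $C$ and then cutting at all window endpoints $b_i,c_i$ decomposes $\Z$ into $O(n)$ pieces $J_1,\dots,J_r$ plus two infinite rays on which $g\equiv1$; by construction, for each $i$ and each piece $J_t$ either $J_t\cap[b_i,c_i]=\emptyset$ or $J_t\subseteq[b_i+s_i,c_i-s_i]$. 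Hence on $J_t=[\alpha_t,\beta_t]$ we get $g|_{J_t}=\bigotimes_{i\in A_t}g_i|_{J_t}$, where $A_t$ is the set of active indices, and each $g_i|_{J_t}$ is the restriction of a globally $q_i$-periodic function; shifting its base point to $\alpha_t$ turns it into a periodic function presented by a tuple $u_i^{(t)}\in G^+$ of length $q_i$ whose entries are explicit words over $\Sigma$, so triviality of $g$ on $J_t$ is exactly whether $\bigotimes_{i\in A_t}f_{u_i^{(t)}}\in G^\rho_{|J_t|}$, i.e.\ one query to $\MEM(G^\rho_{\ast})$ ($O(n)$ queries, polynomial size each). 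All query inputs are $\TC$-computable: for a single position $z$ (inside $C$, or of the form $\alpha_t+j$ needed for a tuple entry), the proof of \prettyref{lem:periodic-wreath} writes $g_i(z)$ as a product of the values $f_i(z-t_{i-1}-jq_i)$ over the copies of $p_i$ covering $z$, of which only $O(s_i/q_i)\le|p_i|$ are nontrivial, and each nontrivial factor is a scattered subword-product of $p_i$ over $\Sigma$ extractable in $\TC$ (compute running cursor positions, select the $\Sigma$-letters at the required cursor value, concatenate in order); the lone exception is a factor from an $i$ with $\sigma(p_i)=0$, where it is the genuine power word $f_i(z-t_{i-1})^{x_i}$ — this is exactly why a $\PowWP(G)$ oracle rather than only $\WP(G)$ is needed. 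Assembling: $g\equiv1$ iff $g(z)=1$ for every $z\in C$ and $g|_{J_t}\equiv1$ for every $t$, so the reduction outputs, for each $z\in C$, the power word $\prod_i g_i(z)$ as a $\PowWP(G)$-query ($O(N^2)$ queries of polynomial size), for each $J_t$ the $\MEM(G^\rho_{\ast})$-query above, and conjoins all answers with the $\TC$-test ``$\sum_i x_i\sigma(p_i)=0$'' (folded in, e.g.\ by emitting a fixed false query when it fails; the trivial-$G$ case is handled directly). This is a conjunctive truth-table $\TC$-reduction, as claimed.

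The main obstacle I expect is not conceptual but the bookkeeping in the middle steps: one must enlarge $C$ just enough to keep it of polynomial size while guaranteeing that every surviving piece $J_t$ lies strictly inside the core of each $i\in A_t$ and is long enough for one full period of $g_i$ to be read off and phase-aligned (so the offset $\alpha_t\bmod q_i$ must be computed correctly), and one must check that all this interval arithmetic on polynomially-many-bit integers stays inside $\TC$. The point that makes the argument work is that although the exponents $x_i$ are exponentially large, they only influence the \emph{locations} $[b_i,c_i]$ of the finitely many periodic patterns, never their periods $q_i=|\sigma(p_i)|$, which stay bounded by the input length; specialized to abelian $G$ this yields \prettyref{thm:tc0-wreath} via \prettyref{thm:abelian-membership}, and it also underlies the upper bound of \prettyref{thm:wreath-coNP}.
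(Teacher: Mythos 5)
Your proposal is correct and follows essentially the same route as the paper's proof: compute the cursor positions and windows in \TC, cover the ``problematic'' positions (ends of windows and supports of zero-shift powers) by a polynomial-size set $C$ handled with one $\PowWP(G)$-query per point, and handle the remaining (possibly exponentially long) intervals via \prettyref{lem:periodic-wreath} as pointwise products of small-period functions, i.e.\ as $\MEM(G^\rho_{\ast})$-queries. The only difference is cosmetic: the paper takes $C$ to be the $\ell$-neighbourhoods of the cursor positions rather than your transient-plus-zero-shift description, which yields the same decomposition.
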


\ifFullVersion
\begin{proof}
Figure~\ref{fig-wreath-product} illustrates the idea of the proof.
Let $w = u_1^{x_1} u_2^{x_2} \cdots u_k^{x_k}$
	be the input power word and
	let $(f_i,d_i) \in G \wr \Z$ be the element represented by $u_i$.
	By counting $a$'s and $a^{-1}$'s in words, we can
	compute in $\TC$ the following data:
	\begin{itemize}
		\item the numbers $d_i$ for $1 \leq i \leq k$,
		\item the (binary encodings of the endpoints of the) intervals $I(u_i) =: [a_i,b_i]$ for $1 \leq i \leq k$,
		\item the mappings $f_i$, which are represented as mappings $f_i : [a_i,b_i] \to \Sigma^*$ for $1 \leq i \leq k$,
		\item the binary encodings of the positions 
		\[
		p_i  =  \sum_{1\leq j < i} x_j d_j \ \text{ for $1 \leq i \leq k+1$,}
		\]	
		\item the intervals $I_i :=  [ \min\{ p_i, p_{i+1}-d_i\}+a_i, \max\{ p_i, p_{i+1}-d_i\}+b_i]$
	       for $1 \le i \le k$.
	\end{itemize}
	Note that $p_1 = 0$, $I_i = p_i + I(u_i^{x_i})$ and that $I_i = p_i + I(u_i)$ if $d_i = 0$.
	Moreover, we define the interval
	\[
	I = \bigcup_{1 \leq i \leq k} I_i  
	\]
	(this is indeed an interval since $p_{i+1} \in I_i$).
	Note that if $w$
	represents the group element $(f,p)$, then $p = p_{k+1}$. Hence, if $p_{k+1} \neq 0$, we can reject.
	Let us assume that $p_{k+1} = 0$ for the rest of the proof.

	Let $\ell = \max \{ b_i-a_i+1 \mid 1 \leq i \leq k \}$ be the maximal size of the intervals $I(u_i) = [a_i,b_i]$. 
	and let
	\[C := \bigcup_{i=1}^k [p_i - \ell, p_i + \ell],\] which is the union of all $\ell$-neighbourhoods of the points $p_i$.
	This set has polynomial size, and we can compute in $\TC$ a list of its elements.
	Note that $I_i \subseteq C$ if $d_i = 0$.
	
	In order to check whether $w=1$ in $G \wr \Z$, we proceed as follows:
	Let $(f,0)$ be the  group element represented by $w$.
	The support of $f$ is contained in $I$. Thus,
	it suffices to check $f(m)=1$ for all $m \in I$.
	For this, we will check whether (i) $f(m)=1$ for all $m \in C$ and (ii) $f(m)=1$ for all $m \in I \setminus C$.
	We will reduce in $\TC$ the verification of (ii) to polynomially many
	instances of $\MEM(G^\rho_{\ast})$.
	Before we do this, we first deal with (i) using
	the following claim:
	
	\medskip
	\noindent
	{\em Claim 1.} From a binary encoded  integer $m$ we can compute in $\TC$ an input instance $w_m$
	of $\PowWP(G)$ that evaluates to $1$ in $G$ if and only if $f(m)=1$ in $G$.
	
	\medskip
	\noindent
	{\em Proof of Claim 1.} We compute $w_m$ by  replacing each of the powers $u_i^{x_i}$ in our input instance $w$
	by the following rules: If $m \not\in I_i$ then we replace $u_i^{x_i}$
	by the empty word. Otherwise, we do the following:
	\begin{itemize}
		\item If $i \in A$ then we replace $u_i^{x_i}$ by $f_i(m-p_i)^{x_i}$.
		\item If $i \not\in A$, then we compute in $\TC$ the set $Q := \{ q \mid q \in [0,x_i-1], m-p_i \in q d_i + I(u_i) \}$. This is an interval
		of polynomial size. We then replace $u_i^{x_i}$ by the word
		\[
		\prod_{q \in Q} f_i(m-p_i - q d_i) \in \Sigma^* .
	\]
	\end{itemize}
	From the construction it follows that $w_m$ evaluates to the group element $f(m) \in G$.
	
	\medskip
	\noindent
	Our conjunctive truth-table \TC-reduction now outputs for every $m \in C$ the power word $w_m$.
	
	We now deal with (ii): We have to check whether $f(m)=1$ for all $m \in I \setminus C$.
	The crucial observation is that the set $B := I \setminus C$ splits into a small number of intervals,
	which can be large (at most exponential in the input length) but 
	on which $f$ is a product of periodic functions as defined in Section~\ref{sec-periodic}. 
	This allows us to reduce to the problem $\MEM(G^\rho_{\ast})$.
	We can write $B$ as a union of polynomially many disjoint intervals ($I$ is an
	interval and we remove from $I$ polynomially many points), and we can 
	compute the endpoints of these intervals $\TC$. 
	For every such interval $[b,c]$, our conjunctive truth-table \TC-reduction outputs an 
	instance of $\MEM(G^\rho_{\ast})$ that is positive if and only if $f(m)=1$ for all $m \in [b,c]$.
	The important fact is that if $[b,c]$ intersects an interval $I_i =: [a'_i,b'_i]$ then $d_i \neq 0$ 
	and $[b,c] \subseteq [a'_i+\ell,b'_i-\ell]$. Hence, if $(g_i, p_i+x_i d_i)$ is the group element represented by
	$a^{p_i} u_i^{x_i}$, then Lemma~\ref{lem:periodic-wreath} implies that the function $g_i$ is periodic on the interval $[b,c]$ with
	period $d_i$. Hence, $f$ restricted to $[b,c]$ can be obtained as the pointwise multiplication of a small number of periodic functions
	(one for each $i$ such $[b,c]$ intersects the interval $I_i$) with small period (namely, $d_i$). We can compute these periodic functions 
	(represented by non-empty words over $G$) easily in $\TC$. By shifting the interval $[b,c]$ to $[0,c-b]$ we obtain 
	the desired instance of $\MEM(G^\rho_{\ast})$.
\end{proof}

\begin{proof}[Proof of \prettyref{thm:tc0-wreath}]
	For a finitely generated abelian group, one can solve $\PowWP(G)$ in $\TC$ using the fact that
	multiplication and iterated addition on binary encoded integers can be done in $\TC$. 
	Hence, \prettyref{thm:tc0-wreath} is a consequence of \prettyref{prop:TC0-reduction} and \prettyref{thm:abelian-membership}.
\end{proof}
\else
\begin{proof}[Proof sketch]
Let $w = u_1^{x_1} u_2^{x_2} \cdots u_k^{x_k}$
be the input power word and
let $(f,d)\in G \wr \Z$
be the element represented by  $w$. We can check in $\TC$ whether $d=0$.
The difficult part is to check whether $f$ is the zero-mapping.
For this we compute an interval $I$ (of exponential size) that contains the support of $f$. 
We then partition $I$ into two sets $C$ and $I \setminus C$. The set $C$ has polynomial size 
and we can check whether $f$ is the zero-mapping on $C$ using polynomially many oracle calls to 
$\PowWP(G)$. The complement $I \setminus C$ can be written as a union of polynomially many
intervals. The crucial property of $C$ is that on each of these intervals $f$ can be written as a sum
of periodic sequences; for this we use \prettyref{lem:periodic-wreath}. Using oracle calls to $\MEM(G^\rho_{\ast})$
allows us to check whether $f$ is the zero mapping on $I \setminus C$.
\end{proof}
	Since for a finitely generated abelian group $G$, one can solve $\PowWP(G)$ in $\TC$, \prettyref{thm:tc0-wreath} is a consequence of \prettyref{prop:TC0-reduction} and \prettyref{thm:abelian-membership}.
\fi

\ifFullVersion
\section{Proof of \prettyref{thm:wreath-coNP}}\label{sec:proof-wreath-coNP}
\fi

We split the proof of \prettyref{thm:wreath-coNP} into three propositions: one for the upper bound and two for the lower bounds.
\ifFullVersion
For the upper bound we first show the following simple lemma:

\begin{lemma} \label{lem:WP->MEM}
If the word problem for the finitely generated group $G$ belongs to \coNP, then also $\MEM(G^\rho_{\ast})$ belongs to \coNP.
\end{lemma}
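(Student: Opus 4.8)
The plan is to show that the complement $\overline{\MEM(G^\rho_{\ast})}$ belongs to $\NP$; this immediately yields $\MEM(G^\rho_{\ast}) \in \coNP$. So fix an instance consisting of tuples $u_1, \dots, u_n \in G^+$, write $u_i = (u_i[0], \dots, u_i[q_i-1])$ with $q_i = |u_i|$ and each $u_i[j] \in \Sigma^*$, and let $m$ be the accompanying binary integer. Set $f = \bigotimes_{i=1}^n f_{u_i}$. By the definition of $G^\rho_m$, the answer to the instance is ``yes'' exactly when $f(k) = 1$ in $G$ for every $k$ with $0 \le k \le m-1$, hence ``no'' exactly when there is some such $k$ with $f(k) \ne 1$ in $G$.

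The first step is to record that, for a fixed $k$, the value $f(k)$ is represented by the word
\[
v_k \;:=\; u_1[k \bmod q_1]\; u_2[k \bmod q_2] \cdots u_n[k \bmod q_n] \;\in\; \Sigma^*,
\]
because $f_{u_i}(k) = u_i[k \bmod q_i]$. Every $q_i$ is bounded by the input size, so all residues $k \bmod q_i$, and hence the word $v_k$ itself, can be computed from $k$ (given in binary) in deterministic polynomial time, and $|v_k| \le \sum_i |u_i|$ is polynomially bounded.

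Next I would spell out the nondeterministic algorithm for the complement: on input $(u_1,\dots,u_n,m)$ it guesses an integer $k$ with $0 \le k \le m-1$ --- which has at most as many bits as $m$, hence polynomially many --- computes $v_k$, and accepts if and only if $v_k$ does not represent $1$ in $G$. Since $\WP(G) \in \coNP$, the latter test is itself an $\NP$-predicate in $v_k$, so it can be carried out by guessing an additional witness and verifying it in polynomial time. The composition of the two guessing phases still runs in nondeterministic polynomial time, and it accepts precisely the no-instances of $\MEM(G^\rho_{\ast})$. Therefore $\overline{\MEM(G^\rho_{\ast})} \in \NP$, i.e.\ $\MEM(G^\rho_{\ast}) \in \coNP$.

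I do not expect a real obstacle. The only subtlety worth noting is that both the index range $\{0,\dots,m-1\}$ and the common period $\mathrm{lcm}(q_1,\dots,q_n)$ of $f$ may be exponential in the input length, so one cannot reduce conjunctively to polynomially many queries to $\WP(G)$; using a single nondeterministically guessed index $k$ is exactly what circumvents this.
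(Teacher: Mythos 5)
Your proposal is correct and follows essentially the same route as the paper's proof: guess an index $k \in [0,m-1]$ (universally in the paper's \coNP{} phrasing, existentially for the complement in yours), reduce $f(k)$ to the polynomial-size word $u_1[k \bmod q_1]\cdots u_n[k \bmod q_n]$ via the residues, and then invoke the (co)\NP{} word problem for $G$. The two formulations are equivalent, so nothing further is needed.
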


\begin{proof}
Assume that the word problem for $G$ belongs to \coNP. Fix a finite symmetric generating set for $G$.
Consider an input for $\MEM(G^\rho_{\ast})$, i.e., words
$u_1, \ldots, u_n \in G^+$ (elements of $G$ are represented by words from $\Sigma^*$) 
and a binary encoded number $m$.
Let $f = \bigotimes_{i=1}^n f_{u_i} : \mathbb{N} \to G$. We have to check in \coNP whether $f(k) = 1$ for all $k \in [0,m-1]$.
For this, we first guess universally a binary encoded number $k \in [0,m-1]$. Then we compute for all 
$i \in [1,n]$ the remainder $r_i = k \bmod |u_i|$ and compute the word $w_k := u_1[r_1]  \cdots u_n[r_n]$ where $u_i[r_i]$ denotes the $(r_i+1)$-st letter of $u_i$.
Since every group element in a word $u_i$ is given as a word over $\Sigma$, we can view $w_k$ as a word 
over $\Sigma$. By construction of $w_k$, it evaluates to the group element $f(k)$. 
Since $w=1$ can be checked by a \coNP-machine, we obtain a \coNP-machine for $\MEM(G^\rho_{\ast})$.
\end{proof}
\else
It is straightforward to show that if the word problem for the finitely generated group $G$ belongs to \coNP, then also $\MEM(G^\rho_{\ast})$ belongs to \coNP.
Since \coNP is closed under conjunctive truth-table \TC-reducibility,
  \prettyref{prop:TC0-reduction} yields:

\fi

\begin{proposition} \label{prop:powWP-coNP}
Let $G$ be a finitely generated group such that $\PowWP(G)$ belongs to \coNP. Then also 
$\PowWP(G \wr \mathbb{Z})$ belongs to \coNP. 
\end{proposition}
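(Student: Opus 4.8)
The plan is to assemble the statement from \prettyref{prop:TC0-reduction} together with the two transfer facts about \coNP-membership. First I would note that the word problem $\WP(G)$ is the special case of $\PowWP(G)$ in which every exponent equals $1$; hence the hypothesis $\PowWP(G) \in \coNP$ immediately gives $\WP(G) \in \coNP$. Applying \prettyref{lem:WP->MEM} then yields $\MEM(G^\rho_{\ast}) \in \coNP$: on input $u_1,\dots,u_n \in G^+$ and a binary number $m$, one universally guesses a position $k \in [0,m-1]$, forms the word $w_k = u_1[r_1]\cdots u_n[r_n]$ with $r_i = k \bmod \abs{u_i}$, reads it as a word over $\Sigma$, and checks $w_k =_G 1$ with the \coNP-machine for $\WP(G)$; since $w_k$ evaluates to $(\bigotimes_{i=1}^n f_{u_i})(k)$, this verifies $\bigotimes_{i=1}^n f_{u_i} \in G^\rho_m$.

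Next I would invoke \prettyref{prop:TC0-reduction}, which provides a conjunctive truth-table \TC-reduction from $\PowWP(G \wr \Z)$ to the two oracle languages $\MEM(G^\rho_{\ast})$ and $\PowWP(G)$. By the previous paragraph and the hypothesis, both oracle languages lie in \coNP. It therefore suffices to observe that \coNP is closed under conjunctive truth-table \TC-reducibility --- this is precisely \prettyref{lem:conjunctive-truth-table} (conjunctive truth-table polynomial-time reducibility would already suffice): given the list $w_1,i_1,\dots,w_d,i_d$ output by the \TC-transducer, a \coNP-machine first computes this list deterministically in polynomial time, then universally branches over $j \in \{1,\dots,d\}$ and simulates the \coNP-machine for $B_{i_j}$ on $w_j$, accepting iff every branch accepts. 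Combining the three steps gives $\PowWP(G \wr \Z) \in \coNP$, which is the claim.

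The argument is thus essentially a concatenation of previously established facts, and the only point demanding a little attention --- the closest thing to an obstacle --- is the bookkeeping hidden in \prettyref{prop:TC0-reduction}: one must rely on the fact that its \TC-reduction emits only polynomially many queries, each of polynomial size, and that the overall acceptance condition is genuinely the conjunction of the individual oracle answers. Both are part of the statement of \prettyref{prop:TC0-reduction} (it is explicitly a \emph{conjunctive truth-table} reduction, and a \TC-computation is in particular a polynomial-time computation), so no verification beyond quoting the cited results is needed.
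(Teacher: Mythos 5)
Your proposal is correct and follows essentially the same route as the paper: deduce $\WP(G)\in\coNP$ from the hypothesis, apply Lemma~\ref{lem:WP->MEM} to place $\MEM(G^\rho_{\ast})$ in \coNP, and then combine Proposition~\ref{prop:TC0-reduction} with the closure of \coNP under conjunctive truth-table \TC-reducibility (Lemma~\ref{lem:conjunctive-truth-table}). The extra details you supply (the universal guess of $k$ in the membership test and the universal branching over the queries) are just the proofs of the two cited lemmas spelled out, so nothing is missing or different in substance.
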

\ifFullVersion
\begin{proof}
Assume that $\PowWP(G)$ belongs to \coNP. Then also the word problem for $G$ belongs to \coNP.
By \prettyref{lem:WP->MEM}, $\MEM(G^\rho_{\ast})$ belongs to \coNP. Finally, by 
\prettyref{prop:TC0-reduction} and \prettyref{lem:conjunctive-truth-table}, $\PowWP(G \wr \mathbb{Z})$ belongs to \coNP. 
\end{proof}
\fi

\ifFullVersion
The upper bound in \prettyref{thm:wreath-coNP} is an immediate consequence 
of \prettyref{prop:powWP-coNP}, \prettyref{thm:free_power_wp} and 
\prettyref{thm:finite_index}.
\fi

\ifFullVersion
\begin{proposition}\label{prop:powWP-free-coNPhard}
Let $F$ be a finitely generated free group of rank at least two. Then $\PowWP(F \wr \mathbb{Z})$ is \coNP-hard.
\end{proposition}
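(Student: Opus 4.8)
The plan is to prove $\coNP$-hardness by a polynomial-time many-one reduction from $3$-\textsc{Unsat}: given a $3$-CNF formula $\phi$ over variables $y_1,\dots,y_m$ with clauses $C_1,\dots,C_r$, decide whether $\phi$ is unsatisfiable; this is $\coNP$-complete. First I would fix the first $m$ distinct primes $p_1<\cdots<p_m$ (all of polynomial size) and set $N=\prod_{j=1}^m p_j$, a number with $O(m\log m)$ bits. For $k\in[0,N)$ let $x(k)\in\{0,1\}^m$ be the assignment with $x(k)_j=(k\bmod p_j)\bmod 2$. By the Chinese remainder theorem every assignment equals $x(k)$ for some $k\in[0,N)$ (for each $j$ one can pick the residue of $k$ modulo $p_j$ with prescribed parity, since $0,1\in[0,p_j)$). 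Hence $\phi$ is unsatisfiable iff $x(k)\not\models\phi$ for all $k\in[0,N)$.

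The idea is to construct in polynomial time a power word $W$ over $F\wr\Z$ whose associated lamp configuration $f\colon\Z\to F$ satisfies: $\sigma(W)=0$, $f$ is supported on $[0,N)$, and for each $k\in[0,N)$ the value $f(k)$ is a fixed nontrivial $g\in F$ if $x(k)\models\phi$ and $1$ otherwise. Then $W=1$ in $F\wr\Z$ iff $f\equiv 1$ iff no $x(k)$ satisfies $\phi$ iff $\phi$ is unsatisfiable. To build $f$ I use ``clocks'': a power $p^y$ with $p=w\,a^q$ ($w$ a length-$q$ word over $F$, then $q$ steps of the $\Z$-generator $a$) and $y=N/q$ contributes, at each position $k\in[0,N)$, the value $w[k\bmod q]$; the periods $q$ used will all be among the $p_j$, which divide $N$, so the exponents have polynomially many bits and the period patterns $w$ have polynomial length. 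Inserting $a^{-N}$ between successive clocks brings the cursor back to $0$ (without changing any lamp), and one final power of $a$ enforces $\sigma(W)=0$; then $f(k)$ is the left-to-right product of the clock contributions at $k$, each contribution being a \emph{bit-controlled} element: it equals a chosen element of $F$ precisely when a predicate of the form ``literal $\lambda$ is true/false under $x(k)$'' holds (a predicate periodic in $k$ with period $p_j$), and $1$ otherwise.

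It remains to realize the Boolean evaluation $[\,x(k)\models\phi\,]$ by a polynomial-length group word in such bit-controlled factors. A clause $C_\ell=(\lambda_1\vee\lambda_2\vee\lambda_3)$ is satisfied iff at least one literal is true, and small disjunctions are ``free'': three clocks that each emit a fixed element $\alpha_\ell$ when the respective literal is true (and $1$ otherwise) contribute $\alpha_\ell^{t_\ell(k)}$ at position $k$, where $t_\ell(k)\in\{0,1,2,3\}$ counts true literals; since $\alpha_\ell$ has infinite order in $F$, this is $\ne 1$ iff $C_\ell$ is satisfied. Finally $x(k)\models\phi$ is the conjunction of the $r$ clause-indicators, and to turn an $r$-fold conjunction into a group word of \emph{polynomial} length one builds a \emph{balanced} binary tree of commutators in the spirit of Barrington's theorem: a node with subtrees computing elements $A,B$ (built from disjoint sets of generators, as the subtrees involve disjoint clauses) computes $[A,B]=ABA^{-1}B^{-1}$, the inverse occurrences being realized by further clocks emitting inverse symbols in reverse order. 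Choosing the $\alpha_\ell$ to be distinct elements of a basis of a free subgroup of $F$ of polynomial rank — which exists since $F$ has rank at least two — one checks (e.g.\ via the lower central series) that the resulting iterated commutator $g$ is nontrivial exactly when every clause-indicator is ``on'' and collapses to $1$ as soon as one of them is $1$, since $[X,1]=[1,X]=1$ propagates to the root. The tree has depth $O(\log r)$ and $O(r^2)$ leaf occurrences, so $W$ has polynomial size and is polynomial-time computable.

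The main obstacle is exactly this last step: realizing the evaluation of $\phi$ — and in particular the $r$-fold conjunction over clauses — by a group word of polynomial length, together with the free-group bookkeeping needed to verify that the balanced commutator tree produces a nontrivial element when all inputs are on and vanishes when any input is off. The naive linearly-nested chain of commutators has exponential length, so the balanced Barrington-style construction is essential, and the argument that the ``generic'' iterated commutator over a free group is nontrivial (while degenerating correctly) is the heart of the proof. A secondary, routine issue is tracking the $\Z$-shifts of the clocks so that all of them overlap on the window $[0,N)$ while $\sigma(W)=0$.
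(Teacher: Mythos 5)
Your proposal is correct and matches the paper's proof in all essentials: positions in $[0,N)$ encode truth assignments via the Chinese remainder theorem over the first primes, period-$p_j$ ``clock'' powers produce clause indicators lying in cyclic subgroups generated by distinct basis elements of an embedded free group of polynomial rank, and a balanced (Barrington-style) binary tree of commutators over disjoint generator sets realizes the conjunction over clauses in polynomial size, using that $[A,B]=1$ iff $A=1$ or $B=1$ for elements of free factors on disjoint parts of a basis. The only deviations are cosmetic: the paper encodes truth of a variable $x_i$ at position $z$ by divisibility $p_i \mid z$ (with separate clock gadgets for positive and negative literals) rather than by the parity of $z \bmod p_i$, and it verifies the nontriviality of the iterated commutators directly from the free-product structure of the disjointly generated subgroups rather than via the lower central series.
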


\begin{proof}
	Since $F_2$ contains an isomorphic copy of $F$, it suffices to consider the wreath product $F_2 \wr \Z$.
	We prove \coNP-hardness (with respect to logspace reductions) by a reduction from the complement of the 
	satisfiability problem for boolean formulas in conjunctive normal form. Let $C = \bigwedge_{j=1}^m C_j$, where every
	$C_j$ is a clause, i.e., a disjunction of literals (possibly negated boolean variables). W.l.o.g. we can assume that $m = 2^l$
	for some $l \geq 0$.
	Let $x_1, \ldots, x_n$ be the variables appearing in $C$.
	We consider every $C_j$ as a subset of $\{x_1, \neg x_1, \ldots, x_n, \neg x_n \}$. Let $p_i$ be the $i$-th prime number for 
	$1 \leq i \leq n$; it is of order $i \cdot \ln i$. Let $M = \prod_{i=1}^n p_i$. The unary encodings of the primes $p_1, \ldots, p_n$
	and the binary encoding of the number $M$ can be computed in logspace.
	Moreover, let us define for every $1 \leq j \leq m$ (i.e., for every $C_j$) the sets
	\begin{eqnarray*}
		I^+_j &=& \{ i \mid 1 \leq i \leq n, x_i \in C_j \} \qquad \text{ and }\\
		I^-_j  &=& \{ i \mid 1 \leq i \leq n, \neg x_i \in C_j \}.
	\end{eqnarray*}
	This means $I^+_j$ are the indices of positive literals and $I^-_j$ the indices of negative literals appearing in the clause $C_j$.
	It suffices to show that the power word problem for $F_m \wr \Z$ is \coNP-hard. Note that here, $m$ is the number
	of clauses $C_j$ in $C$. The group $F_m$ (with free generators $a_1, \ldots, a_m$) can be embedded into $F_2$ (with 
	free generators $a,b$) via the morphism $a_i \mapsto a^{-i} b a^i$, and this morphism can be computed by a logspace
	transducer. For the right factor $\Z$ of the wreath product we choose the generator $b$. Hence, $\Sigma := \{ a_1, a_1^{-1},\ldots, 
	a_m, a_m^{-1}, b, b^{-1}\}$ is a symmetric generating set for the wreath product $F_m \wr \Z$.
	We first define power words $w^+(i,j)$ for all $i \in I^+_j$ and power words $w^-(i,j)$ for all $i \in I^-_j$:
	\begin{eqnarray}
	w^+(i,j) &=& (a_j b^{p_i})^{M/p_i} b^{-M}\qquad \text{ and } \label{eq-word-w+} \\
	w^-(i,j) &=& ((b a_j)^{p_i-1} b)^{M/p_i} b^{-M} . \label{eq-word-w-}
	\end{eqnarray}
	Note that the exponents $p_i$ and $p_i-1$ are of polynomial size in $n$. 
	Hence, the words $b^{p_i}$ and $(b a_j)^{p_i-1}$ can be written down explicitly by a logspace transducer.
	The exponents $M/p_i$ and $-M$ will be written down in binary notion.
	Next, for every clause $C_j$ we define the power word 
	\[
	w(C_j) = \prod_{i \in I^+_j } w^+(i,j) \prod_{i \in I^-_j } w^-(i,j) .
	\]
	The following claim is then easy to verify:
	
	\medskip
	\noindent
	{\em Claim 1.} Assume that the word $w(C_j)$ evaluates to the group element $(f_j, k_j)$ in the wreath product $F_m \wr \mathbb{Z}$.
	Then the following properties hold:
	\begin{enumerate}[(a)]
		\item \label{a} $k_j = 0$,
		\item \label{b} $f_j(z) = 1$ for all $z \in \mathbb{Z} \setminus [0,M-1]$,
		\item  \label{c} for all $z \in [0,M-1]$ we have $f_j(z) \neq 1$ if and only if 
		either there is some  $i \in I^+_j$ such that $p_i$ divides $z$ or there is some $i \in I^-_j$ such that $p_i$ does not divide $z$,
		\item \label{d} $f_j(z) \in \langle a_j \rangle$ for all $z \in \mathbb{Z}$.
	\end{enumerate}
	From \ref{c} it follows that  the following three statements are equivalent:
	\begin{enumerate}[(a)]
		\setcounter{enumi}{4}
		\item \label{e} $C$ is satisfiable.
		\item \label{f} $\exists z \in [0,M-1] \, \forall j \in [1,m] \, ( \exists i \in I^+_j : p_i$ divides $z$ or 
		$\exists i \in I^-_j : p_i$ does not divide $z$).
		\item \label{g} $\exists z \in [0,M-1] \, \forall j \in [1,m] \colon f_j(z) \neq 1$.
	\end{enumerate}
	We now reduce the latter statement to an instance of the power word problem for $F_m \wr \mathbb{N}$ using 
	a balanced binary tree of commutators. More precisely, let us define
	power words $W_{d,j}$ for $d \in [0,l]$ and $j \in [1,2^{l-d}]$ as follows (recall $m = 2^l$):
	\begin{itemize}
		\item $W_{0,j} = w(C_j)$ for $j \in [1,2^{l}] = [1,m]$,
		\item $W_{d,j} = [W_{d-1,2j-1}, W_{d-1,2j}]$ for $d \in [1,l]$ and $j \in [1,2^{l-d}]$.
	\end{itemize}
	Finally, let $W = W_{l,1}$.
	
	\medskip
	\noindent
	{\em Claim 2.} The length of $W$ is polynomially bounded in $m$ and $n$.
	Every binary encoded exponent has at most $\log M$ many bits, which is of size
	$\mathcal{O}(n \cdot \log n)$. The periods of $W$ are of length at most $2 p_n \in \mathcal{O}(n \cdot \log n)$;
	see \eqref{eq-word-w+} and  \eqref{eq-word-w-}.
	It remains to bound the number of powers $p^x$ in $W$. Every power word $w^+(i,j)$ and $w^-(i,j)$ consists of two powers.
	Hence, every power word $W_{0,j} = w(C_j)$ consists of at most $2n$ powers. By induction on $d$, it follows
	that every power word $W_{d,j}$ consists of at most $2 n 4^d$ powers. Hence, $w$ consists of 
	$2 n 4^l = 2 n m^2$ powers.
	
	\medskip
	\noindent
	{\em Claim 3.} $W=1$ in $F_m \wr \mathbb{Z}$ if and only if $C$ is not satisfiable.
	Let $(f_{d,j},k_{d,j})$ be the group element of $F_m \wr \mathbb{Z}$ represented by $W_{d,j}$; in particular,
	$f_{0,j} = f_j$.
	Consider arbitrary $d$ and $j$ with $d \in [0,l]$ and $j \in [1,2^{l-d}]$
	From the above points \ref{a} and \ref{b} it follows by induction on $d$ that $k_{d,j}=0$ and $f_{d,j}(z) = 0$ for all
	$z \in \mathbb{Z} \setminus [0,M-1]$. Moreover, \ref{d} implies that $f_{d,j}(z) \in  \langle a_{(j-1)2^d+1}, \ldots, a_{j 2^d}\rangle$.
	
	The definition of a commutator in a wreath product implies the following identity
	for all $d \in [1,l]$, $j \in [1,2^{l-d}]$ and $z \in [0,M-1]$: 
	\[
	f_{d,j}(z) =  f_{d-1,2j-1}(z) \cdot  f_{d-1,2j}(z) \cdot f_{d-1,2j-1}(z)^{-1} \cdot  f_{d-1,2j}(z)^{-1}
	\]
	(multiplication on the right hand side is in the free group $F_m$).
	Since $f_{d-1,2j-1}(z) \in  \langle a_{(j-1)2^{d}+1}, \ldots, a_{(2j-1) 2^{d-1}} \rangle$ and 
	$f_{d-1,2j}(z) \in  \langle a_{(2j-1)2^{d-1}+1}, \ldots, a_{j 2^{d}} \rangle$
	it follows that the group elements $f_{d-1,2j-1}(z)$ and $f_{d-1,2j}(z)$ commute in $F_m$ if and only if 
	$f_{d-1,2j-1}(z)=1$ or $f_{d-1,2j}(z)=1$.
	Hence, we get $f_{d,j}(z) = 1$ if and only if $f_{d-1,2j-1}(z)=1$ or $f_{d-1,2j}(z)=1$.
	For the mapping $f_{l,1}$ we thus have for all $z \in [0,M-1]$:
	$f_{l,1} = 1$ if and only if there is some $j \in [1,m]$ with  $f_j(z) = 1$.
	With the above equivalence of points \ref{e} and \ref{g} it follows that 
	$C$ is satisfiable if and only if 
	$\exists z \in [0,M-1] \, \forall j \in [1,m] \colon f_j(z) \neq 1$ 
	if and only if $\exists z \in [0,M-1] \colon f_{l,1} \neq 1$.
	Since $k_{l,1} = 0$ and $W$ represents the group element $(f_{l,1},k_{l,1})$ it follows 
	that $C$ is satisfiable if and only if $W \neq 1$ in $F_m \wr \mathbb{Z}$. This concludes
	the proof of \coNP-hardness for the case that $G$ is a finitely generated free group.
\end{proof}

\fi

\begin{proposition}\label{prop:powWP-finite-coNPhard}
	If $G$ is a finite, non-solvable group, $\PowWP(G \wr \mathbb{Z})$ is \coNP-hard.
\end{proposition}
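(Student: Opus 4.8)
The plan is to mimic the reduction from the complement of the satisfiability problem used for free groups in the proof of \prettyref{prop:powWP-free-coNPhard}, but to replace the free generators $a_j$ — which there guarantee that a product of powers of distinct generators is trivial only if each factor is trivial — by the commutator‑based gadgets underlying the group‑theoretic generalisation of Barrington's theorem \cite{Barrington86}. Concretely, let $\Phi=\bigwedge_{j=1}^{m}C_j$ be a CNF over variables $x_1,\dots,x_n$, an input for \textsc{Unsat} (which is \coNP-complete). Let $p_i$ be the $i$-th prime and $M=\prod_{i=1}^{n}p_i$; as in the free case these can be computed in logarithmic space. We encode a truth assignment $\vec a\in\{0,1\}^n$ by an integer $z\in[0,M-1]$ via $a_i=1\iff p_i\mid z$, and by the Chinese remainder theorem every assignment arises from some such $z$.

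Since $G$ is finite and non-solvable, its word problem is \Nc1-complete, and more precisely every boolean formula — in particular $\Phi$ — is computed by a polynomial-length program over $G$: a sequence $(i_k,\sigma_k,\tau_k)_{k=1}^{\ell}$ with $i_k\in[1,n]$ and $\sigma_k,\tau_k\in G$ such that on input $\vec a$ the product $\prod_{k=1}^{\ell}\gamma_k$, where $\gamma_k=\sigma_k$ if $a_{i_k}=1$ and $\gamma_k=\tau_k$ otherwise, equals a fixed non-identity element $\sigma$ when $\Phi(\vec a)=1$ and equals $1$ when $\Phi(\vec a)=0$; moreover this program is constructible in logarithmic space from $\Phi$. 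This is the generalisation of Barrington's theorem to an arbitrary finite non-solvable group: its proof realises the accepting value $\sigma$ as a sufficiently deep iterated commutator inside a non-abelian simple section $M_0/N_0$ of $G$, using Ore's theorem (in a finite non-abelian simple group every element is a commutator) together with the fact that a deep iterated commutator which is non-trivial modulo $N_0$ is non-trivial in $G$.

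Now fix the generator $b$ of the right factor $\Z$ of $G\wr\Z$, and for each instruction let $u_k$ and $t_k$ be fixed words over the generators of $G$ representing $\sigma_k\tau_k^{-1}$ and $\tau_k$, and put
\[
w_k=\bigl(u_k\,b^{\,p_{i_k}}\bigr)^{M/p_{i_k}}\,b^{-M}\,\bigl(t_k\,b\bigr)^{M}\,b^{-M}.
\]
A direct computation (bookkeeping the cursor as in \prettyref{prop:powWP-free-coNPhard}) shows that $w_k$ has $\Z$-shift $0$ and represents $(c_k,0)\in G\wr\Z$, where $c_k$ is supported on $[0,M-1]$ and $c_k(z)=\sigma_k$ if $p_{i_k}\mid z$ and $c_k(z)=\tau_k$ otherwise; thus $c_k$ reads the bit $[\,p_{i_k}\mid z\,]$ exactly as instruction $k$ reads $a_{i_k}$. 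The exponents $M/p_{i_k},M$ have $O(n\log n)$ bits and the periods have length $O(n\log n)$, so $w_k$, and hence the concatenation $W=w_1w_2\cdots w_\ell$, is a power word of polynomial length computable in logarithmic space. By construction $W$ represents $(f,0)\in G\wr\Z$ with $f(z)=\prod_{k=1}^{\ell}c_k(z)$; for $z\in[0,M-1]$ this equals the output of the program on the assignment $\vec a$ with $a_i=[\,p_i\mid z\,]$, and for $z\notin[0,M-1]$ it equals $1$. Hence $f(z)\neq1$ iff $z\in[0,M-1]$ and the corresponding assignment satisfies $\Phi$, so by the Chinese remainder theorem $W=_{G\wr\Z}1$ iff $f\equiv1$ iff $\Phi$ is unsatisfiable — the desired reduction, whence \coNP-hardness.

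The main obstacle is the algebraic input of the second step: unlike in the free case, a product of non-trivial elements of a fixed finite group can be trivial, so one cannot encode the OR inside a clause and the AND across clauses by plain concatenation together with a balanced tree of commutators of powers of distinct generators. Instead the whole boolean computation must be routed through the commutator machinery of Barrington's theorem, and the delicate point there is to exhibit, inside $G$, elements whose iterated commutators of depth $\Theta(\log|\Phi|)$ are guaranteed non-trivial — which one extracts from a non-abelian simple section of $G$ via Ore's theorem. Once this ingredient is available, the remaining parts (the periodic-placement gadget $w_k$, the polynomial size and logspace bounds, and the correctness argument via the Chinese remainder theorem) are routine and run exactly parallel to the free-group case.
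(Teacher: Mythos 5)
Your proposal is correct and follows essentially the same route as the paper's proof: a logspace reduction from CNF unsatisfiability that turns the formula into a Barrington-style $G$-program (using non-solvability of $G$), encodes assignments by residues modulo the first $n$ primes via the Chinese remainder theorem, and realises each instruction by a power word of $\Z$-shift zero whose $G$-values along $[0,M-1]$ select $\sigma_k$ or $\tau_k$ according to divisibility by $p_{i_k}$. The only difference is cosmetic: your gadget makes two passes (writing $\sigma_k\tau_k^{-1}$ at multiples of $p_{i_k}$ and then $\tau_k$ everywhere), whereas the paper uses the single interleaved pattern $(h_i(bg_i)^{p_{k_i}-1}b)^{M/p_{k_i}}b^{-M}$; both yield the same element of $G\wr\Z$.
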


\ifFullVersion
\begin{proof}
	Let us consider a wreath product $G \wr \mathbb{Z}$, where $G$ is a finite non-solvable
	group. Then $G$ has a subgroup $H$ such that $H = [H,H]$ (where $[H,H]$ is the commutator subgroup of $H$, i.e.,
	the subgroup generated by all commutators of $H$).
	By replacing $G$ by its subgroup $H$, we can assume that $G$ itself it equal to its commutator subgroup.
	We choose the generating set $G \setminus \{1\}$ for $G$.
	Barrington \cite{Barrington89} proved the following result:
	Let $C$ be a fan-in two boolean circuit of depth $d$ with $n$ input gates $x_1, \ldots, x_n$.
	From $C$ one can compute a sequence of triples  (a so-called $G$-program)
	\[P_{C} = (k_1,g_1,h_1) (k_2,g_2,h_2) \cdots (k_\ell,g_\ell,h_\ell) \in ([1,n] \times G \times G)^*\]
	of length $\ell \leq (4|G|)^d$ such that for every input valuation $v : \{ x_1, \ldots, x_n\} \to \{0,1\}$ the following two
	statements are equivalent:
	\begin{enumerate}[(a)]
		\item $C$ evaluates to $0$ under the input valuation $v$.
		\item $a_1 a_2 \cdots a_\ell = 1$ in $G$, where $a_i = g_i$ if $v(x_{k_i})=0$ and $a_i = h_i$ if $v(x_{k_i})=1$.
	\end{enumerate}

	Let us now take a formula $C$ in conjunctive normal form with variables $x_1, \ldots, x_n$ and $m$ clauses.
	By taking a binary tree of depth $ \mathcal{O}(\log (m+n) )$ we can write $C$ as a boolean
	circuit of depth $d \in \mathcal{O}(\log (m+n))$ with input variables $x_1, \ldots, x_n$.
	Hence, the length of the $G$-program $P_C$ is bounded by  
	$(4|G|)^d \leq (m+n)^{\mathcal{O}(1)}$ (note that $4|G|$ is a constant in our setting). 
	
	From \cite{Barrington89} it is easy to see that on input of the formula $C$ (or an arbitrary circuit of logarithmic depth), the corresponding $G$-program $P_C$ can be computed in logspace. The idea is the same as to show that $\Nc1 \sse \L$: start from the output gate and recursively evaluate the circuit storing only one bit per gate. For every gate the corresponding sequence of commutators is written on the output tape. 
	
	Let $P_C = (k_1,g_1,h_1) (k_2,g_2,h_2) \cdots (k_\ell,g_\ell,h_\ell)$.
	As in the proof for $F_2 \wr \mathbb{Z}$ we compute in logspace the 
	$n$ first primes $p_1, \ldots, p_n$ and $M = \prod_{i=1}^n p_i$ (the latter in binary notation).
	We now compute  for every $1 \leq i \leq m$ the power word (over the wreath product $G\wr \mathbb{Z}$)
	\[
	w_i = (h_i (b g_i)^{p_{k_i}-1} b)^{M/p_{k_i}} b^{-M}
	\]
	and finally compute $w_C = w_1 w_2 \cdots w_\ell$. Recall that $b$ is the generator of $\mathbb{Z}$.
	
	We claim that $w_C = 1$ in $G \wr \mathbb{Z}$ if and only if $C$ is unsatisfiable.
	Let $(f,k)$ be the group element from $G \wr \mathbb{Z}$ represented by the word $w_C$.
	We have $k=0$ and $f(z) = 1$ for all $z \in \mathbb{Z} \setminus [0,M-1]$. 
	Hence, it remains to show that $C$ is unsatisfiable
	if and only if $f(z) = 1$ for all $z \in [0,M-1]$.
	For a number $z \in [0,M-1]$ we define the valuation $v_z : \{ x_1, \ldots, x_n\} \to \{0,1\}$
	by
	\[
	v_z(x_i) = \begin{cases}
	1 &\text{if } z \equiv 0 \mod p_i\\
	0 &\text{if } z \not\equiv 0\mod p_i 
	\end{cases}
	\]
	By the Chinese remainder theorem, for every valuation $v : \{ x_1, \ldots, x_n\} \to \{0,1\}$
	there exists $z \in [0,M-1]$ with $v = v_z$.
	Moreover, from the construction of $w_C$ we get
	$f(z) = a_1 a_2 \cdots a_\ell$ where $a_i = h_i$ if $z \equiv 0 \mod  p_{k_i}$ and 
	$a_i = g_i$ if $z\not\equiv 0 \mod p_{k_i}$. In other words:
	$f(z) = a_1 a_2 \cdots a_\ell$ where $a_i = h_i$ if $v_z(x_{k_i}) = 1$ and 
	$a_i = g_i$ if $v_z(x_{k_i}) = 0$. By the equivalence of the above statements (a) and (b)
	we have $f(z) = 1$ if and only if $C$ evaluates to $0$ under the valuation $v_z$. Hence, $C$ is unsatisfiable
	if and only if $f(z) = 1$ for all $z \in [0,M-1]$.
\end{proof}

\else
\begin{proof}[Proof sketch] 
	Barrington \cite{Barrington89} proved the following result:
	Let $C$ be a fan-in two boolean circuit of depth $d$ with $n$ input gates $x_1, \ldots, x_n$.
	From $C$ one can compute a sequence of triples  (a so-called $G$-program)
	$P_{C} = (k_1,g_1,h_1) (k_2,g_2,h_2) \cdots (k_\ell,g_\ell,h_\ell) \in ([1,n] \times G \times G)^*$
	of length $\ell \leq (4|G|)^d$ such that for every input valuation $v : \{ x_1, \ldots, x_n\} \to \{0,1\}$ the following two
	statements are equivalent:
	\begin{enumerate}[(a)]
		\item $C$ evaluates to $0$ under the input valuation $v$.
		\item $a_1 a_2 \cdots a_\ell = 1$ in $G$, where $a_i = g_i$ if $v(x_{k_i})=0$ and $a_i = h_i$ if $v(x_{k_i})=1$.
	\end{enumerate}
This $G$-program is constructed as a sequence of iterated commutators, based on the observation that $[g,h] = 1$ if and only if $g=1$ or $h=1$ (given some reasonable assumptions on $g$ and $h$).
	Every formula $C$ in conjunctive normal form can be written as a circuit of depth $\mathcal{O}(\log |C|)$. Hence
	the $G$-program $p_C$ has length polynomial in $|C|$.
	From \cite{Barrington89} it is easy to see that on input of the formula $C$, the $G$-program $P_C$ can be computed in logspace.
	
	Let $P_C = (k_1,g_1,h_1) \cdots (k_\ell,g_\ell,h_\ell)$ and $x_1, \ldots, x_n$ be the variables in $C$.
	We compute in logspace the $n$ first primes $p_1, \ldots, p_n$ and $M = \prod_{i=1}^n p_i$ (the latter in binary notation).
	Let $b$ denote the generator of $\mathbb{Z}$ in the wreath product $G \wr \Z$.
	We now compute for every $1 \leq i \leq m$ the power word 
$
	w_i = (h_i (b g_i)^{p_{k_i}-1} b)^{M/p_{k_i}} b^{-M}
$
	and set $w_C = w_1 w_2 \cdots w_\ell$. 
	
	We claim that $w_C = 1$ in $G \wr \Z$ if and only if $C$ is unsatisfiable:
	For a number $z \in [0,M-1]$ we define the valuation $v_z : \{ x_1, \ldots, x_n\} \to \{0,1\}$
	by
$	v_z(x_i) = 
	1$ if $z \equiv 0 \mod p_i$ and $v_z(x_i) = 0$ otherwise.
	By the Chinese remainder theorem, for every valuation $v : \{ x_1, \ldots, x_n\} \to \{0,1\}$
	there exists $z \in [0,M-1]$ with $v = v_z$.
	Based on the above statements (a) and (b), the final step of the proof checks that 
	$f(z) = 1$ if and only if $C$ evaluates to $0$ under $v_z$.
%
%
\end{proof}

\fi

\ifFullVersion
\else
\begin{proposition}\label{prop:powWP-free-coNPhard}
	Let $F$ be a finitely generated free group of rank at least two. Then $\PowWP(F \wr \mathbb{Z})$ is \coNP-hard.
\end{proposition}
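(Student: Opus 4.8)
The plan is to give a reduction, computable in logarithmic space, from the complement of \textsc{Cnf-Sat}, which is \coNP-complete. Since $F$ has rank at least two, $F_2$ embeds into $F$, and the free group $F_m$ of any finite rank $m$ embeds into $F_2$ by a standard embedding computable in logspace; applying such an embedding in the base group gives $F_m \wr \Z \hookrightarrow F_2 \wr \Z \hookrightarrow F \wr \Z$. Hence it suffices to prove \coNP-hardness of $\PowWP(F_m \wr \Z)$, where $m$ will be taken to be the number of clauses of the input formula. I write $a_1,\dots,a_m$ for the free basis of $F_m$ and $b$ for the generator of the right factor $\Z$.

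Given an input $C=\bigwedge_{j=1}^m C_j$ over variables $x_1,\dots,x_n$ (w.l.o.g.\ $m=2^\ell$, padding with trivially true clauses), I first compute in logspace the first $n$ primes $p_1,\dots,p_n$ (in unary) and $M=\prod_{i=1}^n p_i$ (in binary). A number $z\in[0,M-1]$ is made to encode the truth assignment $v_z$ defined by $v_z(x_i)=1$ iff $p_i\mid z$; by the Chinese remainder theorem every assignment is some $v_z$. For each literal of $C_j$ I build a short power word over $F_m\wr\Z$ of $\Z$-shift $0$: for a positive occurrence of $x_i$ take $(a_j b^{p_i})^{M/p_i}b^{-M}$, whose lamp function places $a_j$ precisely at the positions of $[0,M-1]$ divisible by $p_i$; for a negative occurrence take $\bigl((ba_j)^{p_i-1}b\bigr)^{M/p_i}b^{-M}$, which places $a_j$ at the positions of $[0,M-1]$ \emph{not} divisible by $p_i$. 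Concatenating these words over all literals of $C_j$ yields a power word $w(C_j)$ representing some $(f_j,0)\in F_m\wr\Z$; since all contributions are powers of the single generator $a_j$ they commute and add up, so $f_j$ is supported on $[0,M-1]$ and $f_j(z)$ equals $a_j$ raised to the number of literals of $C_j$ satisfied by $v_z$, whence $f_j(z)\neq 1$ iff $v_z$ satisfies $C_j$.

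The clauses are then combined by a balanced binary tree of commutators: $W_{0,j}=w(C_j)$, $W_{d,j}=[W_{d-1,2j-1},W_{d-1,2j}]$ for $1\le d\le\ell$, and $W=W_{\ell,1}$. Assigning distinct generators to distinct clauses is the crucial device: by induction on $d$, the lamp function $f_{d,j}$ of $W_{d,j}$ takes values in the free subgroup on $\{a_{(j-1)2^d+1},\dots,a_{j2^d}\}$, and since the two commutator arguments lie in free subgroups on disjoint generating sets, for such $g,h$ one has $[g,h]=1$ in $F_m$ iff $g=1$ or $h=1$. Evaluating the commutator coordinatewise — the standard wreath-product commutator formula applies because all $\Z$-shifts vanish — gives $f_{d,j}(z)=1$ iff $f_{d-1,2j-1}(z)=1$ or $f_{d-1,2j}(z)=1$; inductively $f_{\ell,1}(z)=1$ iff $f_j(z)=1$ for some $j$, i.e.\ iff $v_z$ falsifies some clause. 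As $W$ represents $(f_{\ell,1},0)$, we get $W=_{F_m\wr\Z}1$ iff for every $z\in[0,M-1]$ some clause is falsified by $v_z$, i.e.\ iff $C$ is unsatisfiable.

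Finally I would verify the size bounds so that $W$ is a legitimate instance: every exponent is $\pm M$ or $M/p_i$, of bit-length $O(n\log n)$; every period is one of $a_jb^{p_i}$, $(ba_j)^{p_i-1}b$, $b$, of length $O(n\log n)$; and the number of powers is $O(n)$ per clause and is multiplied by at most $4$ at each of the $\ell=\log m$ commutator levels, hence $O(nm^2)$ in total, and everything is computable in logspace. The hard part is the correctness of the commutator tree: one must carefully track which free generators occur in each $W_{d,j}$ so that the ``disjoint-support'' identity $[g,h]=1\Leftrightarrow g=1\vee h=1$ is applicable, and check that commutators in $F_m\wr\Z$ act on the lamp functions pointwise when the cursor shifts all vanish; the prime coding of assignments via the Chinese remainder theorem and the length estimates are routine.
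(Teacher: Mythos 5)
Your proposal is correct and follows essentially the same route as the paper's proof: the same gadgets $(a_jb^{p_i})^{M/p_i}b^{-M}$ and $((ba_j)^{p_i-1}b)^{M/p_i}b^{-M}$ with prime/CRT encoding of assignments, one free generator per clause, the balanced commutator tree exploiting that elements of subgroups on disjoint parts of the basis commute only if one is trivial, and the same polynomial size and logspace bounds.
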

The proof is almost the same as for \prettyref{prop:powWP-finite-coNPhard}. The difference is that we mimic Robinson's proof that the word problem for $F_2$ is \Nc{1}-hard \cite{Robinson93phd} instead of Barrington's result.
\fi

\section{Further Research}
We conjecture that the method of \prettyref{sec:proof_free} can be generalized to 
right-angled Artin groups (RAAGs~-- also known as graph groups) and hyperbolic groups, and hence that the 
power word problem for a RAAG (resp., hyperbolic group) 
$G$ is \Ac{0}-Turing-reducible to the word problem for $G$.
One may also try to prove transfer results for the power word problem with respect to group theoretical
constructions, e.g., graph products, HNN extensions and amalgamated products over finite subgroups.

For finitely generated linear groups, the power word problem leads to the problem of computing matrix powers
with binary encoded exponents. The complexity of this problem is open;
variants of this problem have been studied in \cite{AllenderBD14,GalbyOW15}. 

Another open question is what happens if we allow nested exponents. 
We conjecture that in the free group for any nesting depth bounded by a constant the problem is still in 
$\AC(\WP(F_2))$. However, for unbounded nesting depth it is not clear what happens: 
we only know that it is in \P since it is a special case of the compressed word problem; but it still could be in 
$\AC(\WP(F_2))$ or it could be \P-complete or somewhere in between.



\newcommand{\Ju}{Ju}\newcommand{\Ph}{Ph}\newcommand{\Th}{Th}\newcommand{\Ch}{Ch}\newcommand{\Yu}{Yu}\newcommand{\Zh}{Zh}\newcommand{\St}{St}\newcommand{\curlybraces}[1]{\{#1\}}

\end{document}